\newcommand{\vol}{\mathrm{Vol}}
\newcommand{\I}{\mathrm{i}}
\newcommand{\ms}{\mathcal{S}}
\newcommand{\tf}{\tilde{f}}
\newtheorem{thm}{Theorem}[section]
\newtheorem{lmm}[thm]{Lemma}
\newtheorem{cor}[thm]{Corollary}
\newtheorem{prop}[thm]{Proposition}
\newtheorem{defn}[thm]{Definition}
\theoremstyle{definition}
\newcommand{\ee}{\mathbb{E}}
\newcommand{\hess}{\operatorname{Hess}}
\newcommand{\cp}{\mathcal{P}}
\newcommand{\pp}{\mathbb{P}}
\newcommand{\rr}{\mathbb{R}}
\newcommand{\smallavg}[1]{\langle #1 \rangle}
\newcommand{\ve}{\varepsilon}
\newcommand{\zz}{\mathbb{Z}}
\newcommand{\fpar}[2]{\frac{\partial #1}{\partial #2}}
\numberwithin{equation}{section}
\newcommand{\eq}[1]{\begin{align*} #1 \end{align*}}
\renewcommand{\bar}{\overline}
\renewcommand{\tilde}{\widetilde}
\begin{document}
\title{Local KPZ behavior under arbitrary scaling limits}
\author{Sourav Chatterjee}

\address{Departments of mathematics and statistics, Stanford University}
\email{souravc@stanford.edu}
\thanks{Research partially supported by NSF grants DMS-1855484 and DMS-2113242}
\thanks{Data availability statement: Data sharing not applicable to this article as no datasets were generated or analysed during the current study.}
\thanks{Conflict of interest statement: The author has no competing interests to declare that are relevant to the content of this article.}
\keywords{KPZ equation, scaling limit, directed polymer}
\subjclass[2020]{6H15, 82C41, 35R60}

\begin{abstract}
One of the main difficulties in proving convergence of discrete models of surface growth to the Kardar--Parisi--Zhang (KPZ) equation in dimensions higher than one is that the correct way to take a scaling limit, so that the limit is nontrivial, is not known in a rigorous sense. To understand KPZ growth without being hindered by this issue, this article introduces a notion of  `local KPZ behavior', which roughly means that the instantaneous growth of the surface at a point decomposes into the sum of a Laplacian term, a gradient squared term, a noise term that behaves like white noise, and a remainder term that is negligible compared to the other three terms and their sum. The main result is that for a general class of surfaces, which contains the model of directed polymers in a random environment as a special case, local KPZ behavior occurs under \emph{arbitrary scaling limits}, in any dimension. 
\end{abstract}

\maketitle

\section{Introduction}
\subsection{The KPZ equation}
The Kardar--Parisi--Zhang (KPZ) equation was introduced in~\cite{kardaretal86} to model the growth of a generic randomly growing surface. If $f(t,x)$ is the height of a $d$-dimensional surface at time $t\in \rr_{\ge0}$ and location $x\in \rr^d$, the KPZ equation prescribes that the evolution of $f$ is governed by the equation 
\begin{align}\label{kpzeq}
\partial_t f = \nu \Delta f + \frac{\lambda}{2}  |\nabla f|^2 + \sqrt{D} \xi,
\end{align}
where $\xi$ is a random field known as space-time white noise, and $\nu$, $\lambda$ and $D$ are the   parameters of the model. Formally, space-time white noise is a distribution-valued centered Gaussian random field, with covariance structure
\[
\ee(\xi(t,x)\xi(t',x')) = \delta(t-t') \delta^{(d)}(x-x'),
\]
where $\delta$ and $\delta^{(d)}$ are the Dirac delta functions on $\rr$ and $\rr^d$, respectively. (See Section \ref{whitesec} for a precise definition of space-time white noise.)

It is difficult to give a rigorous meaning to the KPZ equation, mainly due to the well-known difficulties in defining products of distributions. This problem now has a complete solution in dimension one, using a variety of techniques,  such as the Cole--Hopf solution~\cite{bertinigiacomin97}, regularity structures~\cite{hairer13, hairer14}, paracontrolled distributions~\cite{gubinellietal15, gubinelliperkowski17}, energy solutions~\cite{goncalvesjara12, goncalvesjara14, goncalvesetal15, gubinelliperkowski18}, and renormalization group~\cite{kupiainenmarcozzi17}. Moreover, many one-dimensional discrete processes have been shown to have a KPZ scaling limit, as in~\cite{albertsetal14b, amiretal11, borodincorwin14, borodinetal13, dembotsai16, dotsenko10, prahoferspohn02,  sasamotospohn10, yang20}.  All of this is only a small sample of the enormous literature that has grown around rigorous 1D KPZ. For surveys, see~\cite{corwin16, quastel12, quastelspohn15}. 

There are some recent constructions of distribution-valued solutions of the KPZ equation in dimensions greater than one~\cite{caravennaetal20, chatterjeedunlap20, cometsetal19, cometsetal20,   dunlapetal20,   gu20,  lygkoniszygouras22,magnenunterberger18, coscoetal20}. These solutions are `physically trivial', by being equivalent to solutions of a linear stochastic differential equation, called the stochastic heat equation with additive noise. A `nontrivial' solution of the KPZ equation in $d\ge 2$ has not  yet been constructed, although a promising breakthrough has occurred very recently for the related 2D stochastic heat equation with multiplicative noise, which is formally the `exponential' of 2D KPZ~\cite{caravennaetal21}. A more detailed discussion of all this is in the forthcoming sections. 

A fundamental roadblock in constructing nontrivial solutions of the KPZ equation in $d\ge 2$ is that we do not know {\it how to take scaling limits} of approximate solutions to reach a nontrivial limit. Even in dimension one, there can be many different scaling limits. See, for example, \cite[Section 7]{albertsetal14b} for a discussion of the various ways of taking scaling limits of 1D directed polymers, only one of which has been made fully rigorous. But in many 1D  models, we know at least one way of taking a scaling limit that leads to a nontrivial solution of the KPZ equation. In higher dimensions, the question becomes less tractable. Physicists believe that for 2D models, the celebrated `Family--Vicsek scaling'~\cite{familyvicsek91} is the correct one, and leads to a function-valued, rather than distribution-valued, solution of the 2D KPZ equation. This has been verified in numerical simulations~\cite{kellingodor11, rodriguesetal14, halpin-healy12} for discrete models, but remains out of the reach of rigorous mathematics. (See the end of Subsection \ref{results} for a more detailed discussion.) 


\subsection{Local KPZ growth}\label{localsec}
The goal of this paper is to take a small step towards understanding KPZ in $d\ge 2$ without running into the issue of constructing scaling limits, building on a framework introduced recently in the series of papers~\cite{chatterjee21, chatterjee21b, chatterjeesouganidis21}. (Even in $d=1$, this new framework may be useful in going beyond exactly solvable models; this will appear in forthcoming work with Arka Adhikari.) Since the `correct' way to scale is still mysterious, the following workaround is proposed. Consider a general class of growth models, which contains at least one model of widespread interest. Then show that, {\it irrespective of how we take a scaling limit}, the growth is always {\it locally} like the KPZ equation \eqref{kpzeq}, breaking up as the sum of a Laplacian term, a gradient squared term, a noise term, and a residual term that is negligible compared to the other three terms and their sum. Surprisingly, this turns out to be doable. The details are as follows. 


The first step is to give a precise definition of local KPZ growth. Take any $d\ge 1$. Suppose that we have a collection of random functions $\{f_\ve\}_{\ve >0}$ from $\zz_{\ge 0}\times \zz^d$ into $\rr$. A general `rescaling' of $f_\ve$ is defined as follows.  Let $\alpha(\ve)$, $\beta(\ve)$ and $\gamma(\ve)$ be positive real numbers depending on $\ve$, with $\alpha(\ve)$ and $\beta(\ve)$ tending to zero as $\ve \to 0$. Based on  these coefficients, the rescaled version of $f_\ve$ is the function $f^{(\ve)} : \rr_{>0} \times \rr^d \to \rr$ defined as 
\[
f^{(\ve)}(t,x) := \gamma(\ve) f_\ve(\lceil\alpha(\ve)^{-1} t\rceil, \lceil\beta(\ve)^{-1} x\rceil),
\]
where $\lceil u\rceil$ denotes the smallest integer greater than or equal to $u$ when $u\in \rr$, and denotes the vector $(\lceil u_1\rceil,\ldots, \lceil u_d\rceil )$ when $u=(u_1,\ldots,u_d)\in \rr^d$. Note that this means space and time are rescaled so that successive time points are separated by $\alpha(\ve)$ and neighboring points in space are separated by $\beta(\ve)$. The factor $\gamma(\ve)$ is just a multiplicative factor meant to ensure that the limit of $f^{(\ve)}$ as $\ve \to 0$ (on some appropriate space of functions or distributions) does not blow up to infinity or shrink to zero. This is why we need $\alpha(\ve)$ and $\beta(\ve)$ to tend to zero, but there is no restriction on $\gamma(\ve)$. 

Let $A = \{0,\pm e_1,\ldots, \pm e_d\}$ be the set consisting of the origin and its nearest neighbors in $\zz^d$. Define the `local average' of $f^{(\ve)}$ at a point $(t,x)\in \rr_{>0}\times \rr^d$ as
\[
\bar{f}^{(\ve)}(t,x) := \frac{1}{2d+1}\sum_{a\in A} f^{(\ve)}(t, x+\beta(\ve) a), 
\]
the `approximate time derivative' of $f^{(\ve)}$ as
\[
\tilde{\partial}_t f^{(\ve)}(t,x) := \frac{f^{(\ve)}(t+\alpha(\ve), x) - f^{(\ve)}(t,x)}{\alpha(\ve)},
\]
the `approximate Laplacian' as 
\[
\tilde{\Delta} f^{(\ve)}(t,x) := (2d+1)\biggl(\frac{\bar{f}^{(\ve)}(t,x) - f^{(\ve)}(t,x)}{\beta(\ve)^2}\biggr)
\]
and the `approximate squared gradient'\footnote{In the definition of the approximate squared gradient, one may object that it is more natural to have $f^{(\ve)}(t,x)$ instead of $\bar{f}^{(\ve)}(t,x)$ as the term to be subtracted off. The reason behind choosing $\bar{f}^{(\ve)}(t,x)$ is that if we use $f^{(\ve)}(t,x)$ instead, then we will end up with an extra $(\Delta f)^2$ term in the limiting equation, which is not present in the KPZ equation. The situation has some similarity with the definition of stochastic integral, where slightly different definitions give rise to two completely different equations (It\^{o} and Stratonovich).} as 
\[
|\tilde{\nabla} f^{(\ve)}(t,x)|^2 := \frac{1}{2}\sum_{a\in A} \biggl(\frac{f^{(\ve)}(t, x+\beta(\ve) a) - \bar{f}^{(\ve)}(t,x)}{\beta(\ve)}\biggr)^2.
\]
The above definitions are inspired by the fact that if $\alpha(\ve)\to0$, $\beta(\ve)\to 0$, and $f^{(\ve)}$ converges in some strong sense to a smooth function $f$ as $\ve \to 0$, then the approximate time derivative, the approximate Laplacian, and the approximate squared gradient converge to $\partial_t f$, $\Delta f$ and $|\nabla f|^2$. Of course, we do not expect $f^{(\ve)}$ to converge to a smooth limit in general.

For the definition of local KPZ behavior below, and for use in the rest of the paper, recall the meanings of the $o_P$ and $O_P$ notations. If $\{X_\ve\}_{\ve >0}$ and $\{Y_\ve\}_{\ve >0}$ are collections of random variables (which are allowed to be constants), we say that $X_\ve=o_P(Y_\ve)$ if for any $\delta >0$,
\[
\lim_{\ve \to 0} \pp(|X_\ve | > \delta |Y_\ve|) = 0.
\]
In other words, $X_\ve/Y_\ve \to 0$ in probability as $\ve \to 0$. Similarly, we say that $X_\ve = O_P(Y_\ve)$ if 
\[
\lim_{K\to \infty} \limsup_{\ve \to 0} \pp(|X_\ve| > K|Y_\ve|) = 0.
\] 
In other words, $\{X_\ve/Y_\ve\}_{\ve >0}$ is a tight family of random variables.
\begin{defn}[Local KPZ behavior]\label{kpzdef}
Let all notation be as above. We will say that $f^{(\ve)}$ has `local KPZ behavior' as $\ve \to 0$ if for some strictly positive $\nu(\ve)$, $\lambda(\ve)$, and $D(\ve)$, which can vary arbitrarily with $\ve$, some collection of `noise fields' $\xi^{(\ve)}: \rr_{>0} \times \rr^d \to \rr$,  and some collection of `remainder fields' $R^{(\ve)}: \rr_{>0} \times \rr^d \to \rr$,  we have that for any $(t,x)\in \rr_{>0}\times \rr^d$, 
\begin{align*}
\tilde{\partial}_t f^{(\ve)}(t,x) &= \nu(\ve) \tilde{\Delta} f^{(\ve)}(t,x) +\frac{ \lambda(\ve)}{2} |\tilde{\nabla} f^{(\ve)}(t,x)|^2 \\
&\qquad + \sqrt{D(\ve)}\xi^{(\ve)}(t,x) + R^{(\ve)}(t,x), 
\end{align*}
such that the following conditions hold: 
\begin{enumerate}
\item The noise field $\xi^{(\ve)}$ converges in law to white noise on $\rr_{> 0}\times \rr^d$ as $\ve \to 0$ (see Section \ref{whitesec} for the definition of this convergence).
\item The remainder term $R^{(\ve)}(t,x)$ is $o_P$ of the first three terms on the right and their sum, meaning that  $R^{(\ve)}(t,x) $ divided by any of the first three terms, or by their sum, tends to zero in probability as $\ve \to 0$.
\end{enumerate}
\end{defn}
Just to fully clarify the second condition and remove any scope for confusion, we note that it means that for any fixed $(t,x)$, the quantities
\[
\frac{R^{(\ve)}(t,x)}{\nu(\ve) \tilde{\Delta} f^{(\ve)}(t,x)}, \ \ \frac{R^{(\ve)}(t,x)}{\lambda(\ve)|\tilde{\nabla} f^{(\ve)}(t,x)|^2}, \ \ \frac{R^{(\ve)}(t,x)}{\sqrt{D(\ve)}\xi^{(\ve)}(t,x)}
\]
and 
\[
\frac{R^{(\ve)}(t,x)}{\nu(\ve) \tilde{\Delta} f^{(\ve)}(t,x) +\frac{ 1}{2}\lambda(\ve) |\tilde{\nabla} f^{(\ve)}(t,x)|^2  + \sqrt{D(\ve)}\xi^{(\ve)}(t,x)}
\]
all tend to zero in probability as $\ve \to 0$. 

In our examples, we will have that for fixed $(t,x)$ and $\ve$, the noise term $\xi^{(\ve)}(t,x)$ is independent of the Laplacian and gradient squared terms. But we omit this from the definition of local KPZ behavior, so as to leave open the possibility of other examples where the independence criterion does not hold.

It may seem as if $\nu(\ve)$, $\lambda(\ve)$, and $D(\ve)$ should not be allowed to vary with $\ve$ if we want something analogous to \eqref{kpzeq}. However, this is not true. In the KPZ literature, it is understood that the coefficients in \eqref{kpzeq} can be allowed to vary when taking a scaling limit, and even be allowed to tend to zero or blow up to infinity. This is especially true in dimensions higher than one. For example, the Family--Vicsek scaling for 2D surfaces~\cite{familyvicsek91} requires this (see further discussion in Subsection \ref{results}). The important point is that we want the time derivative to decompose into a linear combination of the Laplacian, the squared gradient, a noise term that behaves like white noise, and a negligible error term. This is captured by our definition of local KPZ growth. 

Having defined the notion of local KPZ growth, we define in the next subsection a class of discrete growth models that will be shown to have local KPZ growth under arbitrary scaling limits. 

\subsection{A class of growing random surfaces}\label{modelsec}
Fix some $d\ge 1$. Recall that we defined $A = \{0,\pm e_1,\ldots, \pm e_d\}$ to be the set consisting of the origin and its nearest neighbors in $\zz^d$. Let $\phi:\rr^A \to \rr$ be a function. Let $\mathbf{z} = \{z_{t,x}\}_{t\in \zz_{>0}, x\in \zz^d}$ be a collection of i.i.d.~random variables, which will be called the `discrete noise field', or simply the `noise field' when there is no scope for confusion with the noise field $\xi^{(\ve)}$ from Definition \ref{kpzdef}. Given $\ve >0$, consider a function $f_\ve: \zz_{\ge 0}\times \zz^d\to \rr$ growing as follows: $f_\ve(0,x)=0$ for all $x$, and for each $t\ge 0$,
\begin{align}\label{growthmodel}
f_\ve(t+1,x) := \phi((f_\ve(t,x+a))_{a\in A}) + \ve z_{t+1,x}. 
\end{align}
Imagine $f_\ve(t,x)$ to be the height of a $d$-dimensional random surface at time $t$ and location $x$. The above recursion says that the height at time $t+1$ is a function of the heights at $x$ and its neighbors at time $t$, plus an independent random fluctuation. Since the function $\phi$ `drives' the growth of $f$, we will sometimes refer to $\phi$ as the `driving function' (as in~\cite{chatterjee21, chatterjee21b}).

Let $1\in \rr^A$ denote the vector of all $1$'s. For $u\in \rr^A$, let $\bar{u}$ denote the average of the coordinates of $u$. For $u,v\in \rr^A$, let us write $u\ge v$ if $u_a \ge v_a$ for each $a\in A$.  We make the following assumptions about $\phi$. 

\begin{itemize}
\item {\it Equivariance under constant shifts.} We assume that for all $u\in \rr^A$ and $c\in \rr$,  $\phi(u + c1) = \phi(u)+c$. Besides being physically natural, this assumption has a long history in the literature on convergence of approximation schemes for partial differential equations, starting with \cite{barlessouganidis91}. It is also part of the framework introduced in~\cite{chatterjee21, chatterjee21b, chatterjeesouganidis21}.
\item {\it Zero at the origin.} We assume that $\phi(0)=0$. There is no loss of generality in this assumption, since equivariance ensures that if $\phi(0)\ne 0$, and we define $\tilde{\phi}(u) := \phi(u)-\phi(0)$, and $\tf_\ve$ is defined using $\tilde{\phi}$ in \eqref{growthmodel}, then $\tf_\ve(t,x) = f_\ve(t,x)-t\phi(0)$ for all $t$ and $x$.
\item {\it Monotonicity.} We assume that $\phi(u)\ge \phi(v)$ whenever $u\ge v$. This assumption, too, is physically natural and has appeared in related prior work~\cite{barlessouganidis91, chatterjee21, chatterjee21b, chatterjeesouganidis21}.
\item {\it Symmetry.} We assume that $\phi(u)$ remains unchanged under any permutation\footnote{One may say that it's more natural to require that $\phi$ is only symmetric in $\{u_a\}_{a\ne 0}$. Indeed, that is true, but it leads to messier notation and more cumbersome statements and proofs without adding to the intellectual content of the results, which is why we will work with the stronger assumption of complete symmetry. It should not be difficult to prove analogous results under a weaker symmetry assumption, or even under no symmetry at all, as was done in \cite{chatterjeesouganidis21} for the deterministic analogue of the setup considered here.} of the coordinates of $u$. This is a strengthening of the assumption of `invariance under lattice symmetries' from \cite{chatterjee21}.
\item {\it Regularity.} We assume that $\phi$ is differentiable everywhere, and twice continuously differentiable in a neighborhood of the origin. As noted in \cite{chatterjee21, chatterjeesouganidis21}, this assumption is needed for convergence to KPZ. In the absence of this assumption, the local growth may resemble some other equation, as in~\cite{chatterjeesouganidis21}. 
\item {\it Nondegeneracy.} We assume that the Hessian matrix of $\phi$ at the origin is nonzero. This assumption is needed to ensure the presence of the gradient squared term in the KPZ limit. If the Hessian at the origin is zero, we may have a different kind of local growth, as in~\cite{chatterjeesouganidis21}. 
\item {\it Strict Edwards--Wilkinson domination.} The Edwards--Wilkinson surface growth model~\cite{edwardswilkinson82} is described by equation \eqref{growthmodel} with $\phi(u)=\bar{u}$. We assume that our surface grows at least as fast as the Edwards--Wilkinson surface, meaning that $\phi(u)\ge \bar{u}$ for all $u$. Moreover, we assume that this domination is strict, in the following sense: If $\{u_n\}_{n\ge 1}$ is a sequence such that $\phi(u_n) - \bar{u}_n \to 0$, then $u_n - \bar{u}_n 1 \to 0$. This is one of the two key assumptions that allow us to deduce local KPZ behavior under arbitrary scaling limits.\footnote{Note that the Edwards--Wilkinson model itself does not satisfy the nondegeneracy condition stated above, nor the condition of strict Edwards--Wilkinson domination. Therefore, it does not fit into our framework. But that is all right, since the KPZ equation is not the scaling limit of this model.}
\end{itemize}
In addition to the above assumption on $\phi$, we also make the following set of assumptions on the noise field (in addition to the fact that it is a field of i.i.d.~random variables).
\begin{itemize}
\item {\it Zero mean.} We assume that the noise variables have zero mean. 
\item {\it Boundedness.} We assume that the noise variables are bounded. That is, there is some constant $B$ such that $|z_{t,x}|\le B$ almost surely. This is the second key assumption that ensures local KPZ growth under arbitrary scaling limits.
\item {\it Absolute continuity.} We assume that the law of the noise variables is absolutely continuous with respect to Lebesgue measure. We will refer to this condition by simply saying that the noise variables are `continuous'.
\end{itemize}
Under the above conditions on the driving function and the noise field, it turns out that the discrete surface $f_\ve$ has local KPZ growth under arbitrary scaling limits. This result is stated in the next subsection. 

\subsection{Results}\label{results}
Let $f_\ve$ be as in the previous subsection, and suppose that all of the stated assumptions on $\phi$ and the noise field are satisfied. Let $\alpha(\ve)$, $\beta(\ve)$ and $\gamma(\ve)$ be positive real numbers depending on $\ve$, such that $\alpha(\ve)$ and $\beta(\ve)$ tend to zero as $\ve \to 0$. As in Subsection \ref{localsec}, define the rescaled function $f^{(\ve)} : \rr_{>0} \times \rr^d \to \rr$ as 
\[
f^{(\ve)}(t,x) := \gamma(\ve) f_\ve(\lceil\alpha(\ve)^{-1} t\rceil, \lceil\beta(\ve)^{-1} x\rceil).
\]
The following theorem shows that $f^{(\ve)}$ has local KPZ behavior under any scaling where $\ve$ is sent to zero as the lattice spacing goes to zero. This is the main result of this paper.
\begin{thm}\label{kpzprob}
Under the assumptions on the driving function $\phi$ and the noise field $\mathbf{z}$ stated in the previous subsection, $f^{(\ve)}$ has local KPZ behavior as $\ve \to 0$, in the sense of Definition \ref{kpzdef}, for any choice of $\alpha(\ve)$, $\beta(\ve)$ and $\gamma(\ve)$ such that $\alpha(\ve)$ and $\beta(\ve)$ tend to zero as $\ve \to 0$. Moreover, the coefficients $\nu(\ve)$, $\lambda(\ve)$ and $D(\ve)$ of Definition \ref{kpzdef} turn out to be the following:
\begin{align*}
\nu(\ve) &= \frac{\beta(\ve)^2}{(2d+1)\alpha(\ve)}, \  \  \lambda(\ve) = \frac{2(q-r)\beta(\ve)^2}{\alpha(\ve)\gamma(\ve)},  \   \ D(\ve) =\frac{\sigma^2 \ve^2\beta(\ve)^{d}\gamma(\ve)^2}{\alpha(\ve)}, 
\end{align*}
where $\sigma^2$ is the variance of the noise variables, $q$ is the value of the diagonal elements of $\hess\phi(0)$ (which are all equal due to the symmetry of $\phi$), and $r$ is the value of the off-diagonal elements of $\hess \phi(0)$.
\end{thm}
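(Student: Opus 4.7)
The plan is to Taylor-expand the driving function $\phi$ around the origin and match the resulting terms against the discrete operators $\tilde\partial_t f^{(\ve)}$, $\tilde\Delta f^{(\ve)}$, and $|\tilde\nabla f^{(\ve)}|^2$. Fix $(t,x)$, write $s:=\lceil\alpha(\ve)^{-1}t\rceil$, $y:=\lceil\beta(\ve)^{-1}x\rceil$, $u:=(f_\ve(s,y+a))_{a\in A}\in\rr^A$, and $\bar u:=\bar f_\ve(s,y)$. Equivariance gives $\phi(u)=\bar u+\phi(u-\bar u 1)$, so only the restriction of $\phi$ to the mean-zero hyperplane contributes nontrivially. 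Differentiating the equivariance relation $\phi(u+c1)=\phi(u)+c$ once and twice at the origin shows $\sum_a\partial_a\phi(0)=1$ and $\hess\phi(0)\cdot 1=0$; combined with complete symmetry this forces $\partial_a\phi(0)=1/(2d+1)$ for every $a\in A$ together with $\hess\phi(0)=(q-r)I+rJ$ subject to $q+2dr=0$, where $J$ is the $|A|\times|A|$ all-ones matrix. Since $J$ annihilates mean-zero vectors, the second-order Taylor expansion collapses to
\eq{
\phi(u)=\bar u+\tfrac{q-r}{2}\sum_{a\in A}(u_a-\bar u)^2+E(u),
}
where the remainder satisfies $E(u)/|u-\bar u 1|^2\to 0$ as $u\to\bar u 1$.

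Substituting this expansion into \eqref{growthmodel}, subtracting $f_\ve(s,y)$, and multiplying by $\gamma(\ve)/\alpha(\ve)$ produces exactly the decomposition required by Definition \ref{kpzdef}, with the coefficients $\nu(\ve),\lambda(\ve),D(\ve)$ of the theorem statement, provided one sets $\xi^{(\ve)}(t,x):=z_{s+1,y}/(\sigma\sqrt{\alpha(\ve)\beta(\ve)^d})$ and $R^{(\ve)}(t,x):=(\gamma(\ve)/\alpha(\ve))E(u)$. The convergence of $\xi^{(\ve)}$ to space-time white noise on $\rr_{>0}\times\rr^d$ is then a routine step-function CLT against smooth test functions: the normalization $1/\sqrt{\alpha(\ve)\beta(\ve)^d}$ is exactly what makes $\var\bigl(\int\psi\,\xi^{(\ve)}\bigr)\to\int\psi^2$, and asymptotic Gaussianity follows from Lindeberg, exploiting the boundedness of the $z_{s,y}$ together with the vanishing cell volume $\alpha(\ve)\beta(\ve)^d\to 0$. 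Independence of $\xi^{(\ve)}(t,x)$ from the Laplacian and gradient-squared terms at the same $(t,x)$ is automatic, since $z_{s+1,y}$ is independent of the noise variables that determined $u$.

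The main obstacle, and the step where strict Edwards--Wilkinson domination and the boundedness of the noise are essential, is showing that $R^{(\ve)}$ is $o_P$ of each of the three retained terms and of their sum. Because $E(u)/|u-\bar u 1|^2\to 0$ as $u\to\bar u 1$, what is really needed is the quantitative local flatness estimate $\max_{a\in A}|f_\ve(s,y+a)-\bar f_\ve(s,y)|=O(\ve)$ in probability, which gives $E(u)=o_P(\ve^2)$; this matches the natural scale of $|\tilde\nabla f^{(\ve)}|^2$ and of $\sqrt{D(\ve)}\xi^{(\ve)}$, so that the corresponding ratios vanish (the absolute continuity of the noise handles the issue that the Laplacian, gradient-squared, and noise denominators, while a.s.~nonzero, could be small). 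The mechanism for the flatness estimate is that the identity $f_\ve(s+1,y)-\bar f_\ve(s,y)=\phi(u)-\bar u+\ve z_{s+1,y}$ combined with $\phi(u)\ge\bar u$ from EW domination and $|\ve z_{s+1,y}|\le\ve B$ forces $\phi(u)-\bar u$ to stay of order $\ve$ on typical trajectories; strict EW domination then upgrades an $O(\ve)$ bound on $\phi(u)-\bar u$ to an $O(\ve)$ bound on $|u-\bar u 1|$. I would formalize this via a monotone coupling at the pre-rescaled lattice level, using monotonicity of $\phi$ to transfer one-step $\ve$-scale control into a uniform-in-$(s,y)$ stochastic bound that persists for all $s\ge 0$ — this is the delicate part of the argument, since nothing in the hypotheses alone prevents large slopes from building up over long horizons without the combined EW flattening and bounded forcing.
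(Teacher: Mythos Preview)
Your Taylor-expansion outline, the computation of $\nabla\phi(0)$ and $\hess\phi(0)$, the identification of the coefficients $\nu(\ve),\lambda(\ve),D(\ve)$, and the white-noise CLT are all correct and match the paper. The gap is precisely where you flag it yourself: the uniform-in-time local flatness estimate.

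Two concrete problems. First, a quantitative slip: strict Edwards--Wilkinson domination, as assumed, gives no rate at all --- only that $\phi(u_n)-\bar u_n\to 0$ forces $u_n-\bar u_n1\to 0$. To get a rate one must prove a quadratic lower bound $\phi(u)-\bar u\ge c|u-\bar u1|^2$ near the origin (this uses nondegeneracy of $\hess\phi(0)$ on the mean-zero hyperplane, and is Lemma~\ref{philower} in the paper). With that in hand, $\phi(u)-\bar u=O_P(\ve)$ yields $|u-\bar u1|=O_P(\sqrt\ve)$, not $O_P(\ve)$; consequently $E(u)=o_P(\ve)$, not $o_P(\ve^2)$. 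Fortunately $o_P(\ve)$ is exactly what is needed, since the noise, Laplacian, and sum terms all have reciprocals that are $O_P(\ve^{-1})$ (via absolute continuity, as you note), and $E(u)=o_P(B_\ve)$ handles the gradient-squared ratio directly.

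Second, and more seriously, your proposed mechanism (``monotone coupling'') for bounding $\ee(\phi(u)-\bar u)$ uniformly in $s$ is not spelled out, and I do not see how monotonicity alone closes it. The paper's argument is quite different: it uses a random-walk representation to show $\sum_{y}|\partial_{z_{1,y}}f_\ve(t,x)|=\ve$, hence zeroing out the bottom layer $\{z_{1,y}\}$ changes $f_\ve(t,x)$ by at most $B\ve$. But the zeroed-out process $g_\ve$ satisfies $g_\ve(t+1,\cdot)\stackrel{d}{=}f_\ve(t,\cdot)$ (it is the same recursion started one step later). Therefore $\ee(f_\ve(t+1,x)-f_\ve(t,x))=\ee(f_\ve(t+1,x)-g_\ve(t+1,x))\le B\ve$, and by translation invariance in $x$, $\ee(\phi(u)-\bar u)=\ee(f_\ve(s+1,y)-\bar f_\ve(s,y))=\ee(f_\ve(s+1,y)-f_\ve(s,y))\le B\ve$. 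Markov's inequality (using $\phi(u)-\bar u\ge 0$) then gives $\phi(u)-\bar u=O_P(\ve)$ uniformly in $s,y$. This ``remove the first layer and match laws'' trick is the missing idea in your sketch.
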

To appreciate the meaning of Theorem \ref{kpzprob}, consider the following. It is not hard to guess that local KPZ behavior must be a consequence of Taylor expansion. But to invoke Taylor expansion, we need that for two neighboring points $x$ and $y$, $f_\ve(t,x)\approx f_\ve(t,y)$.  This is trivially true for $t=0$, since $f_\ve(0,x)=0$ for all $x$. Using~\eqref{growthmodel}, one can then deduce by a crude inductive argument that this continues to hold as long as $t$ does not exceed a threshold determined by $\ve$. But for Theorem~\ref{kpzprob} to be true, we need that $f_\ve(t,x)$ and $f_\ve(t,y)$ continue to be close to each other for neighboring $x$ and $y$ {\it even if $t$ is allowed to vary arbitrarily as $\ve \to 0$.} This is encapsulated by the following result, which is the key step in proving Theorem \ref{kpzprob}. Recall the $O_P$ notation defined above Definition \ref{kpzdef}.
\begin{thm}\label{oepthm}
Let $t_\ve \in \zz_{\ge 1}$ and $x_\ve\in \zz^d$ vary arbitrarily with $\ve$. Then, as $\ve \to 0$, $f_\ve(t_\ve, x_\ve) - f_\ve(t_\ve,x_\ve + a) = O_P(\sqrt{\ve})$ for each $a\in A$. 
\end{thm}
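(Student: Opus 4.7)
Fix $a\in A$ and set $h_\ve(t,x):=f_\ve(t,x+a)-f_\ve(t,x)$. Because the noise variables are i.i.d., $\phi$ is translation-equivariant, and the initial condition is flat, the law of $f_\ve(t,\cdot)$ is translation-invariant on $\zz^d$; hence $\ee[h_\ve(t,x)]=0$ and $V_\ve(t):=\ee[h_\ve(t,x)^2]$ does not depend on $x$. By Chebyshev's inequality, the $O_P(\sqrt\ve)$ conclusion reduces to establishing the uniform $L^2$ bound $V_\ve(t)\le C\ve$ for some constant $C$, all $t\ge 0$, and all sufficiently small $\ve$.

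Squaring one step of the recursion and using the independence of $z_{t+1,\cdot}$ from $\mathcal{F}_t$,
\begin{align*}
V_\ve(t+1)=\ee\bigl[(\phi(u_t(x+a))-\phi(u_t(x)))^2\bigr]+2\sigma^2\ve^2,
\end{align*}
with $u_t(y):=(f_\ve(t,y+b))_{b\in A}$. Equivariance lets me factor $\phi(u)=\bar u+\Phi(u-\bar u\mathbf 1)$ where $\Phi\ge 0$ (Edwards--Wilkinson domination), $\Phi(0)=0$, and the $C^2$-regularity with a symmetry-constrained Hessian (diagonal $q$, off-diagonal $r$, with $q+2dr=0$ forced by equivariance and $q-r>0$ by nondegeneracy and strict domination) gives $\Phi(v)=\tfrac{q-r}{2}\|v\|^2+o(\|v\|^2)$ on $v\perp\mathbf 1$. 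The linear part of $\phi(u_t(x+a))-\phi(u_t(x))$ reduces, via the symmetry of $\nabla\phi(0)$, to the neighborhood average $\bar h_\ve(t,x)$, while the nonlinear part is $O(\|\tilde u_t\|^2)$ with $\tilde u_t(y):=u_t(y)-\bar u_t(y)\mathbf 1$.

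The recursion closes by combining two sources of dissipation: Jensen's inequality on the averaging ($\ee[\bar h_\ve(t,x)^2]\le V_\ve(t)$, with a gap controlled by the spatial decorrelation of the tilt field), and the coercive bound $\Phi(v)\ge c\|v\|^2$ on bounded sets, which is the quantitative form of the strict Edwards--Wilkinson hypothesis. Boundedness of the noise is crucial at this point: it supplies the pointwise a-priori control on single-step perturbations needed to keep $\tilde u_t(x)$ inside the region where the local quadratic lower bound for $\Phi$ is valid. Together these yield a strict contraction $V_\ve(t+1)\le(1-\kappa)V_\ve(t)+C\ve^2$ with some $\kappa>0$ independent of $\ve$ and $t$; iterating gives $V_\ve(t)\le C\ve^2/\kappa$, which is more than enough. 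The main obstacle, and technical heart of the argument, is extracting the quantitative, uniform-in-time $\kappa>0$: the $\ell^\infty$-1-Lipschitz property of $\phi$ on its own gives no contraction, and the coercivity of $\Phi$ is merely local in $\rr^A$, so promoting the qualitative sequential rigidity of strict EW domination into a uniform dissipation rate requires a careful bootstrap that simultaneously controls the pointwise roughness of $f_\ve$ (via bounded noise) and the $L^2$-energy $V_\ve$.
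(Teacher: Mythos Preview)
Your proposal has a genuine gap at exactly the point you yourself flag. The inequality $V_\ve(t+1)\le(1-\kappa)V_\ve(t)+C\ve^2$ with a uniform $\kappa>0$ is asserted but not proved, and the two ingredients you cite do not combine to produce it. Jensen gives $\ee[\bar h_\ve(t,x)^2]\le V_\ve(t)$ with equality when $h_\ve(t,\cdot)$ is spatially constant; getting a \emph{strict} gap requires quantitative spatial decorrelation of the tilt field, which you have no independent handle on. The coercivity $\Phi(v)\ge c\|v\|^2$ is a lower bound on $\Phi$ itself, whereas the recursion for $V_\ve$ involves the \emph{difference} $\Phi(\tilde u_t(x+a))-\Phi(\tilde u_t(x))$, and there is no mechanism by which a lower bound on $\Phi$ turns that difference into a dissipative term in an $L^2$ energy identity. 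Your closing paragraph essentially concedes that the contraction is the whole difficulty and that it ``requires a careful bootstrap'' you have not supplied; as written, the argument is circular.

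The paper's proof avoids contraction entirely, via a time-shift coupling. A random walk representation gives $\sum_{y}\bigl|\partial f_\ve(t,x)/\partial z_{s,y}\bigr|=\ve$ for every fixed $s$; hence replacing the time-$1$ noise layer by zero perturbs $f_\ve(t,x)$ by at most $B\ve$. But the modified surface has the same law as $f_\ve(t-1,\cdot)$, so $\ee[f_\ve(t,x)-f_\ve(t-1,x)]\le B\ve$ \emph{uniformly in $t$}. Translation invariance in $x$ and $\ee[z]=0$ turn this into $\ee[\Phi(\tilde u_t(x))]\le B\ve$. Now the local coercivity $\Phi(v)\ge c\|v\|^2$ (derived from strict Edwards--Wilkinson domination) together with Markov's inequality yields $\|\tilde u_t\|^2=O_P(\ve)$, and the nearest-neighbor difference is $O_P(\sqrt\ve)$. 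Note that the paper only obtains $O_P(\sqrt\ve)$, not the $O_P(\ve)$ your contraction would deliver; the key idea you are missing is to bound the \emph{expected time increment} first and then transfer it to space via $\Phi\ge 0$, rather than tracking the spatial increment directly through the dynamics.
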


We will see later that the proof of Theorem \ref{oepthm} does not provide much intuition for why the result is true. An intuitive  explanation is indicated by the next result. For a function $f:\zz^d \to \rr$, let $\delta_i f(x) := f(x+e_i)-f(x)$ for $i=1,\ldots,d$. Let $\delta f := (\delta_1 f,\ldots, \delta_d f)$ be the `gradient' of $f$. Heuristically, it is possible that for fixed $\ve$, the `gradient field' $\delta f_\ve(t,\cdot)$ converges in law to a stationary process as $t\to \infty$. It is not clear whether this is true, but the following result gives strong evidence in favor, under one extra assumption.
\begin{thm}\label{tightthm}
Suppose that in addition to the hypotheses of Theorem \ref{kpzprob}, the following holds: Whenever $|u_n - \bar{u}_n1|\to \infty$, we have $\phi(u_n)-\bar{u}_n \to \infty$. Then for any fixed $\ve >0$, the sequence of gradient fields $\{\delta f_\ve(t,\cdot)\}_{t\in \zz_{\ge0}}$ is a time-homogeneous Markov chain on the state space $(\rr^d)^{\zz^d}$ (endowed with the product topology, which makes it a Polish space), with at least one translation-invariant stationary probability distribution. Moreover, it is a tight family. 
\end{thm}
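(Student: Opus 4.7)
The plan is threefold: (i) derive the Markov and translation-invariance statements directly from the equivariance of $\phi$; (ii) prove tightness of the gradient chain using the new coercivity hypothesis via a Foster-type drift estimate on a torus approximation; (iii) extract a translation-invariant stationary distribution from tightness by the standard Krylov--Bogolyubov construction.

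\emph{Markov structure and translation invariance.} Shift-equivariance of $\phi$ makes $\phi((f_\ve(t,x+e_i+a))_{a\in A})-\phi((f_\ve(t,x+a))_{a\in A})$ invariant under subtracting a common constant from all heights, hence a measurable function $G_i(\delta f_\ve(t,\cdot);x)$ of the gradients in a bounded neighborhood of $\{x,x+e_i\}$, translation-equivariant in $x$. Subtracting \eqref{growthmodel} at $x$ from the equation at $x+e_i$ gives
\[
\delta_i f_\ve(t+1,x)=G_i(\delta f_\ve(t,\cdot);x)+\ve(z_{t+1,x+e_i}-z_{t+1,x}),
\]
and independence of $\{z_{t+1,\cdot}\}$ from the past makes $\{\delta f_\ve(t,\cdot)\}_t$ a time-homogeneous Markov chain on the Polish space $(\rr^d)^{\zz^d}$. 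The initial gradient field is identically $0$, hence translation invariant, and translation-equivariance of the transition propagates this invariance to every $t$.

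\emph{Tightness.} In the product topology on $(\rr^d)^{\zz^d}$, tightness reduces to tightness of each finite-dimensional marginal; combined with translation invariance, it suffices to show that $\{\mathrm{Law}(\delta_i f_\ve(t,0))\}_t$ is tight in $\rr$. Set $\psi(u):=\phi(u)-\bar u$. Strict Edwards--Wilkinson domination gives $\psi\ge 0$ with $\psi(u)=0$ iff $u$ is constant; the new hypothesis gives $\psi(u_n)\to\infty$ whenever $|u_n-\bar u_n 1|\to\infty$; and monotonicity together with shift-equivariance yield $|\phi(u)-\phi(v)|\le|u-v|_\infty$. Consequently $\psi$ is continuous and coercive on the quotient $\rr^A/\rr 1$, and tightness of $\delta_i f_\ve(t,0)$ follows once the scalar random variables $\psi(u(t,0))$ are shown to be bounded in probability uniformly in $t$, where $u(t,x):=(f_\ve(t,x+a))_{a\in A}$. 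To accomplish this I would pass through a torus approximation $T_N=(\zz/N\zz)^d$, where the analogous gradient chain lives on a finite-dimensional state space; choose a Lyapunov function $V_N(g):=\sum_{x\in T_N}\Psi(g(x))$ with $\Psi$ convex and coercive on $\rr^d$, comparable to a power of $\psi$, and verify a drift estimate
\[
\ee\bigl[V_N(\delta f_\ve^N(t+1))\bigm|\delta f_\ve^N(t)\bigr]\le (1-c)\,V_N(\delta f_\ve^N(t))+K|T_N|
\]
with $c>0$ and $K$ independent of $N$: the restoring force $-cV_N$ comes from $\psi\ge 0$ being substantial precisely when local gradients are large, while the $K|T_N|$ term absorbs the $O(\ve^2)$ noise input per edge. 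Iterating and dividing by $|T_N|$ via torus translation invariance gives $\ee[\Psi(\delta f_\ve^N(t,0))]\le K/c$ uniformly in $t,N$; coupling the torus and infinite-volume dynamics through the same noise (so that they coincide on a finite window until the window's dependency cone reaches the boundary) transfers the uniform bound, and hence the tightness, to $\zz^d$.

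\emph{Stationary distribution and main obstacle.} Given tightness, the Cesàro averages $\mu_T:=T^{-1}\sum_{t=0}^{T-1}\mathrm{Law}(\delta f_\ve(t,\cdot))$ are tight and translation invariant on $(\rr^d)^{\zz^d}$; any weak subsequential limit is stationary for the Markov chain by the standard Krylov--Bogolyubov argument and remains translation invariant by weak convergence under the fixed spatial shifts. The main technical obstacle is the drift estimate itself: turning the purely qualitative coercivity of $\psi$ into quantitative contraction constants $c>0$ and $K$ that dominate the $O(\ve^2)$ per-step noise and are robust under $N\to\infty$ will require careful use of the strict Edwards--Wilkinson domination and the symmetry of $\phi$ to rule out gradient directions along which the drift could degenerate in the thermodynamic limit.
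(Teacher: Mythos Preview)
Your Markov-property argument and the Krylov--Bogolyubov extraction of a translation-invariant stationary measure are fine and match the paper. The divergence is in the tightness step, and there you have both overcomplicated the problem and left a real gap.

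You correctly reduce tightness to showing that $\psi(u(t,0))=\phi((f_\ve(t,a))_{a\in A})-\bar f_\ve(t,0)$ is bounded in probability uniformly in $t$. But you then propose a torus approximation with a Foster--Lyapunov drift inequality, and you yourself flag the drift estimate as unproven. Coercivity of $\psi$ tells you $\psi$ is large when gradients are large; it does not by itself produce a contraction factor $c>0$ in $\ee[V_N(t+1)\mid\cdot]\le(1-c)V_N(t)+K|T_N|$. Nothing in the hypotheses gives you a quantitative spectral gap of this kind, and there is no obvious mechanism to rule out slow relaxation of $V_N$ even on the torus. So as written, the tightness argument is incomplete.

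The paper bypasses all of this with a one-line first-moment bound. The key input, established earlier via a random walk representation of $\partial f_\ve(t,x)/\partial z_{s,y}$, is that replacing the time-$1$ noise by zero changes $f_\ve(t,x)$ by at most $B\ve$; coupling then gives $\ee[f_\ve(t+1,x)-f_\ve(t,x)]\le B\ve$. Since $\ee f_\ve(t,x)$ is independent of $x$ and the noise has mean zero, this is exactly $\ee[\psi(u(t,0))]\le B\ve$, uniformly in $t$. Markov's inequality plus the coercivity hypothesis (for every $K$ there is $L$ with $|u-\bar u1|>L\Rightarrow\psi(u)>K$) then gives $\pp(|u(t,0)-\bar u(t,0)1|>L)\le B\ve/K$ uniformly in $t$, which is the tightness of $\delta_i f_\ve(t,0)$. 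No torus, no Lyapunov function, no drift constants to tune.
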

In this context, it is worth noting that there has been an enormous amount of effort in the last two years on understanding stationary KPZ growth. Stationary solutions of the stochastic Burgers equation --- which is supposed to be the equation for the gradient field of the solution of the KPZ equation --- have been constructed in dimension one~\cite{bakhtinli19, dunlapetal21} and also in dimensions two and three~\cite{dunlap20}. Stationary solutions of the 1D `open KPZ' equation were recently constructed in~\cite{corwinknizel21} and further developed and analyzed in~\cite{bryckuznetsov21, brycetal21, barraquandledoussal21}. Convergence to stationarity has been studied in~\cite{pimentel18, pimentel21a, pimentel21b, yang21}. It would be interesting if similar results can be proved in the general setting of Theorem \ref{tightthm}.

Incidentally, the reason why the existence results in \cite{bakhtinli19, dunlapetal21, dunlap20} are restricted to $d\le 3$, while Theorem \ref{tightthm} holds in any dimension, is that the discreteness of both space and time in Theorem \ref{tightthm} makes it easy to overcome the problems of ill-posedness inherent in continuous-time differential equations. Although the white noise is smoothed in space in~ \cite{bakhtinli19, dunlapetal21, dunlap20}, which makes the space effectively discrete, it remains unsmoothed in time, giving rise to genuine technical limitations. This is the same reason why $\ve$ can be arbitrary in Theorem \ref{tightthm}, while it needs to be small enough for the results of  \cite{bakhtinli19, dunlapetal21, dunlap20}.

Another large number of recent results related to the above theorems are about local behaviors of solutions of the 1D KPZ equation and related processes, such as the Airy sheet, the KPZ line ensemble, the Brownian landscape, and the KPZ fixed point~\cite{basuetal21, dauvergneetal20, corwinhammond16, dasghosal21, sarkarvirag21, pimentel18, pimentel21a, pimentel21b}. These results contain much more information than Theorem \ref{kpzprob}, but for one-dimensional processes. Again, it would be interesting if some analogous refined results can be proved in the general setting considered above. 

Theorem \ref{kpzprob} can be viewed as a KPZ universality result. Roughly speaking, KPZ universality is the notion that the KPZ equation arises as the scaling limit of a large and varied class of growing random surfaces for which exact formulas are not available. Significant progress on 1D KPZ universality has been made in recent years~\cite{hairerquastel18, gubinelliperkowski18b, hairerxu19, yang20b, dembotsai16, yang21, yang20c}, although much remains to be understood. In dimensions higher than one, almost nothing is known. Theorem~\ref{kpzprob} is a small step towards understanding the universal nature of KPZ growth in general dimensions in the absence of integrability. 


As a final remark, suppose that $d=1$, and we want the coefficients $\nu(\ve)$, $\lambda(\ve)$ and $D(\ve)$ to {\it not} depend on $\ve$. Then by the formulas from Theorem~\ref{kpzprob}, we need that $\beta(\ve)^2 \propto \alpha(\ve)$, $\beta(\ve)^2\propto \alpha(\ve)\gamma(\ve)$, and $\alpha(\ve) \propto \ve^2 \beta(\ve)\gamma(\ve)$, where the proportionality constants may depend on $d$ and the law of the noise variables. The first two conditions show that $\gamma(\ve)$ must be a constant, and then plugging this into the third condition and using the first condition again, we get $\alpha(\ve)\propto \ve^4$. Then using the first condition one final time, we have $\beta(\ve)\propto \ve^2$. So, when $d=1$, the only way to ensure that $\nu(\ve)$, $\lambda(\ve)$ and $D(\ve)$ do not vary with $\ve$ is to have $\alpha(\ve)\propto \ve^4$, $\beta(\ve)\propto\ve^2$ and $\gamma(\ve)=$ constant. We will see later that for directed polymers in random environment, this gives the `intermediate disorder' scaling limit constructed in~\cite{albertsetal14b}. In forthcoming work with Arka Adhikari, it will be shown that a class of 1D surfaces (of the type considered in this paper) converge in law to this universal scaling limit (known as the Cole--Hopf solution of the 1D KPZ equation) under the above scaling of space and time.

Intriguingly, for $d\ge 2$, the same logic shows that there is no way to get constant coefficients as $\ve \to 0$, if we insist on $\alpha \to 0$ and $\beta \to 0$ as $\ve \to 0$. This suggests that at least one of the coefficients $\nu$, $\lambda$ and $D$ must tend to zero as $\ve \to 0$ for a KPZ scaling limit in $d\ge 2$. Indeed, numerical simulations (such as in~\cite{kellingodor11, rodriguesetal14, halpin-healy12}) suggest that for $d=2$, it may be possible to obtain a function-valued scaling limit by taking (in what is known in physics as the Family–Vicsek scaling~\cite{familyvicsek91}) $\nu \sim \beta^{2-z}$, $\lambda \sim \beta^{2-z-a}$, and $D \sim \beta^{2+2a-z}$ for certain exponents $a$ and $z$. Scaling arguments based on Galilean invariance~\cite{barabasistanley95} suggest that these exponents should satisfy $a + z = 2$. If we assume this, then we obtain the scaling $\nu \sim \beta^a$, $\lambda \sim 1$, and $D \sim  \beta^{3a}$. Theorem \ref{kpzprob} has something interesting to say here: Let $d=2$. Suppose we take some $\beta = \beta(\ve)\to 0$ as $\ve \to 0$, and let $\alpha= \beta^z$ for some exponent $z$. Suppose that in this setting, we want to have $\lambda \sim 1$ in Theorem \ref{kpzprob}. Then the formula for $\lambda$ implies that $\gamma\sim \beta^{2-z}$. Plugging this into the formulas for $\nu$ and $D$, we get $\nu \sim \beta^{2-z}$ and $D\sim \ve^2 \beta^{3(2-z)}$, exactly matching the Family--Vicsek scaling except for the $\ve^2$. However, this may not be an issue, since $\ve$ is often taken to scale like $|\log \beta|^{-1/2}$ in 2D (as in \cite{chatterjeedunlap20, caravennaetal17, caravennaetal20, caravennaetal21}), and therefore has no role to play in the exponents. Thus, it is possible that Theorem \ref{kpzprob} may provide a launchpad to an eventual rigorous proof of the Family--Vicsek scaling relation.

In this context, it should also be noted that the numerical works are exclusively for discrete models. There does not seem to be any numerical work for the continuum KPZ equation, although there is a considerable body of theoretical physics results (see, e.g.,~\cite{canetetal10}).



\subsection{Application to directed polymers}
Fix some $d\ge 1$, and let 
\begin{align}\label{polyphi}
\phi(u) :=  \log \biggl(\frac{1}{2d+1}\sum_{a\in A} e^{u_a}\biggr),
\end{align}
where $A = \{0,\pm e_1,\ldots,\pm e_d\}$, as before. It is straightforward to verify that $\phi$ is equivariant under constant shifts, zero at the origin, monotone, symmetric, twice continuously differentiable, and has a nonzero Hessian matrix at the origin. Moreover, its Hessian matrix is positive semidefinite everywhere, which shows that $\phi$ is convex. Thus, the following lemma shows that $\phi$ strictly dominates Edwards--Wilkinson growth.
\begin{lmm}\label{convlmm}
If a driving function $\phi:\rr^A \to \rr$ is equivariant under constant shifts, zero at the origin, monotone, symmetric, $C^2$ in a neighborhood of the origin, has a nonzero Hessian matrix at the origin, and is also convex, then $\phi$ satisfies the strict Edwards--Wilkinson domination condition. Moreover, it satisfies the additional condition of Theorem \ref{tightthm}.
\end{lmm}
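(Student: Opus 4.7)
First I would extract the structure of $H := \hess\phi(0)$. Symmetry of $\phi$ forces $H_{aa} = q$ and $H_{ab} = r$ for $a\ne b$, so $H = rJ + (q-r)I$ with $J$ the all-ones matrix. Differentiating the equivariance identity $\phi(u+c 1) = \phi(u)+c$ twice in $c$ at the origin gives $1^T H 1 = 0$, i.e.\ $q + (|A|-1)r = 0$. Convexity forces $H$ to be positive semidefinite, with eigenvalue $0$ on $\Span(1)$ and eigenvalue $q-r = -|A|r$ on $1^\perp$. Nondegeneracy rules out $r=0$, so $q-r>0$ strictly.

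Plain Edwards--Wilkinson domination follows by a symmetrization argument: by symmetry of $\phi$,
\[
\phi(u) = \frac{1}{|A|!}\sum_{\pi} \phi(\pi u) \;\geq\; \phi\biggl(\frac{1}{|A|!}\sum_\pi \pi u\biggr) = \phi(\bar u\, 1) = \bar u,
\]
where the inequality is Jensen's (using convexity) and the last equality uses equivariance together with $\phi(0)=0$. For the additional condition of Theorem~\ref{tightthm}, the plan is to show that $\phi(sv)\to\infty$ as $s\to\infty$ uniformly over unit vectors $v \in 1^\perp$. Writing $g_v(s) := \phi(sv)$, I have $g_v(0)=0$, $g_v'(0) = \nabla\phi(0)\cdot v = \bar v = 0$ (equivariance and symmetry force $\nabla\phi(0)=|A|^{-1}1$), and $g_v''(0) = v^T H v = q-r$. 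Continuity of $\hess\phi$ near the origin and compactness of the unit sphere in $1^\perp$ let me pick $\rho>0$ with $g_v''(s)\geq(q-r)/2$ for all such $v$ and all $s\in[0,\rho]$. Then $g_v'(\rho)\geq(q-r)\rho/2$, and convexity of $g_v$ yields $g_v(s) \geq \tfrac{(q-r)\rho}{2}(s-\rho)$ for $s\geq\rho$. Hence $\phi(w_n)\to\infty$ whenever $w_n\in 1^\perp$ and $|w_n|\to\infty$; via $w_n = u_n - \bar u_n\,1$ and equivariance, this is exactly the additional condition of Theorem~\ref{tightthm}.

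For strict Edwards--Wilkinson domination, suppose $\phi(u_n)-\bar u_n \to 0$ and set $w_n := u_n - \bar u_n\, 1 \in 1^\perp$; equivariance gives $\phi(w_n)\to 0$. The growth estimate just proved forces $\{w_n\}$ to be bounded, so it suffices to show every subsequential limit $w$ is zero. Such a $w$ satisfies $\bar w=0$ and $\phi(w)=0$, and $h(s) := \phi(sw)$ is a convex function on $[0,1]$ with $h(0)=h(1)=0$ and $h'(0)=\bar w = 0$, which forces $h\equiv 0$. Taylor-expanding $h$ at $s=0$ (valid by $C^2$-regularity near the origin) yields $w^T H w = 0$; since $H|_{1^\perp} = (q-r)I$ with $q-r>0$, we conclude $w\in\Span(1)$, which combined with $\bar w=0$ forces $w=0$. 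The only slightly delicate point is the uniform-in-direction growth estimate, but it dissolves here because the relevant Hessian eigenvalue $q-r$ is constant on the unit sphere of $1^\perp$.
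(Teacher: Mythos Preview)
Your proof is correct and follows essentially the same strategy as the paper: extract the Hessian structure $H = rJ + (q-r)I$ with $q-r>0$ from symmetry, equivariance, convexity and nondegeneracy; use Taylor expansion near the origin to get a quadratic lower bound on $\phi$ restricted to $1^\perp$; and use convexity to propagate this to a linear lower bound for large arguments.

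The only organizational difference is that the paper packages these two estimates into a single quantitative lemma,
\[
\phi(u) \ge \tfrac{1}{4}(q-r)\,|u|\,\min\{\delta,|u|\}\qquad (u\in 1^\perp),
\]
from which both strict Edwards--Wilkinson domination and the extra condition of Theorem~\ref{tightthm} follow immediately, whereas you prove the linear growth first (establishing the extra condition), then use it to get boundedness of $\{w_n\}$ and finish strict domination via a subsequential-limit argument and $h''(0)=0$. Your route is slightly less direct but entirely valid. Your symmetrization/Jensen proof of plain Edwards--Wilkinson domination is a pleasant alternative to the paper's one-line use of the first-order convexity inequality $\phi(u)\ge \phi(0)+\nabla\phi(0)\cdot u = \bar u$.
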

Let $\mathbf{z} = \{z_{t,x}\}_{t\in \zz_{>0}, x\in \zz^d}$ be a collection of i.i.d.~random variables (called `noise variables' below), and for each $\ve >0$,  let $f_\ve$ be the discrete random surface generated according to~\eqref{growthmodel} with zero initial condition, using the driving function $\phi$ displayed in \eqref{polyphi}. A simple induction shows that 
\begin{align*}
f_\ve(t,x) &= \log \biggl[\frac{1}{(2d+1)^{t-1}}\sum_{P\in \cp_t}\exp\biggl(\ve \sum_{i=0}^{t-1}z_{t-i, x+p_i}\biggr)\biggr],
\end{align*}
where $\cp_t$ is the set of all lazy random walk paths of length $t$ starting at the origin --- that is, the set of all $P= (p_0,\ldots, p_{t-1})\in (\zz^d)^t$ such that $p_0=0$ and $|p_i-p_{i-1}|\le 1$ for each $i$, where $|\cdot|$ is the Euclidean norm. This is the log-partition function of the $(d+1)$-dimensional directed polymer model~\cite{comets17} on lazy random walk paths\footnote{The usual version of the model considers non-lazy random walk paths with $|p_i-p_{i-1}|=1$ for each $i$, but we change it to the lazy version to fit our framework. One can work with the usual version to arrive at a similar result, but that will require a slightly different --- and less pleasant --- definition of local KPZ growth, due to parity issues. Since the essential features of the model can be expected to remain the same for lazy paths, we  work with the lazy version.} of length $t-1$ at inverse temperature $\ve$, in the random environment $\mathbf{z}$. By Lemma~\ref{convlmm}, Theorem~\ref{kpzprob}, Theorem~\ref{tightthm}, and the above observations about $\phi$, we get the following result.
\begin{thm}\label{polythm}
Suppose that the noise variables are continuous, bounded, and have mean zero. Then $f_\ve$ has local KPZ growth under any scaling limit, in the sense of Theorem \ref{kpzprob}. Moreover, the gradient fields $\{\delta f_\ve(t,\cdot)\}_{t\in \zz_{\ge 0}}$ satisfy the conclusions of Theorem~\ref{tightthm}.
\end{thm}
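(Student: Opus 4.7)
The plan is to verify that the log-sum-exp driving function in \eqref{polyphi} satisfies every hypothesis on $\phi$ required by Theorem \ref{kpzprob} and Theorem \ref{tightthm}; then both theorems apply and yield the two conclusions of Theorem \ref{polythm} directly. The noise hypotheses (boundedness, continuity, zero mean) are exactly those assumed in the statement, so no additional work is required on the noise side.

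All of the structural conditions on $\phi$ are immediate from the explicit formula. Equivariance under constant shifts follows from pulling $e^c$ out of the sum and extracting $c$ from the logarithm; $\phi(0)=\log 1=0$; symmetry in the coordinates of $u$ is obvious; monotonicity holds because $u\mapsto e^u$ and $\log$ are both increasing; and $\phi$ is real-analytic, hence $C^2$, on all of $\rr^A$. A one-line computation gives
\[
\partial_a \partial_b \phi(0) = \frac{1}{2d+1}\1_{a=b} - \frac{1}{(2d+1)^2},
\]
so $\hess \phi(0)$ is nonzero, with diagonal entries $q = 2d/(2d+1)^2$ and off-diagonal entries $r = -1/(2d+1)^2$; in particular $q-r = 1/(2d+1)$, which pins down the constant in $\lambda(\ve)$ appearing in Theorem \ref{kpzprob}.

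The two remaining properties --- strict Edwards--Wilkinson domination and the additional growth condition from Theorem \ref{tightthm} --- are exactly what Lemma \ref{convlmm} extracts from convexity of $\phi$, given the other properties above. Convexity is the standard property of log-sum-exp: at a general point $u$, the Hessian of $\phi$ equals $\operatorname{diag}(p) - pp^\top$ with $p_a = e^{u_a}/\sum_b e^{u_b}$, which is the covariance matrix of a categorical distribution on $A$ and hence positive semidefinite. Applying Lemma \ref{convlmm} supplies both missing hypotheses, after which the conclusions follow by invoking Theorem \ref{kpzprob} and Theorem \ref{tightthm}. There is no real obstacle here: all the analytic content has already been packaged into Lemma \ref{convlmm} (which converts convexity into the two required analytic conditions) and into the main theorems, so what remains is pure verification.
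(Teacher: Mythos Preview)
Your proposal is correct and follows essentially the same route as the paper: verify the structural hypotheses on $\phi$ directly from the log-sum-exp formula, observe that $\phi$ is convex (via the positive semidefinite Hessian $\operatorname{diag}(p)-pp^\top$), invoke Lemma~\ref{convlmm} to obtain strict Edwards--Wilkinson domination and the extra growth condition, and then apply Theorems~\ref{kpzprob} and~\ref{tightthm}. Your explicit computation of $q$, $r$, and $q-r=1/(2d+1)$ is a nice addition that the paper does not spell out.
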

Recall that for $d=1$, the only way to get the coefficients of the local KPZ equation in Theorem \ref{kpzprob} to not depend on $\ve$ is to have $\alpha(\ve)\propto \ve^4$, $\beta(\ve)\propto\ve^2$ and $\gamma(\ve)=$ constant. Translating this to polymer language, note that for a fixed $(t,x)$, $f^{(\ve)}(t,x)$ is the log-partition function for polymers of length $\alpha(\ve)^{-1}$ at inverse temperature $\ve$. Thus, $\alpha(\ve)\propto \ve^4$ means that for polymers of length $n$, the inverse temperature needs to be proportional to $n^{-1/4}$. This is the `intermediate disorder regime' considered in \cite{albertsetal14b}. It is interesting that this is the only possible way to scale so that we get constant coefficient in the local KPZ equation of Theorem \ref{kpzprob}. It is also intriguing that for $d\ge 2$, there is no way to scale so that the coefficients in the local KPZ equation do not vary with $n$ while the inverse temperature goes to zero as $n\to \infty$. 

In this context, it is worth noting (as pointed out by one of the referees) that  in the case of the continuous polymer model (Brownian motion paths in a regularized in space white noise environment, as in~\cite{mukherjeeetal16}), it is actually straightforward that under any arbitrary scaling, the rescaled log-partition function is the solution of a regularized KPZ equation which features similar scaling-dependent coefficients and a noise that converges to the white noise. One can see that thanks to the Feynman--Kac and It\^{o} formulas and the Brownian/white noise scaling properties (see, e.g., \cite[Section 2.3]{mukherjeeetal16}, where it is done for the diffusive scaling, but any other scaling also works). 

In $d= 2$, there are very few results about scaling limits of the directed polymer model. A scaling limit for the partition function (rather than the log-partition function considered here) of the $(2+1)$-dimensional model has been obtained in~\cite{caravennaetal17}, and the convergence of polymer paths to Brownian motion in the subcritical regime has been recently proved in~\cite{gabriel21}. There are a number of closely related results about `continuum polymers', which have been used to construct distribution-valued solutions of the KPZ equation. Rough calculations indicate that one might be able to obtain scaling limits of discrete directed polymers using similar arguments. For example, the results from \cite{chatterjeedunlap20, caravennaetal20, gu20} indicate that for $d=2$, a distribution-valued solution of the KPZ equation may be obtained, in the language of this paper, by taking $\alpha(\ve)\propto e^{-C\ve^{-2}}$ for sufficiently large $C$, $\beta(\ve)\propto \sqrt{\alpha(\ve)}$, and $\gamma(\ve)\propto \ve^{-1}$, as $\ve \to 0$.  One might argue, though, that these constructions  are not really solutions of the 2D KPZ equation, because it has been shown that they reduce to solutions to the stochastic heat equation with additive noise. The recent work~\cite{caravennaetal21}, which gives a non-Gaussian construction `at criticality', offers a more promising avenue to the construction of a `true' distribution-valued solution of the 2D KPZ equation. 

For $d\ge 3$, similar rough calculations indicate that the constructions in \cite{chatterjee21b1, magnenunterberger18, dunlapetal20, cometsetal19, cometsetal20, lygkoniszygouras22} correspond to taking the scaling limit of discrete directed polymers {\it keeping the inverse temperature fixed (and small)} while sending the spatial and temporal lattice spacings to zero in a certain way. This approach does not fit into our framework. 

A final remark about phase transitions for 2D polymers, proved in \cite{caravennaetal17}: One may wonder why a phase transition in the temperature parameter, as proved in \cite{caravennaetal17} for the 2D polymer model, does not manifest itself in Theorem \ref{polythm}. The possible reason is that Theorem \ref{polythm} is about the relation between local spatial and temporal derivatives of the height function, and not the height function itself. So, although the height may behave differently in different regimes, the behavior of its infinitesimal growth will exhibit no such transition, according to Theorem \ref{polythm}.


\subsection{Generalized discrete KPZ}\label{gensec}
In this subsection we consider a class of examples where $\phi$ is not necessarily convex, but the condition of strict Edwards--Wilkinson domination holds. These examples are natural discretizations of KPZ-like equations. 

Suppose that $\psi:\rr\to [0,\infty)$ is a function with the following properties: (a) it is $C^1$ everywhere, (b) it is $C^2$ in a neighborhood of the origin, (c) $|\psi'|$ is uniformly bounded, (d) $\psi(0)=0$, (e) $\psi''(0)\ne 0$, (f) $\psi(x)\ne 0$ for all $x\ne 0$, and (g) $\psi(x)$ bounded away from zero as $|x|\to \infty$. An example is:
\begin{align}\label{psiex}
\psi(x) &=
\begin{cases}
x^2 & \text{ if } |x|\le 1,\\
2|x|-1 &\text{ if } |x|>1.
\end{cases}
\end{align}
Let $c$ be a strictly positive real number, and define 
\[
\phi(u) := \bar{u}+ c \sum_{a\in A} \psi(u_a-\bar{u}). 
\]
Let $\mathbf{z} = \{z_{t,x}\}_{t\in \zz_{>0}, x\in \zz^d}$ be a collection of i.i.d.~random  variables, and let $f_\ve$ be defined by \eqref{growthmodel} with zero initial condition,  which can be rewritten as
\begin{align*}
&f_\ve(t+1,x) - f_\ve (t,x) \\
&= \bar{f}_\ve(t,x) - f_\ve(t,x) + c\sum_{a\in A} \psi(f_\ve(t,x+a) - \bar{f}_\ve(t,x)) + \ve z_{t+1,x},
\end{align*}
where $\bar{f}_\ve$ is the local average
\begin{align}\label{localavg}
\bar{f}_\ve(t,x) := \frac{1}{2d+1}\sum_{a\in A} f_\ve(t,x+a). 
\end{align}
In other words, the discrete time derivative of $f_\ve$ equals the sum of the discrete Laplacian, a noise term, and functions of discrete spatial derivatives. We may refer to this as a `generalized discrete KPZ equation'. Choosing $\psi(x)=x^2$ would make it exactly like a discrete KPZ equation, but that $\psi$ does not satisfy the bounded derivative condition required for the result stated below (which prevents $\phi$ from satisfying the monotonicity assumption). The $\psi$ displayed in equation~\eqref{psiex} is a close alternative.  The following result shows that $f_\ve$ has local KPZ growth under arbitrary scaling limits if $c$ is small enough (and the noise variables satisfy the three required conditions).
\begin{thm}\label{genthm}
Let $f_\ve$ be defined as above, with $\psi$ satisfying the listed conditions. Let $|\psi'|_\infty$ denote the supremum norm of $\psi'$. If $c \le (4d|\psi'|_{\infty})^{-1}$, and the noise variables are continuous, bounded, and have mean zero, then $f_\ve$ has local KPZ growth under arbitrary scaling limits, in the sense of Theorem \ref{kpzprob}. Moreover, if $\psi(x)\to \infty$ as $|x|\to \infty$, then the conclusions of Theorem \ref{tightthm} hold.
\end{thm}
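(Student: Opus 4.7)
The plan is to reduce Theorem \ref{genthm} to a direct application of Theorem \ref{kpzprob} (and, for the tightness statement, Theorem \ref{tightthm}) by verifying that the driving function
\[
\phi(u) = \bar{u} + c\sum_{a\in A} \psi(u_a - \bar{u})
\]
satisfies all the structural hypotheses imposed on $\phi$ in Subsection \ref{modelsec}. The noise assumptions are assumed directly in the statement, so there is nothing to check there.

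The easy assumptions are immediate from the form of $\phi$. Equivariance under constant shifts holds because $\bar{u}$ is equivariant and $u_a - \bar{u}$ is invariant under $u\mapsto u + c1$. The value $\phi(0)=0$ follows from $\psi(0)=0$. Symmetry under permutations of coordinates is clear since we sum over all $a\in A$. Since $\psi$ is $C^2$ near the origin and $\bar{u}$ is linear, $\phi$ is $C^2$ near the origin; a short Taylor computation gives
\[
\phi(u) = \bar{u} + \tfrac{1}{2}c\psi''(0)\sum_{a\in A} (u_a - \bar{u})^2 + o(|u-\bar{u}1|^2),
\]
so $\hess\phi(0)$ is a nonzero multiple of the Hessian of the nondegenerate quadratic form $\sum_a(u_a-\bar u)^2$, giving nondegeneracy.

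The main nontrivial step, and where the hypothesis $c\le (4d|\psi'|_\infty)^{-1}$ is used, is monotonicity. I would compute
\[
\frac{\partial \phi}{\partial u_a} = \frac{1}{2d+1} + \frac{2dc}{2d+1}\,\psi'(u_a-\bar{u}) - \frac{c}{2d+1}\sum_{b\ne a}\psi'(u_b-\bar{u}),
\]
and bound the last two terms in absolute value by $2dc|\psi'|_\infty/(2d+1)$ each, yielding
\[
\frac{\partial \phi}{\partial u_a} \ge \frac{1 - 4dc|\psi'|_\infty}{2d+1} \ge 0
\]
under the assumed bound on $c$. For strict Edwards--Wilkinson domination, $\phi(u)-\bar{u} = c\sum_a \psi(u_a-\bar u) \ge 0$ since $\psi\ge 0$, and if $\phi(u_n)-\bar{u}_n\to 0$ then $\psi(u_{n,a}-\bar u_n)\to 0$ for every $a$; by hypotheses (f) and (g), $\psi$ is nonzero off the origin and bounded away from zero at infinity, so $u_{n,a}-\bar u_n\to 0$ for each $a$, i.e.\ $u_n-\bar u_n 1\to 0$. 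Thus all hypotheses of Theorem \ref{kpzprob} hold, and the first conclusion follows.

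For the final statement, suppose additionally that $\psi(x)\to\infty$ as $|x|\to\infty$. If $|u_n - \bar{u}_n 1|\to\infty$, then $\max_{a}|u_{n,a}-\bar u_n|\to\infty$, so (choosing such an $a$ for each $n$) $\psi(u_{n,a}-\bar u_n)\to\infty$; since all terms in the sum defining $\phi(u_n)-\bar u_n$ are nonnegative, $\phi(u_n)-\bar u_n\to\infty$. This is exactly the extra hypothesis of Theorem \ref{tightthm}, and the tightness conclusion then follows from that theorem. I do not foresee any serious obstacle: all the work is in the monotonicity derivative estimate, which is essentially the worst-case bound already encoded in the constant $1/(4d|\psi'|_\infty)$, and in carefully using conditions (f) and (g) to upgrade pointwise vanishing of $\psi(u_{n,a}-\bar u_n)$ to vanishing of $u_{n,a}-\bar u_n$ itself.
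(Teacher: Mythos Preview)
Your proposal is correct and follows essentially the same route as the paper: verify one by one that $\phi$ satisfies the hypotheses of Subsection~\ref{modelsec}, with the only substantive computation being the partial derivative estimate for monotonicity and the use of conditions (f) and (g) for strict Edwards--Wilkinson domination. Your derivative computation is in fact slightly more explicit than the paper's (which just writes the formula and asserts the conclusion), making transparent why the constant $(4d|\psi'|_\infty)^{-1}$ arises.
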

This concludes the statements of results. The rest of the paper is organized as follows. A list of open problems is given in the next subsection. A sketch of the proof of Theorem \ref{kpzprob} is in Section \ref{sketchsec}. Section~\ref{whitesec} contains a discussion of space-time white noise. All proofs  are in Section \ref{kpzproof}.


\subsection{Open questions}
The main open question is to construct nontrivial solutions of the KPZ equation in $d\ge 2$, and then show that discrete processes such as directed polymers converge to these nontrivial solutions under appropriate scaling limits. This is a very hard problem, completely out of the reach of existing technology. Theorem \ref{kpzprob} gives hope that something like this can eventually be proved, because it shows that local KPZ behavior holds for any scaling limit --- and so, once an appropriate scaling is identified, convergence would probably hold, and the challenge would only be to prove nontriviality of the limit. 

Another class of open problems is to understand the stationary probability measures of the gradients fields, which are guaranteed to exist by Theorem \ref{tightthm}. Is the stationary measure unique? If not, what is the set of all stationary measures? What initial conditions lead to which stationary limits? What can be said about rates of convergence? 

The $C^2$ assumption on the driving function $\phi$ is restrictive, but is crucial for the notion of local KPZ universality considered in this paper. Is it possible to have a different formulation that allows driving functions that are not $C^2$? Such driving functions arise in many important models, such as last-passage percolation, ballistic deposition, etc.~(e.g., see~\cite{chatterjee21b, chatterjeesouganidis21}). For the same reason, it would be nice to be able to extend the framework of this paper to {\it asynchronous updates}, where each site is given an independent Poisson clock and the height is updated whenever the clock rings (such as in~\cite{gangulygheissari21}).

Removing the boundedness assumption on the noise variables is also a worthy goal. It is not clear how the proof technique of this paper can be extended to unbounded noise variables without introducing some constraints on how $\alpha(\ve)$, $\beta(\ve)$ and $\gamma(\ve)$ can vary with $\ve$.

Finally, in the setup of Subsection \ref{gensec}, it would be interesting to see if local KPZ behavior under arbitrary scaling limits hold when $\psi(x) = cx^2$ for some constant $c$, which is the `true' discretization of the KPZ equation. One can make $c$  vary with $\ve$ if that helps in reaching a nontrivial scaling limit. 

\section{Sketch of the proof of Theorem \ref{kpzprob}}\label{sketchsec}
First, note that by the equivariance property of $\phi$,
\begin{align*}
f_\ve(t+1,x) - f_\ve(t,x) &= \bar{f}_\ve(t,x) - f_\ve(t,x)  + \phi(q_\ve(t,x)) + \ve z_{t+1,x}, 
\end{align*}
where $\bar{f}_\ve$ is the local average of $f_\ve$ defined in equation \eqref{localavg}, and 
\[
q_\ve(t,x) := (f_\ve(t,x+a) - \bar{f}_\ve(t,x))_{a\in A}.
\] 
Now, if for some $x$ (possibly depending on $\ve$ and the noise variables), $f_\ve(t,x) \approx f_\ve(t,x+a)$ for all $a\in A$, then by Taylor expansion and using the facts that $\phi(0)=0$, $\nabla\phi(0)$ has all coordinates equal (by symmetry), and the $\hess \phi(0)$ has all diagonal elements equal and all off-diagonal elements equal (again, by symmetry), it follows that
\begin{align*}
\phi(q_\ve(t,x)) &= \phi(0) + \nabla \phi(0) \cdot q_\ve(t,x) \\
&\qquad + \frac{1}{2}q_\ve(t,x)\cdot \hess \phi(0)q_\ve(t,x) + \text{a remainder term},
\\
&= K \sum_{a\in A} (f_\ve(t,x+a) - \bar{f}_\ve(t,x))^2 + \text{a remainder term},
\end{align*}
where $K$ is a constant depending on $\phi$, and the remainder term is negligible compared to the first term on the right. Together with the preceding display, this gives
\begin{align}
\underbrace{f_\ve(t+1,x) - f_\ve(t,x)}_{\text{Time derivative}}  &= \underbrace{\bar{f}_\ve(t,x) - f_\ve(t,x)}_{\text{Laplacian term}}\notag\\
&\qquad + K \underbrace{\sum_{a\in A} (f_\ve(t,x+a) - \bar{f}_\ve(t,x))^2}_{\text{Gradient squared term}}\notag \\
&\qquad + \underbrace{\ve z_{t+1,x}}_{\text{Noise}} + \text{ a remainder term},\label{sketch}
\end{align}
which is local KPZ behavior, except that it holds only under the crucial assumption that $f_\ve(t,x+a)\approx f_\ve(t,x)$ for all $a\in A$. (We also need that the remainder term is negligible compared to the Laplacian term, the noise, and the sum of the Laplacian, noise and gradient squared terms, but let us ignore that for the time being.) This condition holds trivially at time $t=0$, since $f_\ve(0,x)=0$ for all $x$. If $\ve$ is small, it continues to hold for $t =1$, and inductively, for all $t$ up to a threshold depending on $\ve$. But to get local KPZ behavior under arbitrary scaling limits, we need to have $f_\ve(t,x+a)\approx f_\ve(t,x)$ for all $a\in A$ even if $t$ and $x$ are allowed to vary arbitrarily as $\ve \to 0$. The argument for this is outlined below. 

The first step is to show, using a random walk representation introduced in \cite{chatterjee21b}, that for any $x\in \zz^d$ and $1\le s\le t$,  
\begin{align*}
\sum_{y\in \zz^d} \biggl|\fpar{}{z_{s,y}}f_\ve (t,x)\biggr| &= \ve. 
\end{align*}
As a straightforward consequence of this identity, it follows that if $z_{1,y}$ is replaced by $0$ for each $y$, then the value of $f_\ve(t,x)$ changes by at most $B\ve$, where $B$ is a constant upper bound on the magnitude of the noise variables. (Here we use the assumption that the noise variables are bounded.) Note that this bound has no dependence on $t$ and $x$.

For each $t$ and $x$, let $g_{\ve}(t,x)$ be the value of $f_\ve(t,x)$ after replacing all $z_{1,y}$ by zero. Note that $g_\ve(1,x) = 0$ for each $x$. Thus, $g_\ve$ is just like $f_\ve$, except that instead of starting with an all zero initial condition at time $0$, we start with an all zero initial condition at time $1$. Thus, $g_\ve(t+1,x)$ has the same law as $f_\ve(t,x)$.  By the conclusion of the previous paragraph, this implies that
\begin{align*}
\ee(f_\ve(t+1,x)-f_\ve(t,x)) &= \ee(f_\ve(t+1,x)-g_\ve(t+1,x)) \le B\ve.
\end{align*}
The above deduction is the first main trick in the proof.\footnote{I thank Alex Dunlap for pointing out that this step bears similarity with  \cite[Proposition 3.1]{dunlap20} and \cite[Proposition 5.2]{dunlapetal21}, where it is used to control the squared gradient of solutions to the KPZ equation, which leads to stationary solutions of the stochastic Burgers equation.} The second trick is the following. Since the law of $f_\ve(t,x)$ is the same for all $x$, the above inequality gives
\begin{align*}
\ee(f_\ve(t+1,x)-\bar{f}_\ve(t,x)) &= \ee(f_\ve(t+1,x)-f_\ve(t,x)) \le B\ve.
\end{align*}
Since the noise variables have mean zero,
\begin{align*}
\ee(f_\ve(t+1,x)) &= \ee(\phi((f_\ve(t,x+a))_{a\in A}) + \ve z_{t+1,x})\\
&= \ee(\phi((f_\ve(t,x+a))_{a\in A})).
\end{align*}
Combining, and using equivariance of $\phi$ under constant shifts, we get 
\[
\ee(\phi(q_\ve(t,x))) \le B\ve,
\]
where recall that $q_\ve(t,x) = (f_\ve(t,x+a) - \bar{f}_\ve(t,x))_{a\in A}$. 
Note that the vector $q_\ve(t,x)$ belongs to the hyperplane $H := \{u\in \rr^A: \bar{u}=0\}$.  By Edwards--Wilkinson domination, $\phi$ is nonnegative everywhere on this hyperplane. Thus, for any $\eta >0$,
\[
\pp(\phi(q_\ve(t,x)) > \eta)\le \frac{\ee(\phi(q_\ve(t,x)))}{\eta} \le \frac{B\ve}{\eta}.
\]
By strict Edwards--Wilkinson domination, $\phi(u_n)\to 0$ implies $u_n\to 0$ on $H$. This is equivalent to saying that for any $\delta>0$, there exists $\eta(\delta)>0$ such that if $u\in H$ and $\phi(u) \le \eta(\delta)$, then $|u|\le \delta$.  Thus,
\begin{align*}
\pp(|q_\ve(t,x)| > \delta) \le \pp(\phi(q_\ve(t,x)) > \eta(\delta))\le \frac{B\ve}{\eta(\delta)}.
\end{align*}
Note that this bound has no dependence on $t$ and $x$. Thus, if $\ve \to 0$ and $t_\ve, x_\ve$ vary arbitrarily with $\ve$, we have  $|q_\ve(t_\ve,x_\ve)|\to 0$ in probability. This allows us to apply Taylor expansion and deduce \eqref{sketch}, even if $t$ and $x$ vary arbitrarily as $\ve \to 0$. Some more work is needed to establish that the remainder term is negligible compared to the other terms (this requires the assumption that $\hess\phi(0)\ne 0$) and that the noise term converges to white noise.

\section{Space-time white noise}\label{whitesec}
In this section, we recall the definition of space-time white noise and construct the field $\xi^{(\ve)}$ that converges in law to space-time white noise in Theorem \ref{kpzprob}. For the construction of space-time white noise, we follow the prescription outlined in \cite[Chapter 1]{janson97}.
 
Take any $n\ge 1$. For $\alpha = (\alpha_1,\ldots,\alpha_n)\in \zz_{\ge 0}^n$, define $|\alpha| := \alpha_1+\cdots+ \alpha_n$, 
and let $D^\alpha$ be the differential operator
\begin{align*}
D^\alpha := \frac{\partial^{|\alpha|}}{\partial x_1^{\alpha_1}\partial x_2^{\alpha_2}\cdots \partial x_n^{\alpha_n}},
\end{align*}
acting on $C^\infty(\rr^{n})$. Moreover, for any $x = (x_1,\ldots, x_{n})\in \rr^{n}$, let
\begin{align*}
x^\alpha := x_1^{\alpha_1}x_2^{\alpha_2}\cdots x_{n}^{\alpha_{n}}.
\end{align*}
For $\alpha, \beta \in \zz_{\ge 0}^{n}$ and $f\in C^\infty(\rr^{n})$, define the semi-norm
\begin{align*}
p_{\alpha, \beta}(f) := \sup_{x\in \rr^{n}} |x^\alpha D^\beta f(x)|. 
\end{align*}
A function $f\in C^\infty(\rr^{n})$ is called a Schwartz function if $p_{\alpha, \beta}(f)<\infty$ for every $\alpha,\beta\in \zz_{\ge 0}^{n}$. In other words, $f$ and all its derivatives are decaying faster than any polynomial at infinity. The space of Schwartz functions is denoted by $\ms(\rr^{n})$. The standard topology on $\ms(\rr^{n})$ is the topology generated by the countable family of semi-norms $\{p_{\alpha, \beta}: \alpha,\beta \in \zz_{\ge 0}^{n}\}$. This space is metrizable, for example by the metric
\eq{
d(f,g) := \sum_{\alpha, \beta} 2^{-|\alpha|-|\beta|} \frac{p_{\alpha,\beta}(f-g)}{1+p_{\alpha, \beta}(f-g)}. 
}
It is well known that under the above topology,  $\ms(\rr^{n})$ is Fr\'echet space. A continuous linear of functional on $\ms(\rr^{n})$ is called a tempered distribution. The space of tempered distributions on $\rr^{n}$ is denoted by $\ms'(\rr^{n})$. 



Let $\smallavg{\phi, f}$ denote the action of $\phi\in \ms'(\rr^{n})$ on $f\in \ms(\rr^{n})$. When $\phi$ is a bounded measurable function on $\rr^n$ and $f\in \ms(\rr^n)$, let $\smallavg{\phi,f}$ denote the usual $L^2$ inner product of $\phi$ and $f$. It is easy to check that this defines a continuous linear functional on $\ms(\rr^n)$. Thus, bounded measurable functions may be viewed as tempered distributions. 

There is a natural topology on $\ms'(\rr^n)$, called the `strong dual topology', defined as follows. Recall that a subset $B$ of a topological vector space is said to be bounded if for any open neighborhood $V$ of the origin, there is some $\lambda>0$ such that $B\subseteq \lambda V$. The strong dual topology on $\ms'(\rr^n)$ is generated by the family of seminorms
\[
q_B(\phi) := \sup_{f\in B} |\smallavg{\phi, f}|, \ \ B\subseteq \ms(\rr^n) \text{ bounded.}
\]
It turns out that $\ms'(\rr^n)$ is a countable union of Polish spaces under this topology. On such spaces, the usual notion of convergence of probability measures remains unchanged --- a sequence $\{\mu_n\}_{n \ge 1}$ of probability measures on the Borel $\sigma$-algebra of $\ms'(\rr^n)$ is said to converge to a probability measure $\mu$ if $\int F d\mu_n \to \int Fd\mu$ for every bounded continuous function $F:\ms'(\rr^n)\to \rr$.

An important fact about the weak convergence of $\ms'(\rr^n)$-valued random variables (called `random distributions') is that a sequence $\{\Phi_n\}_{n\ge 1}$ of random distributions converges in law to a random distribution $\Phi$ if and only if $\smallavg{\Phi_n, f}$ converges in law to $\smallavg{\Phi, f}$ for every Schwartz function $f$. This is a nontrivial result, due to \citet{fernique67, fernique68}. For a simplified proof, see~\cite{biermeetal18}. We will use this fact below.\footnote{I thank Abdelmalek Abdesselam for telling me about this result, and also about the strong dual topology on $\ms'(\rr^n)$, which I was unaware of.}


It is  a  consequence of the Minlos--Bochner theorem (see \cite{biermeetal18}) that if $\mathcal{E}$ is a continuous, symmetric, positive semidefinite bilinear form on $\ms(\rr^n)\times \ms(\rr^n)$, then there is a unique centered Gaussian measure $\nu$ on $\ms'(\rr^n)$ whose covariance kernel is $\mathcal{E}$. This means that for any $\phi \in \ms'(\rr^n)$ and $f\in \ms(\rr^n)$, $\smallavg{\phi,f}$ is a centered Gaussian random variable, and the covariance of $\smallavg{\phi,f}$ and $\smallavg{\phi, g}$ is $\mathcal{E}(f,g)$. 

In our context, we wish to define space-time white noise on $\rr_{>0}\times \rr^d$. So let us take $n = d+1$ and consider $\rr_{>0}\times \rr^d$ as a subset of $\rr^n$. Define the bilinear form
\begin{align}\label{edef}
\mathcal{E}(f,g) := \int_{\rr_{>0}\times \rr^d} f(t,x)g(t,x)dtdx
\end{align}
on $\ms(\rr^n)\times \ms(\rr^n)$. It is easy to verify that this is a continuous, symmetric, positive semidefinite bilinear form. Thus, there is a unique centered Gaussian measure $\nu$ on $\ms'(\rr^n)$ whose covariance kernel is given by $\mathcal{E}$. This is the law of space-time white noise on  $\rr_{>0}\times \rr^d$. 

Let us now construct the field $\xi^{(\ve)}$ needed for Theorem \ref{kpzprob}. With all notation as in Theorem \ref{kpzprob}, define, for $(t,x)\in \rr_{>0}\times \rr^d$ and $\ve >0$, 
\begin{align}\label{xidef}
\xi^{(\ve)}(t,x) := \sigma^{-1}\alpha(\ve)^{-1/2}\beta(\ve)^{-d/2} z_{\lceil \alpha(\ve)^{-1} t\rceil + 1, \lceil \beta(\ve)^{-1} x\rceil},
\end{align}
where $\sigma$ is the standard deviation of the noise variables. 
Since any realization of $\xi^{(\ve)}$ is a bounded measurable function, we can view $\xi^{(\ve)}$ as a random tempered distribution. The following proposition shows that it converges in law to white noise as $\ve \to 0$. 
\begin{prop}\label{whiteprop}
As $\ve\to 0$, the field $\xi^{(\ve)}$ converges in law to white noise on $\rr_{>0}\times \rr^d$, in the sense defined above.
\end{prop}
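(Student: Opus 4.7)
The plan is to invoke the Fernique characterization recalled just before the proposition: the random distributions $\xi^{(\ve)}$ converge in law to space-time white noise $\Phi$ on $\rr_{>0}\times \rr^d$ if and only if $\smallavg{\xi^{(\ve)},f}$ converges in law to $\smallavg{\Phi,f}$ for every Schwartz function $f\in \ms(\rr^{d+1})$. Since $\smallavg{\Phi,f}$ is centered Gaussian with variance $\mathcal{E}(f,f) = \int_{\rr_{>0}\times \rr^d} f(t,x)^2\, dt\, dx$ by \eqref{edef}, I would reduce the problem to showing that, for each fixed Schwartz $f$, $\smallavg{\xi^{(\ve)},f}$ converges in law to $\mathcal{N}(0,\mathcal{E}(f,f))$ as $\ve \to 0$.

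The first step is to realize $\smallavg{\xi^{(\ve)},f}$ as a weighted sum of the i.i.d.\ noise variables. By \eqref{xidef}, $\xi^{(\ve)}$ is piecewise constant on the boxes
\[
Q_{s,y}^{(\ve)} := \bigl((s-1)\alpha(\ve),\,s\alpha(\ve)\bigr]\times \prod_{i=1}^{d}\bigl((y_i-1)\beta(\ve),\,y_i\beta(\ve)\bigr],\qquad (s,y)\in \zz_{\ge 1}\times \zz^d,
\]
taking value $\sigma^{-1}\alpha(\ve)^{-1/2}\beta(\ve)^{-d/2} z_{s+1,y}$ on $Q_{s,y}^{(\ve)}$. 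Setting $c_{s,y}^{(\ve)} := \sigma^{-1}\alpha(\ve)^{-1/2}\beta(\ve)^{-d/2}\int_{Q_{s,y}^{(\ve)}} f$, this gives
\[
\smallavg{\xi^{(\ve)},f} = \sum_{s\ge 1,\, y\in \zz^d} c_{s,y}^{(\ve)}\, z_{s+1,y},
\]
a sum of independent, mean-zero, bounded random variables, to which I would apply the Lindeberg--Feller CLT.

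The variance of this sum equals $\sum_{s,y} |Q_{s,y}^{(\ve)}|\,\bigl(\bar{f}_{s,y}^{(\ve)}\bigr)^2$, where $\bar{f}_{s,y}^{(\ve)}$ denotes the average of $f$ over $Q_{s,y}^{(\ve)}$. This is essentially a Riemann sum for $\int f^2\, dt\, dx$: on any compact ball the approximation error is controlled by the oscillation of $f$ on boxes of vanishing diameter, while the tail outside the ball is controlled by the rapid decay of the Schwartz function $f$. Hence the variance converges to $\mathcal{E}(f,f)$ as $\ve \to 0$. For the Lindeberg condition, the boundedness assumption on the noise yields $|c_{s,y}^{(\ve)} z_{s+1,y}|\le B\sigma^{-1}\|f\|_\infty \alpha(\ve)^{1/2}\beta(\ve)^{d/2}$ uniformly in $(s,y)$, which tends to zero; consequently, for any fixed $\eta > 0$ the Lindeberg truncation at level $\eta$ is eventually vacuous. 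The CLT then delivers the desired Gaussian limit. The only mild technical obstacle I anticipate is controlling the tail of the variance sum on the unbounded domain $\rr_{>0}\times\rr^d$, which is routine using the polynomial decay defining $\ms(\rr^{d+1})$.
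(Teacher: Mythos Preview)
Your proposal is correct and follows essentially the same route as the paper: both reduce to Fernique's criterion, partition $\rr_{>0}\times\rr^d$ into the lattice boxes on which $\xi^{(\ve)}$ is constant, rewrite $\smallavg{\xi^{(\ve)},f}$ as a weighted i.i.d.\ sum, and then appeal to a standard CLT together with the decay of $f$. The only cosmetic difference is that the paper gestures at characteristic functions and omits the details, whereas you spell out the Lindeberg--Feller verification (with the Lindeberg condition handled via the uniform bound $B\sigma^{-1}\|f\|_\infty\,\alpha(\ve)^{1/2}\beta(\ve)^{d/2}\to 0$); this is the same argument fleshed out.
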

\begin{proof}
Take any $f\in \ms(\rr\times \rr^d)$. By the discussion above, we have to show that $\smallavg{\xi^{(\ve)}, f}$ converges in law to a Gaussian random variable with mean zero and variance $\mathcal{E}(f,f)$ as $\ve \to 0$, where $\mathcal{E}$ is defined as in \eqref{edef}. Fix $\ve >0$. For $m\in \zz_{>0}$ and $v\in \zz^d$, let $B_{m,v}$ denote the cuboid in $\rr_{>0}\times \rr^d$ consisting of all $(t,x)$ such that $\lceil \alpha(\ve)^{-1} t\rceil = m$ and $\lceil \beta(\ve)^{-1}x \rceil = v$. Note that these cuboids form a partition of $\rr_{>0}\times \rr^d$. Let 
\[
\bar{f}_{m,v} := \frac{1}{\vol(B_{m,v})}\int_{B_{m,v}} f(t,x)dtdx= \frac{1}{\alpha(\ve)\beta(\ve)^d}\int_{B_{m,v}} f(t,x)dtdx
\]
denote the average value of $f$ in $B_{n,v}$. Then by the decay properties of $f$, it is not hard to justify that
\begin{align*}
\smallavg{\xi^{(\ve)}, f} &= \sum_{m,v} \int_{B_{m,v}} \xi^{(\ve)}(t,x) f(t,x) dtdx\\
&=  \sigma^{-1}\alpha(\ve)^{-1/2}\beta(\ve)^{-d/2}\sum_{m,v}   z_{m+1,v}\int_{B_{m,v}}  f(t,x) dtdx\\
&= \sigma^{-1}\alpha(\ve)^{1/2}\beta(\ve)^{d/2}\sum_{m,v}   z_{m+1,v}\bar{f}_{m,v}. 
\end{align*}
Thus, $\smallavg{\xi^{(\ve)}, f}$ is a linear combination of i.i.d.~random variables. The required central limit theorem for  $\smallavg{\xi^{(\ve)}, f}$ now follows by standard methods (e.g., using characteristic functions) and the decay properties of $f$. The details are omitted.
\end{proof}

\section{Proofs}\label{kpzproof}
First, we prove Theorem \ref{kpzprob} and Theorem \ref{oepthm}. Throughout, we will assume that the conditions on $\phi$ and the noise variables stated in Section \ref{modelsec} hold. Fix a realization of $f_\ve$. Then, for any $t\in \zz_{\ge 0}$ and $x\in \zz^d$, define a random walk on $\zz^d$ as follows. The walk starts at $x$ at time $t$, and goes backwards in time, until reaching time $0$. If the walk is at location $y\in \zz^d$ at time $s\ge 1$, then at time $s-1$ it moves to $y+a$ with probability $\partial_a\phi((f_\ve(s-1,y+a))_{a\in A})$, for $a\in A$, where $\partial_a \phi$ is the derivative of $\phi$ in coordinate $a$ (which exists, by our assumption that $\phi$ is differentiable everywhere). By \cite[Lemma 3.1]{chatterjee21b}, these numbers are nonnegative and sum to $1$ when summed over $a\in A$. Therefore, this describes a legitimate random walk on $\zz^d$, moving backwards in time. (Incidentally, when $\phi$  corresponds to the polymer model, then the law of the above random walk, conditional on the noise variables, is given by the classical polymer measure. Therefore, this random walk generalizes the polymer random walk to a general class of growth models.)

The following result is a special case of  \cite[Proposition 3.2]{chatterjee21b}.

\begin{prop}\label{derivprop}
Fix a realization of $f_\ve$.  Take any $1\le s\le t$ and $x,y\in \zz^d$. Let $\{S_r\}_{0\le r\le t}$ be the backwards random walk defined above, started at $x$ at time $t$. Then 
\begin{align*}
\fpar{}{z_{s,y}}f_\ve (t,x) &= \ve \pp(S_s = y). 
\end{align*}
\end{prop}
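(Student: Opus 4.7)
The plan is to prove the identity by induction on $t-s$ for fixed $s$, using the chain rule together with the observation that the transition probabilities of the backwards walk are precisely the partial derivatives of $\phi$ that appear when one differentiates the recursion \eqref{growthmodel}. The statement is essentially a discrete analogue of the tangent flow / Duhamel formula for a nonlinear equation linearized around its trajectory, and the backwards walk is the random-walk representation of the linearization.

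For the base case $s=t$, the only summand in \eqref{growthmodel} that depends on $z_{t,y}$ is the explicit $\ve z_{t,x}$ term, giving $\fpar{}{z_{t,y}} f_\ve(t,x) = \ve \, \mathbf{1}_{\{x=y\}}$. Since the backwards walk $\{S_r\}$ starts at $S_t = x$ deterministically, $\pp(S_t = y) = \mathbf{1}_{\{x=y\}}$, which matches. For the inductive step, suppose the identity holds at time $t-1$ for every starting point $x'\in \zz^d$, and fix $s \le t-1$. Since $z_{s,y}$ with $s\le t-1$ does not appear in the explicit noise term at time $t$, differentiating \eqref{growthmodel} and applying the chain rule gives
\begin{align*}
\fpar{}{z_{s,y}} f_\ve(t,x) &= \sum_{a\in A} \partial_a\phi\bigl((f_\ve(t-1, x+a'))_{a'\in A}\bigr)\cdot \fpar{}{z_{s,y}} f_\ve(t-1, x+a).
\end{align*}
By the inductive hypothesis applied to each starting point $x+a$, the second factor equals $\ve \, \pp(S^{(t-1,x+a)}_s = y)$, where $S^{(t-1,x+a)}$ denotes the backwards walk started at $x+a$ at time $t-1$.

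Finally, by the very definition of the backwards walk, conditioning on the first step from $(t,x)$ gives
\begin{align*}
\pp(S_s = y) &= \sum_{a\in A} \partial_a\phi\bigl((f_\ve(t-1, x+a'))_{a'\in A}\bigr)\cdot \pp(S^{(t-1,x+a)}_s = y),
\end{align*}
so combining with the chain-rule expansion yields $\fpar{}{z_{s,y}} f_\ve(t,x) = \ve \, \pp(S_s = y)$, completing the induction. The only non-obvious point to verify along the way is that the numbers $\partial_a\phi(\cdot)$ are nonnegative and sum to $1$, so that the backwards walk is actually a probability measure; this is handled by the cited \cite[Lemma 3.1]{chatterjee21b}, which in turn uses the equivariance and monotonicity assumptions on $\phi$. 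There is no real obstacle here beyond careful bookkeeping of the walks started at different spacetime points, and indeed the paper presents the result as a special case of \cite[Proposition 3.2]{chatterjee21b}.
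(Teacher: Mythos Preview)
Your proof is correct. The paper does not actually give a proof of this proposition, citing it instead as a special case of \cite[Proposition 3.2]{chatterjee21b}; your induction on $t-s$ via the chain rule and a first-step decomposition of the backwards walk is exactly the natural argument, and is presumably what appears in the cited reference.
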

This yields the following corollary.
\begin{cor}\label{maincor}
If $z_{1,y}$ is replaced by $0$ for each $y$, then the value of $f_\ve(t,x)$ changes by at most $\ve \max\{|z_{1,y}|: |x-y|_{1}< t\}$, where $|\cdot|_1$ denotes $\ell^1$ norm. 
\end{cor}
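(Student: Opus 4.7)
The plan is to linearly interpolate between the original noise field and the modified noise field (where $z_{1,y}$ has been set to zero for all $y$), and to bound the resulting change in $f_\ve(t,x)$ using the partial derivative formula of Proposition \ref{derivprop}. Concretely, for $\theta \in [0,1]$ let $\mathbf{z}(\theta)$ denote the noise field obtained from $\mathbf{z}$ by replacing $z_{1,y}$ with $(1-\theta)z_{1,y}$ for every $y \in \zz^d$, leaving all other noise variables unchanged. Set $F(\theta) := f_\ve(t,x;\mathbf{z}(\theta))$, so that $F(0)$ is the original value of $f_\ve(t,x)$ and $F(1)$ is the value after the replacement. The desired conclusion is equivalent to the bound $|F(1)-F(0)| \le \ve \max\{|z_{1,y}|:|x-y|_1 < t\}$.

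The next step is to apply the chain rule together with Proposition \ref{derivprop} at each $\theta$ to the realization $\mathbf{z}(\theta)$, obtaining
\[
F'(\theta) = -\ve \sum_{y \in \zz^d} z_{1,y}\, \pp^{(\theta)}(S_1 = y),
\]
where $\pp^{(\theta)}$ denotes the law of the backwards random walk of Proposition \ref{derivprop} defined from $\mathbf{z}(\theta)$, started at $x$ at time $t$. The key geometric observation is that since each step of this walk lies in $A = \{0,\pm e_1,\ldots,\pm e_d\}$, after the $t-1$ backwards steps needed to go from time $t$ down to time $1$ we necessarily have $|S_1 - x|_1 \le t-1 < t$. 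Consequently $\pp^{(\theta)}(S_1 = y) = 0$ whenever $|x-y|_1 \ge t$, and the remaining probabilities sum to at most $1$, so
\[
|F'(\theta)| \le \ve \max\{|z_{1,y}|:|x-y|_1<t\}.
\]
Integrating this bound over $\theta \in [0,1]$ yields the corollary.

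The only point requiring minor care is the absolute continuity of $F$ in $\theta$ needed to invoke the fundamental theorem of calculus. This is immediate, since $f_\ve(t,x)$ depends on only finitely many noise variables (those in its backward light cone) through iterated compositions of the differentiable function $\phi$ implicit in Proposition \ref{derivprop}, making $F$ smooth in $\theta$. I do not anticipate any substantive obstacle beyond this routine verification.
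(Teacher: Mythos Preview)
Your proof is correct and takes essentially the same approach as the paper. The paper phrases the argument as a general calculus fact---if $|\nabla f|_1 \le \ve$ everywhere then $|f(x)-f(0)| \le \ve |x|_\infty$, proved via the multivariate mean value theorem---while you carry out the equivalent interpolation explicitly and integrate; both rest on the same derivative identity from Proposition~\ref{derivprop} and the observation that the relevant probabilities sum to one.
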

\begin{proof}
This is a consequence of Proposition \ref{derivprop} and the fact that if $f$ is a differentiable real-valued function on $\rr^n$ for some $n$, and $|\nabla f(x)|_{1}\le \ve$ for all $x$, then $|f(x)-f(0)|\le \ve |x|_{\infty}$, where $|\cdot|_\infty$ denotes $\ell^\infty$ norm. This holds because, by the multivariate mean-value theorem $f(x)-f(0)=x\cdot \nabla f(y)$ for some $y$ on the line joining $x$ and $0$. 
\end{proof}
The above corollary allows us to prove the following lemma.
\begin{lmm}\label{step1}
For any $t\in \zz_{\ge 0}$ and $x\in\zz^d$, 
\begin{align*}
\ee(f_\ve(t+1,x)-f_\ve(t,x)) &\le B\ve, 
\end{align*}
where $B$ is a constant upper bound on the magnitude of the noise variables. 
\end{lmm}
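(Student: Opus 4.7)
The plan is to implement the coupling argument outlined in Section \ref{sketchsec}. Define an auxiliary random surface $g_\ve$ by taking exactly the same noise realization as $f_\ve$ except that every variable $z_{1,y}$ (with $y\in \zz^d$) is replaced by zero, and then letting $g_\ve$ evolve according to the recursion \eqref{growthmodel} with zero initial condition at time $0$. This construction is engineered so that $g_\ve(1,x)=\phi(0)+0=0$ for every $x\in \zz^d$, and from time $1$ onwards $g_\ve$ is driven by the i.i.d.\ noise variables $\{z_{s,y}:s\ge 2,\,y\in \zz^d\}$ starting from the all-zero configuration at time $1$.

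The first step is to bound $f_\ve(t+1,x)-g_\ve(t+1,x)$ pointwise. Since $f_\ve(t+1,x)$ is a function of the noise variables $\{z_{s,y}:1\le s\le t+1\}$ and $g_\ve(t+1,x)$ is obtained by setting the $z_{1,y}$'s to zero while keeping every other noise variable unchanged, Corollary \ref{maincor} gives the almost sure bound
\begin{align*}
|f_\ve(t+1,x)-g_\ve(t+1,x)| &\le \ve\max\{|z_{1,y}|:|x-y|_1<t+1\} \le B\ve,
\end{align*}
where the second inequality uses the uniform boundedness assumption on the noise variables.

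The second step is to identify the laws of $g_\ve(t+1,x)$ and $f_\ve(t,x)$. Both are the result of running the same deterministic recursion with the same driving function $\phi$ for $t$ steps, starting from the identically zero initial condition, driven by an i.i.d.\ family of noise variables with the common distribution of $z_{1,1}$. Hence $g_\ve(t+1,x)$ and $f_\ve(t,x)$ have the same distribution, so in particular $\ee(g_\ve(t+1,x))=\ee(f_\ve(t,x))$. Combining this with the pointwise bound,
\begin{align*}
\ee(f_\ve(t+1,x)-f_\ve(t,x)) &=\ee(f_\ve(t+1,x)-g_\ve(t+1,x))\le B\ve,
\end{align*}
which is the desired inequality. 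The argument is short and there is no real obstacle; the only subtle point is making sure the coupling is set up so that the modified field $g_\ve$ is truly a time-shifted copy of $f_\ve$, which is why we zero out all of time-slice $1$ (rather than time-slice $0$) of the noise.
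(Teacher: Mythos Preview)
Your proof is correct and follows essentially the same approach as the paper's: define $g_\ve$ by zeroing out the $z_{1,y}$ noise slice, use Corollary~\ref{maincor} to bound $|f_\ve(t+1,x)-g_\ve(t+1,x)|\le B\ve$, and observe that $g_\ve(t+1,x)$ has the same law as $f_\ve(t,x)$ to finish. The only minor remark is that your closing comment about ``time-slice $0$'' is moot, since the noise is indexed by $t\in\zz_{>0}$ and there is no time-slice $0$ to zero out.
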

\begin{proof}
Let $g_{\ve}(t,x)$ be the value of $f_\ve(t,x)$ after replacing all $z_{1,y}$ by $0$. Note that $g_\ve(1,x) = 0$ for each $x$. Thus, $g_\ve$ is just like $f_\ve$, except that instead of starting with an all zero initial condition at time $0$, we start with an all zero initial condition at time $1$. This implies that $g_\ve(t+1,x)$ has the same law as $f_\ve(t,x)$, which gives
\[
\ee(f_\ve(t+1,x)-f_\ve(t,x)) = \ee(f_\ve(t+1,x) - g_\ve(t+1,x)).
\]
By Corollary \ref{maincor}, the quantity on the right is bounded by $B\ve$. 
\end{proof}

As a corollary, we obtain the following important bound.
\begin{cor}\label{gradcor}
For any $t\in \zz_{\ge 0}$ and $x\in \rr^d$, 
\begin{align*}
\ee(\phi((f_\ve(t,x+a))_{a\in A}) - \bar{f}_\ve(t,x)) \le B\ve,
\end{align*}
where $\bar{f}_\ve$ is the local average defined in equation \eqref{localavg}. 
\end{cor}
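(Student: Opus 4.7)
The plan is to derive this corollary from Lemma \ref{step1} via two elementary observations that convert the one-step increment bound into a bound on the driving function minus the local average.

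First, take expectations in the growth recursion $f_\ve(t+1,x) = \phi((f_\ve(t,x+a))_{a\in A}) + \ve z_{t+1,x}$. Since $z_{t+1,x}$ has mean zero and is independent of $\{f_\ve(t,y)\}_{y\in \zz^d}$ (the latter being a function of $\{z_{s,y}: s\le t\}$ only), this yields
\[
\ee(f_\ve(t+1,x)) = \ee(\phi((f_\ve(t,x+a))_{a\in A})).
\]
Second, note that the law of $f_\ve(t,\cdot)$ is translation invariant: the initial condition is identically zero and the noise field $\mathbf{z}$ is i.i.d., so shifting the spatial argument simply relabels the noise variables without changing the joint distribution. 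In particular $\ee(f_\ve(t,x+a)) = \ee(f_\ve(t,x))$ for every $a\in A$, and averaging over $a\in A$ gives $\ee(\bar{f}_\ve(t,x)) = \ee(f_\ve(t,x))$.

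Subtracting the two identities produces
\[
\ee\bigl(\phi((f_\ve(t,x+a))_{a\in A}) - \bar{f}_\ve(t,x)\bigr) = \ee(f_\ve(t+1,x) - f_\ve(t,x)),
\]
which is at most $B\ve$ by Lemma \ref{step1}. There is no real obstacle: the substantive estimate is already contained in Lemma \ref{step1} (which in turn rests on the random-walk derivative identity of Proposition \ref{derivprop} and Corollary \ref{maincor}); the corollary is only a reinterpretation, turning the control on the time-increment into control on how much the driving function lifts the local average in one step, which is exactly the quantity needed to prove the Taylor expansion bound in the sketch of Theorem \ref{kpzprob}.
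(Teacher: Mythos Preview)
Your proof is correct and follows essentially the same approach as the paper: both take expectations in the recursion (using that the noise has mean zero), use translation invariance of the law to replace $\ee(f_\ve(t,x))$ by $\ee(\bar{f}_\ve(t,x))$, and then apply Lemma \ref{step1}. The only cosmetic difference is that you invoke independence of $z_{t+1,x}$ from the past to kill the noise term, whereas mean zero alone already suffices.
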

\begin{proof}
Since $f_\ve$ starts from an all zero initial condition, it follows that $\ee(f_\ve(t,y))$ does not depend on $y$. Thus,
\[
\ee(f_\ve(t,x)) = \ee(\bar{f}_\ve(t,x)).
\]
Since the noise variables have mean zero,
\begin{align*}
\ee(f_\ve(t+1,x)) &= \ee(\phi((f_\ve(t,x+a))_{a\in A}) + \ve z_{t+1,x}) \\
&= \ee(\phi((f_\ve(t,x+a))_{a\in A})). 
\end{align*}
Using the above two displays and Lemma \ref{step1}, we get the desired inequality.
\end{proof}

Our next goal is to show that $\phi(u)-\bar{u}$ grows at least quadratically in the distance of $u$ from $\bar{u}1$ when $\phi(u)-\bar{u}$ is small enough. We need two technical lemmas.
\begin{lmm}\label{nabla}
Under the assumptions on $\phi$ from Section \ref{modelsec} (specifically, symmetry, equivariance under constant shifts, and differentiability), it follows that $\nabla \phi(0) = (2d+1)^{-1}1$. 
\end{lmm}
\begin{proof}
It follows from \cite[Lemma 3.1]{chatterjee21b} that the coordinates of $\nabla \phi(0)$ sum to $1$. By symmetry, the coordinates are equal. This proves the result.
\end{proof}
\begin{lmm}\label{hess}
Let $\hess \phi(0)$ denote the Hessian matrix of $\phi$ at the origin. Then the diagonal entries of $\hess \phi(0)$ are all equal, and the off-diagonal entries are also all equal. If $q$ denotes the common value of the diagonal entries, and $r$ denotes the common value of the off-diagonal entries, then $q + 2d r = 0$. Moreover, $q$, $r$, and $q-r$ are nonzero.  
\end{lmm}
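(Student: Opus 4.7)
The plan is to extract everything from the three structural assumptions on $\phi$ at the origin: symmetry under permutations of the $2d+1$ coordinates, equivariance under constant shifts, and nondegeneracy of $\hess\phi(0)$.

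First I would establish the form of the Hessian. Since $\phi$ is symmetric in all coordinates, for any permutation $\pi$ of $A$ and any indices $a,b\in A$, one has $\partial_a\partial_b\phi(0) = \partial_{\pi(a)}\partial_{\pi(b)}\phi(0)$. Taking $\pi$ to be an arbitrary permutation fixing $\{a,b\}$ pointwise shows nothing new, but choosing $\pi$ that sends any diagonal pair $(a,a)$ to any other diagonal pair $(a',a')$ forces all diagonal entries to be equal, and a similar choice for off-diagonal pairs forces all off-diagonal entries to be equal. Call these common values $q$ and $r$ respectively, so $\hess\phi(0) = (q-r)I + rJ$, where $J$ is the $(2d+1)\times(2d+1)$ all-ones matrix.

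Next I would use equivariance to derive the relation $q+2dr=0$. Differentiating the identity $\phi(u+c\mathbf{1}) = \phi(u)+c$ twice with respect to $c$ and evaluating at $c=0$, $u=0$, yields
\[
\sum_{a,b\in A} \partial_a\partial_b\phi(0) = 0.
\]
The left-hand side equals $(2d+1)q + (2d+1)(2d)r = (2d+1)(q+2dr)$, so $q+2dr=0$, i.e.\ $r = -q/(2d)$.

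Finally, the nondegeneracy assumption says $\hess\phi(0)\ne 0$, which rules out $q=r=0$. Combined with $q+2dr=0$, this forces $q\ne 0$ (otherwise $r$ would also vanish), and hence $r = -q/(2d)\ne 0$ as well. Moreover, $q-r = q + q/(2d) = q(2d+1)/(2d)\ne 0$.

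No step here is a genuine obstacle: the argument is essentially linear algebra once the symmetry and equivariance hypotheses are unpacked. The only thing that requires mild care is making sure that the double sum in the equivariance computation is carried out correctly (counting $(2d+1)$ diagonal terms and $(2d+1)(2d)$ off-diagonal terms), and that the exclusion of $q=0$ really does follow from $\hess\phi(0)\ne 0$ via the relation $q+2dr=0$; both are immediate.
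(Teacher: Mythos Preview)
Your proof is correct and follows essentially the same route as the paper: symmetry gives the two-parameter form of $\hess\phi(0)$, differentiating the equivariance identity twice along the constant direction yields $\mathbf{1}\cdot\hess\phi(0)\mathbf{1}=0$ (i.e., $q+2dr=0$), and nondegeneracy combined with this relation forces $q$, $r$, and $q-r$ to be nonzero. The only cosmetic difference is that the paper packages the second step as computing $g''(0)$ for $g(t)=\phi(t\mathbf{1})$, which is the same calculation.
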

\begin{proof}
The symmetry of $\phi$ ensures the equality of all diagonal entries of $\hess \phi(0)$, and also the equality of all off-diagonal entries. Next, for $t\in \rr$, let $g(t) := \phi(t1)$. By the equivariance property, $g(t) = \phi(0)+t = t$, and hence $g''(t) \equiv 0$. On the other hand, simple calculation using solely the identity $g(t) = \phi(t1)$ shows that $g''(0) = 1\cdot \hess\phi(0)1$. Therefore, we get $1\cdot \hess\phi(0)1 = 0$, which is the same as~$q+2dr = 0$. By the nondegeneracy assumption, at least one of $q$ and $r$ is nonzero. But then, the identity $q+2dr = 0$ implies that both of them must be nonzero. Consequently, $q-r = 2dr - r = (2d-1)r$ is also nonzero. 
\end{proof}
Armed with the above lemmas, we are now ready to prove the following key fact. The proof uses the assumption of strict Edwards--Wilkinson domination.
\begin{lmm}\label{philower}
There exist $M>0$ and $c>0$ such that if $\phi(u) -\bar{u} \le M$, then
\[
\phi(u) - \bar{u} \ge c|u -\bar{u} 1|^2. 
\]
\end{lmm}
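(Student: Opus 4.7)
The plan is to reduce the lemma to a statement on the hyperplane $H := \{v \in \rr^A : \bar{v} = 0\}$. By the equivariance of $\phi$, for any $u \in \rr^A$ one has $\phi(u) - \bar{u} = \phi(u - \bar{u}1)$, and with $v := u - \bar{u}1 \in H$, the desired inequality $\phi(u) - \bar{u} \ge c|u - \bar{u}1|^2$ becomes $\phi(v) \ge c|v|^2$. Thus it suffices to produce $M, c > 0$ such that for every $v \in H$, the bound $\phi(v) \le M$ implies $\phi(v) \ge c|v|^2$.

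For $v$ near the origin, I would Taylor expand. Since $\phi(0) = 0$ and, by Lemma \ref{nabla}, $\nabla \phi(0) \cdot v = \bar{v} = 0$ on $H$, and since Lemma \ref{hess} gives $v^T \hess\phi(0) v = (q-r)|v|^2$ on $H$ (the off-diagonal contribution $r(\sum_a v_a)^2$ vanishing because $\bar{v} = 0$), the $C^2$ regularity at the origin yields $\phi(v) = \frac{q-r}{2}|v|^2 + o(|v|^2)$ as $v \to 0$ in $H$. Edwards--Wilkinson domination forces $\phi(v) \ge 0$, so $q - r \ge 0$; combined with $q - r \ne 0$ from Lemma \ref{hess}, this gives $q - r > 0$. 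Hence there exists $\delta > 0$ with $\phi(v) \ge \frac{q-r}{4}|v|^2$ for every $v \in H$ with $|v| \le \delta$.

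For $v$ far from the origin, I would invoke strict Edwards--Wilkinson domination in contrapositive form to obtain
\[ m := \inf\{\phi(v) : v \in H,\ |v| \ge \delta\} > 0. \]
If $m$ were zero, there would exist $v_n \in H$ with $|v_n| \ge \delta$ and $\phi(v_n) \to 0$. Since $\bar{v}_n = 0$, the hypothesis $\phi(v_n) - \bar{v}_n \to 0$ would force $v_n - \bar{v}_n 1 = v_n \to 0$, contradicting $|v_n| \ge \delta$. Setting $M := m/2$, the assumption $\phi(v) \le M < m$ then forces $|v| < \delta$, and the near-origin estimate yields $\phi(v) \ge \frac{q-r}{4}|v|^2$, proving the lemma with $c = (q-r)/4$.

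The main obstacle is the transition from a local to a global statement. The Taylor argument is a routine local computation, but extending even a positive pointwise lower bound on $\phi|_H$ to a uniform lower bound over the non-compact region $\{|v| \ge \delta\}$ is not automatic: continuity alone does not supply this, since $H$ is unbounded. It is precisely the strict form of Edwards--Wilkinson domination (and not just the pointwise inequality $\phi(u) \ge \bar{u}$) that provides this uniformity, through the contrapositive argument above.
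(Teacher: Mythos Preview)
Your proof is correct and uses the same ingredients as the paper's: the reduction to $H$ via equivariance, the Taylor expansion at the origin combined with the Hessian computation $v\cdot\hess\phi(0)v=(q-r)|v|^2$ on $H$, and strict Edwards--Wilkinson domination to control the region away from the origin. The only difference is organizational: the paper argues by contradiction (assuming failure for every $M,c$, extracting a sequence, using strict EW domination to force it toward the origin, and then reaching a contradiction via Taylor expansion and a compactness argument on the unit sphere of $H$), whereas you argue directly by first securing the local quadratic lower bound and then using strict EW domination to show that $\phi(v)\le M$ confines $v$ to the region where that bound applies. Your direct route also makes the positivity $q-r>0$ explicit, and yields concrete values $c=(q-r)/4$ and $M=m/2$; the paper's contradiction argument gets by with only $q-r\ne 0$ but is nonconstructive.
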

\begin{proof}
Suppose that the claim is not true. Then for any positive $M$ and $c$, there is some $u$ such that $\phi(u)-\bar{u} \le M$, but $\phi(u)-\bar{u} < c |u-\bar{u}1|^2$. For each $n$, find such a point $u_n$ for $M= c = 1/n$. Since $\phi(u_n) \ge \bar{u}_n$ (by Edwards--Wilkinson domination), this implies that $\phi(u_n) - \bar{u}_n \to 0$ and 
\[
\frac{ |u_n-\bar{u}_n1|^2}{n} >  \phi(u_n) - \bar{u}_n \ge 0. 
\]
Thus, we can divide throughout by $|u_n-\bar{u}_n1|^2$, and get
\begin{align}\label{uneq}
\frac{\phi(u_n) - \bar{u}_n}{|u_n - \bar{u}_n 1|^2} \to 0.
\end{align}
Let $y_n := u_n - \bar{u}_n1$. Since $\phi(u_n) -\bar{u}_n \to 0$, strict Edwards--Wilkinson domination gives us $y_n \to 0$. Also, note that $\bar{y}_n = 0$, $\phi(0)=0$, and by Lemma \ref{nabla}, $\nabla \phi(0) = (2d+1)^{-1} 1$. So, by the equivariance property of $\phi$ and Taylor expansion (recalling that $y_n \to 0$),  
\begin{align*}
\phi(u_n) -\bar{u}_n &= \phi(y_n)  = \phi(y_n) - \bar{y}_n\\
&= \phi(y_n) - \phi(0)-\nabla \phi(0)\cdot y_n\\
&=\frac{1}{2} y_n \cdot \hess \phi(0) y_n + o(|y_n|^2)
\end{align*}
as $n \to \infty$. Dividing both sides by $|y_n|^2$, and letting $z_n := y_n/|y_n|$, we get
\[
\frac{1}{2} z_n \cdot \hess \phi(0) z_n = \frac{\phi(u_n) - \bar{u}_n}{|u_n - \bar{u}_n1|^2} + o(1),
\]
which, by \eqref{uneq}, implies that $z_n\cdot \hess \phi(0) z_n \to 0$. But $|z_n|=1$ for each $n$, and so, passing to a subsequence if necessary, we may assume that $z_n \to z$ for some $z$ with $|z|=1$. Then $z\cdot \hess \phi(0) z = 0$. By Lemma \ref{hess}, this is the same as 
\begin{align*}
(q-r) |z|^2 + r\bar{z}^2 = 0,
\end{align*}
where $q$ and $r$ are as in Lemma \ref{hess}. But $\bar{z}_n = 0$ for each $n$, and so $\bar{z}=0$. Also, by Lemma \ref{hess}, $q-r\ne 0$. Thus, the above display shows that $z=0$, giving a contradiction to the prior observation that $|z|=1$. This completes the proof.
\end{proof}

Henceforth, let us fix two collections $\{t_\ve\}_{\ve >0}$ and $\{x_\ve\}_{\ve >0}$ in $\zz_{>0}$ and $\zz^d$, respectively. We make no assumptions about these collections; they can be completely arbitrary. Let us define some quantities whose behaviors, as $\ve \to 0$, will be of interest to us. Let
\begin{align*}
A_\ve &:= \bar{f}_\ve(t_\ve,x_\ve) - f_\ve(t_\ve, x_\ve), \\
B_\ve &:= \frac{1}{2}(q-r)\sum_{a\in A} (f_\ve(t_\ve, x_\ve+a) - \bar{f}_\ve(t_\ve,x_\ve))^2,\\
C_\ve &:= \ve z_{t_\ve+1,x_\ve},\\
D_\ve &:= f_\ve(t_\ve + 1, x_\ve) - f_\ve(t_\ve,x_\ve) - A_\ve - B_\ve - C_\ve. 
\end{align*}
We now prove a series of lemmas about these quantities. 
A general fact that we will use a number of times is the following.
\begin{lmm}\label{oplmm}
Let $\{X_\ve\}_{\ve >0}$ and $\{Y_\ve\}_{\ve >0}$ be two collections of random variables defined on the same probability space and $\{c_\ve\}_{\ve >0}$ and $\{d_\ve\}_{\ve >0}$ be two collections of positive real numbers. If $X_\ve = o_P(c_\ve)$ and $Y_\ve = O_P(d_\ve)$, then $X_\ve Y_\ve = o_P(c_\ve d_\ve)$. 
\end{lmm}
\begin{proof}
Take any $\delta, \eta >0$. Since $Y_\ve = O_P(d_\ve)$, there exists $K$ so large that 
\[
\limsup_{\ve \to 0} \pp(|Y_\ve| > Kd_\ve) \le \eta. 
\]
Then 
\begin{align*}
\pp(|X_\ve Y_\ve| > \delta c_\ve d_\ve) &\le \pp(|X_\ve| > K^{-1}\delta c_\ve) + \pp(|Y_\ve| > Kd_\ve).
\end{align*}
This shows that 
\begin{align*}
&\limsup_{\ve \to 0} \pp(|X_\ve Y_\ve| > \delta c_\ve d_\ve) \\
&\le \limsup_{\ve \to 0} \pp(|X_\ve| > K^{-1}\delta c_\ve) + \limsup_{\ve \to 0} \pp(|Y_\ve| > Kd_\ve)\le \eta. 
\end{align*}
Since $\eta$ is arbitrary, the left side must be equal to zero. Since $\delta$ is arbitrary, this proves that $X_\ve Y_\ve = o_P(c_\ve d_\ve)$. 
\end{proof}
\begin{lmm}\label{blmm}
As $\ve \to 0$, $B_\ve = O_P(\ve)$. 
\end{lmm}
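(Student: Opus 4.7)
Write $u_\ve := (f_\ve(t_\ve, x_\ve+a))_{a\in A} \in \rr^A$, so that $\bar{u}_\ve = \bar{f}_\ve(t_\ve,x_\ve)$ and
\[
B_\ve = \frac{q-r}{2} \sum_{a\in A}\bigl(f_\ve(t_\ve,x_\ve+a) - \bar{f}_\ve(t_\ve,x_\ve)\bigr)^2 = \frac{q-r}{2}\,|u_\ve - \bar{u}_\ve 1|^2 .
\]
Since $q-r$ is a fixed nonzero constant by Lemma \ref{hess}, it is enough to show that $|u_\ve - \bar{u}_\ve 1|^2 = O_P(\ve)$.

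The plan is to combine Corollary \ref{gradcor} with Lemma \ref{philower}. By strict Edwards--Wilkinson domination the random variable $\phi(u_\ve) - \bar{u}_\ve$ is nonnegative, and Corollary \ref{gradcor} gives $\ee(\phi(u_\ve) - \bar{u}_\ve) \le B\ve$. Markov's inequality then yields, for every $\eta>0$,
\[
\pp\bigl(\phi(u_\ve) - \bar{u}_\ve > \eta\bigr) \le \frac{B\ve}{\eta}.
\]
This is a tail bound on the \emph{functional} $\phi(u_\ve)-\bar{u}_\ve$, and the goal is to transfer it to a tail bound on the geometric quantity $|u_\ve - \bar{u}_\ve 1|^2$.

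This transfer is exactly what Lemma \ref{philower} provides: there exist $M,c>0$ such that whenever $\phi(u)-\bar{u}\le M$, one has $|u-\bar{u}1|^2 \le c^{-1}(\phi(u)-\bar{u})$. Fix $K>0$ and take $\ve$ small enough that $cK\ve < M$. Then the event $\{|u_\ve-\bar{u}_\ve 1|^2 > K\ve\}$ is contained in $\{\phi(u_\ve)-\bar{u}_\ve > cK\ve\}$: indeed, if $\phi(u_\ve)-\bar{u}_\ve \le M$ we may apply Lemma \ref{philower} to get $\phi(u_\ve)-\bar{u}_\ve \ge c|u_\ve-\bar{u}_\ve 1|^2 > cK\ve$, and if $\phi(u_\ve)-\bar{u}_\ve > M$ then by the choice of $\ve$ the same inequality trivially holds. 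The preceding Markov bound then gives
\[
\pp\bigl(|u_\ve - \bar{u}_\ve 1|^2 > K\ve\bigr) \le \pp\bigl(\phi(u_\ve)-\bar{u}_\ve > cK\ve\bigr) \le \frac{B\ve}{cK\ve} = \frac{B}{cK},
\]
uniformly in small $\ve$. Sending $\ve\to 0$ and then $K\to\infty$ gives $|u_\ve-\bar{u}_\ve 1|^2 = O_P(\ve)$, hence $B_\ve = O_P(\ve)$.

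There is no real obstacle here; the only care required is to ensure that the scale $K\ve$ at which one wants a quadratic control is smaller than the neighborhood size $M/c$ in Lemma \ref{philower}, which is automatic once $\ve$ is small compared to $K^{-1}$. The key inputs are the in-expectation estimate from Corollary \ref{gradcor} (which itself relies on the $o_P$-derivative bound Corollary \ref{maincor} and boundedness of the noise) and the quadratic lower bound of Lemma \ref{philower} (which encodes the strict Edwards--Wilkinson domination and the nondegeneracy of $\hess\phi(0)$); the lemma just packages them together.
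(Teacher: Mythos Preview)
Your proof is correct and follows essentially the same route as the paper: both combine the expectation bound from Corollary~\ref{gradcor} with nonnegativity (Edwards--Wilkinson domination) to get a Markov tail bound on $\phi(u_\ve)-\bar{u}_\ve$, and then invoke the quadratic lower bound of Lemma~\ref{philower} to transfer this to $|u_\ve-\bar{u}_\ve 1|^2$. The only cosmetic difference is that the paper first phrases the Markov step as $\Phi_\ve = O_P(\ve)$ and then applies Lemma~\ref{philower}, whereas you carry out the event containment directly; the ingredients and logic are the same.
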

\begin{proof}
By Corollary \ref{gradcor} and Edwards--Wilkinson domination, 
\begin{align}\label{phismall}
\Phi_\ve := \phi((f_\ve(t_\ve,x_\ve+a))_{a\in A}) - \bar{f}_\ve(t_\ve,x_\ve) = O_P(\ve)
\end{align}
as $\ve \to 0$.  Take any $\delta >0$. By \eqref{phismall}, there is exist $K>0$ and $\ve_0>0$ such that for any $\ve \in (0,\ve_0)$,
\begin{align}\label{phive}
\pp(\Phi_\ve > K\ve) \le \delta. 
\end{align}
Let $M$ and $c$ be as in Lemma \ref{philower}. Take any $\ve \le M/K$. Then if $\Phi_\ve \le K\ve\le M$, Lemma \ref{philower} gives 
\[
|B_\ve| \le \frac{|q-r|}{2c} \Phi_\ve \le \frac{|q-r|}{2c}K\ve. 
\]
Thus, by \eqref{phive}, for $\ve < \min\{M/K, \ve_0\}$, we have 
\begin{align*}
\pp\biggl(|B_\ve| >  \frac{|q-r|}{2c}K\ve\biggr) &\le \pp(\Phi_\ve > K\ve) \le \delta,
\end{align*}
which shows that the $\limsup$ of the left side as $\ve \to 0$ is also bounded by $\delta$. 
Thus, $B_\ve = O_P(\ve)$ as $\ve \to 0$, according to the above definition of the $O_P$ notation. 
\end{proof}
\begin{lmm}\label{deplmm}
As $\ve \to 0$, $D_\ve = o_P(B_\ve)$. 
\end{lmm}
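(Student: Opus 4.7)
The plan is to combine the growth equation with a second-order Taylor expansion of $\phi$ at the origin, and use the fact (forthcoming from Corollary~\ref{gradcor} and Lemma~\ref{philower}) that $q_\ve(t_\ve, x_\ve) := (f_\ve(t_\ve, x_\ve + a) - \bar{f}_\ve(t_\ve, x_\ve))_{a \in A}$ is $O_P(\sqrt{\ve})$. First, the growth equation \eqref{growthmodel} together with the equivariance of $\phi$ yields
\begin{align*}
f_\ve(t_\ve + 1, x_\ve) - f_\ve(t_\ve, x_\ve) = A_\ve + \phi(q_\ve(t_\ve, x_\ve)) + C_\ve,
\end{align*}
so $D_\ve = \phi(q_\ve(t_\ve, x_\ve)) - B_\ve$, and the task reduces to showing $\phi(q_\ve) - B_\ve = o_P(B_\ve)$.

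For the control of $|q_\ve|$, equivariance gives $\phi((f_\ve(t_\ve,x_\ve+a))_a) - \bar{f}_\ve(t_\ve,x_\ve) = \phi(q_\ve(t_\ve,x_\ve))$, so Corollary~\ref{gradcor} reads $\ee[\phi(q_\ve(t_\ve,x_\ve))] \le B\ve$. Edwards--Wilkinson domination combined with $\bar{q}_\ve = 0$ gives $\phi(q_\ve) \ge 0$, and Markov's inequality then yields $\phi(q_\ve(t_\ve,x_\ve)) = O_P(\ve)$ (as in the proof of Lemma~\ref{blmm}). Applying Lemma~\ref{philower} with $u = q_\ve$ (so that $u - \bar{u}1 = q_\ve$) on the high-probability event $\{\phi(q_\ve) \le M\}$ gives $|q_\ve|^2 \le \phi(q_\ve)/c$, whence $|q_\ve(t_\ve,x_\ve)| = O_P(\sqrt{\ve})$, and in particular $q_\ve(t_\ve,x_\ve) \to 0$ in probability.

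Since $\phi$ is $C^2$ near the origin and $\phi(0) = 0$, there exists a function $h$, continuous at $0$ with $h(0) = 0$, such that in a neighborhood of $0$,
\begin{align*}
\phi(u) = \nabla\phi(0) \cdot u + \tfrac{1}{2} u \cdot \hess\phi(0) u + R(u), \qquad |R(u)| \le h(|u|) |u|^2.
\end{align*}
Evaluating at $u = q_\ve(t_\ve, x_\ve)$: by Lemma~\ref{nabla}, $\nabla\phi(0) = (2d+1)^{-1} 1$, so $\nabla\phi(0) \cdot q_\ve = \bar{q}_\ve = 0$; and by Lemma~\ref{hess}, $\hess\phi(0) = (q-r) I + r J$ (where $J$ is the all-ones matrix), so using $\bar{q}_\ve = 0$,
\begin{align*}
\tfrac{1}{2} q_\ve \cdot \hess\phi(0) q_\ve = \tfrac{1}{2}(q-r)|q_\ve|^2 + \tfrac{r}{2}\bigl((2d+1)\bar{q}_\ve\bigr)^2 = \tfrac{1}{2}(q-r)|q_\ve|^2 = B_\ve.
\end{align*}
Consequently, $D_\ve = R(q_\ve(t_\ve, x_\ve))$.

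To conclude, since $|B_\ve| = \tfrac{1}{2}|q-r|\,|q_\ve|^2$ with $q \ne r$ by Lemma~\ref{hess}, for any $\delta > 0$,
\begin{align*}
\pp(|D_\ve| > \delta |B_\ve|) \le \pp\bigl(h(|q_\ve(t_\ve,x_\ve)|) > \tfrac{\delta}{2}|q-r|\bigr) \longrightarrow 0,
\end{align*}
since $q_\ve(t_\ve,x_\ve) \to 0$ in probability and $h$ is continuous at $0$ (the events $\{q_\ve = 0\}$ cause no trouble because then $D_\ve = B_\ve = 0$). This proves $D_\ve = o_P(B_\ve)$. The main obstacle in the plan is the uniform-in-$(t_\ve,x_\ve)$ smallness $|q_\ve(t_\ve,x_\ve)| = O_P(\sqrt\ve)$: it is here that strict Edwards--Wilkinson domination, via the quadratic lower bound of Lemma~\ref{philower}, is indispensable, since without it the $O_P(\ve)$ bound on $\phi(q_\ve)$ would not translate into smallness of $q_\ve$ itself, and the Taylor remainder could not be compared with $B_\ve$.
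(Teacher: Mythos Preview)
Your proof is correct and follows essentially the same approach as the paper: reduce to $D_\ve=\phi(q_\ve)-B_\ve$ via equivariance, identify the second-order Taylor term with $B_\ve$ using Lemmas~\ref{nabla} and~\ref{hess}, and control the remainder using $|q_\ve|\to 0$ in probability. The only difference is cosmetic: the paper invokes Lemma~\ref{blmm} directly to get $|Q_\ve|\to 0$, whereas you re-derive the $O_P(\sqrt{\ve})$ bound on $q_\ve$ from Corollary~\ref{gradcor} and Lemma~\ref{philower}, which is exactly the content of that lemma's proof.
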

\begin{proof}
First, note that by the equivariance property of $\phi$, 
\begin{align}
f_\ve(t_\ve + 1, x_\ve) - f_\ve(t_\ve,x_\ve)  &= \phi((f_\ve(t_\ve, x_\ve +a))_{a\in A}) + C_\ve - f_\ve(t_\ve,x_\ve)\notag\\
&=  \phi((f_\ve(t_\ve, x_\ve +a))_{a\in A}) + C_\ve + A_\ve - \bar{f}_\ve(t_\ve,x_\ve)\notag\\
&=\phi(Q_\ve) + C_\ve + A_\ve,\label{newf}
\end{align}
where
\[
Q_\ve := (f_\ve(t_\ve, x_\ve +a) - \bar{f}_\ve(t_\ve,x_\ve))_{a\in A}.
\]
Using Taylor expansion, the assumption that $\phi(0)=0$, the observation that $1\cdot Q_\ve = 0$, and the formulas for $\nabla \phi(0)$ and $\hess\phi(0)$ from Lemma \ref{nabla} and Lemma \ref{hess}, we get
\begin{align*}
|\phi(Q_\ve) - B_\ve| &= \biggl|\phi(Q_\ve) - \phi(0)-\nabla\phi(0)\cdot Q_\ve -\frac{1}{2}Q_\ve \cdot \hess\phi(0) Q_\ve\biggr|\\
&\le |Q_\ve|^2 h(|Q_\ve|),
\end{align*}
where $h:[0,\infty)\to [0,\infty)$ is a function such that $h(x)\to 0$ as $x\to0$. Since $q\ne r$ and $B_\ve = \frac{1}{2}(q-r)|Q_\ve|^2$, the above inequality and the fact that $h(x)\to 0$ as $x\to 0$ show that for any $\eta>0$ there is some $\delta>0$ such that for any $\ve$,
\begin{align*}
\pp(|\phi(Q_\ve)-B_\ve| > \eta |B_\ve|) &\le \pp(h(|Q_\ve|) > \eta|q-r|/2)\\
&\le \pp(|Q_\ve| > \delta).
\end{align*} 
But by Lemma \ref{blmm}, $|Q_\ve|\to 0$ in probability as $\ve \to 0$. Thus, the last expression in the above display tends to zero as $\ve \to 0$. This shows that $\phi(Q_\ve) = B_\ve + o_P(B_\ve)$. By \eqref{newf}, this completes the proof of the lemma. 
\end{proof}
As a corollary of the three lemmas above, we immediately get the following. This will be useful later. 
\begin{cor}\label{dep}
As $\ve \to 0$, $D_\ve = o_P(\ve)$. 
\end{cor}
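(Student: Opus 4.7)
The plan is to derive this as an immediate consequence of the two preceding lemmas together with Lemma \ref{oplmm}. Specifically, Lemma \ref{deplmm} says that $D_\ve = o_P(B_\ve)$, which by the definition of $o_P$ means we can write $D_\ve = R_\ve B_\ve$ for some random quantity $R_\ve$ satisfying $R_\ve \to 0$ in probability as $\ve \to 0$; in other words, $R_\ve = o_P(1)$. On the other hand, Lemma \ref{blmm} says that $B_\ve = O_P(\ve)$.

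Applying Lemma \ref{oplmm} with $X_\ve = R_\ve$, $Y_\ve = B_\ve$, $c_\ve = 1$, and $d_\ve = \ve$ then yields $D_\ve = R_\ve B_\ve = o_P(\ve)$, which is exactly the claim.

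The only minor subtlety I foresee is interpreting $D_\ve = o_P(B_\ve)$ on the event where $B_\ve$ could vanish, but this is not really an obstacle: the statement of Lemma \ref{deplmm} is used only through the fact that for every $\eta > 0$, $\pp(|D_\ve| > \eta |B_\ve|) \to 0$, and combining this with the $O_P(\ve)$ bound on $B_\ve$ via the exact same union-bound argument as in the proof of Lemma \ref{oplmm} directly gives $\pp(|D_\ve| > \delta \ve) \to 0$ for every $\delta > 0$. So the corollary reduces to a one-line application of the composition rule for $o_P$ and $O_P$, and there is no real hard step.
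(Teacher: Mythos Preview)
Your proof is correct and matches the paper's approach exactly: the paper states the corollary as an immediate consequence of Lemmas \ref{oplmm}, \ref{blmm}, and \ref{deplmm}, which is precisely the combination you use. Your remark about the event $\{B_\ve = 0\}$ is a nice clarification, and the union-bound argument you outline is exactly the content of Lemma \ref{oplmm} applied in this setting.
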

Next, we prove `lower bounds in probability' for $A_\ve$, $C_\ve$, and $A_\ve + B_\ve + C_\ve$. In the following, $z$ denotes a random variable following the law of the noise variables.
\begin{lmm}\label{cep}
As $\ve \to 0$, $C_\ve^{-1} = O_P(\ve^{-1})$.
\end{lmm}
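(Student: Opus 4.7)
The plan is straightforward and rests entirely on the absolute continuity of the noise variables. Unpacking the definition, $C_\ve^{-1} = \ve^{-1} z_{t_\ve+1,x_\ve}^{-1}$, so to establish $C_\ve^{-1} = O_P(\ve^{-1})$ it suffices, by the definition of $O_P$, to show
\[
\lim_{K\to\infty} \limsup_{\ve \to 0} \pp\bigl(|z_{t_\ve+1,x_\ve}^{-1}| > K\bigr) = 0.
\]

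Since the variables $\{z_{t,x}\}$ are i.i.d., each $z_{t_\ve+1,x_\ve}$ has the same law as a single noise variable $z$, so the probability above equals $\pp(|z| < 1/K)$ and is in fact independent of $\ve$. The limit as $K \to \infty$ is then controlled by monotone/dominated convergence: the events $\{|z| < 1/K\}$ decrease to $\{z = 0\}$, and by the assumed absolute continuity of the law of $z$ with respect to Lebesgue measure, $\pp(z = 0) = 0$. Hence $\pp(|z| < 1/K) \to 0$ as $K \to \infty$, which is exactly what we need.

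There is essentially no obstacle here; the only substantive hypothesis invoked is the continuity assumption on the noise field, and the statement is insensitive to the choice of sequences $t_\ve$ and $x_\ve$ precisely because of the i.i.d.\ structure. Note that boundedness of $z$ is not needed for this lemma, but absolute continuity is indispensable: without it, $\pp(z = 0)$ could be positive and the claim would fail.
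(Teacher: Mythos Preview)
Your proof is correct and essentially identical to the paper's: both reduce the claim to $\pp(|z|<K^{-1})\to 0$ as $K\to\infty$ via the i.i.d.\ structure, and then invoke absolute continuity of the law of $z$. Your additional remarks on which hypotheses are used are accurate but not part of the paper's argument.
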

\begin{proof}
Take any $K>0$. Then
\begin{align*}
\pp(|\ve C_\ve^{-1}|>K) &= \pp(|z_{t_\ve +1, x_\ve}|^{-1} > K)\\
&= \pp(|z| < K^{-1}),
\end{align*}
which tends to zero as $K\to \infty$, since the law of $z$ is absolutely continuous with respect to Lebesgue measure.
\end{proof}
\begin{lmm}\label{aep}
As $\ve \to 0$, $A_\ve^{-1} = O_P(\ve^{-1})$. 
\end{lmm}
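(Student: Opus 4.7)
The plan is to mirror the proof of Lemma \ref{cep} by isolating inside $A_\ve$ an additive contribution of $\ve$ times a continuously distributed random variable that is independent of the rest. While $C_\ve$ was already in that form, $A_\ve$ is not; however, one step of the recursion \eqref{growthmodel} at time $t_\ve$ exposes exactly such a decomposition.

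Concretely, applying \eqref{growthmodel} to each $f_\ve(t_\ve, x_\ve + a)$ for $a \in A$ and substituting into the definition of $A_\ve$, I would write $A_\ve = X_\ve + \ve N_\ve$ with
\[
X_\ve := \frac{1}{2d+1}\sum_{a\in A}\phi_a - \phi_0, \qquad N_\ve := \frac{1}{2d+1}\sum_{a\in A} z_{t_\ve,\, x_\ve+a} - z_{t_\ve,\,x_\ve},
\]
where $\phi_a := \phi((f_\ve(t_\ve-1, x_\ve+a+b))_{b\in A})$. Then $X_\ve$ is measurable with respect to the noise variables at times $\le t_\ve - 1$, while $N_\ve$ is measurable with respect to the noise variables at time $t_\ve$, so $X_\ve$ and $N_\ve$ are independent. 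Moreover, $N_\ve$ is a linear combination of $2d+1$ i.i.d.\ copies of the noise with all coefficients nonzero (namely $-2d/(2d+1)$ for $a=0$ and $1/(2d+1)$ for $a\ne 0$), so its law $\mu_N$ does not depend on $\ve$ and inherits absolute continuity and boundedness from $z$; in particular its CDF $F_N$ is continuous on $\rr$ and has compact support, hence is uniformly continuous on $\rr$.

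With these ingredients in hand, conditioning on $X_\ve$ and using independence gives
\[
\pp(|A_\ve| < \ve/K) = \pp(|N_\ve + X_\ve/\ve| < 1/K) \le \sup_{c \in \rr}\bigl(F_N(c + 1/K) - F_N(c - 1/K)\bigr),
\]
and the right-hand side tends to $0$ as $K \to \infty$ by uniform continuity of $F_N$, with a bound that is uniform in $\ve$, $t_\ve$, and $x_\ve$. This is exactly the statement $A_\ve^{-1} = O_P(\ve^{-1})$.

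The only step that requires any thought is establishing the decomposition $A_\ve = X_\ve + \ve N_\ve$ with the correct independence structure, and checking that $N_\ve$ is genuinely absolutely continuous (which is automatic since all $2d+1$ of its coefficients are nonzero, so $\mu_N$ is a convolution of $2d+1$ absolutely continuous measures). Once this decomposition is in place, the small-ball bound via uniform continuity of $F_N$ is routine and parallels the argument for $C_\ve$ in Lemma \ref{cep}.
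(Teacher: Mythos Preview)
Your proof is correct and follows essentially the same approach as the paper: expand $A_\ve$ via one step of the recursion \eqref{growthmodel} at time $t_\ve$ to expose an additive term of the form $\ve$ times an absolutely continuous random variable independent of the remainder, then bound the small-ball probability uniformly in $\ve$. The only minor difference is that you isolate the full time-$t_\ve$ noise combination $N_\ve$ (and argue its absolute continuity via convolution), whereas the paper isolates only the single variable $z_{t_\ve,x_\ve}$ with coefficient $b=-2d/(2d+1)$ and folds the remaining time-$t_\ve$ terms into the independent remainder $R_\ve$; this spares the convolution/uniform-continuity step but is otherwise the same argument.
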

\begin{proof}
Note that $\ve^{-1} A_\ve$ can be written as $bz_{t_\ve, x_\ve} + R_\ve$, where $R_\ve$ and $z_{t_\ve, x_\ve}$ are  independent, and $b = -2d/(2d+1)$. Since the law of $z$ is absolutely continuous with respect to Lebesgue measure, it is a standard fact that for any $\delta>0$ there is some $\eta >0$ such that $\pp(z\in S)< \delta$ for any Borel set $S$ with Lebesgue measure  less than $\eta$. This shows that for any $K>0$ and $r\in \rr$,
\begin{align*}
\pp(|\ve A_\ve^{-1}| > K \mid R_\ve = r) &= \pp(|bz_{t_\ve, x_\ve} + R_\ve| < K^{-1} \mid R_\ve = r)\\
&= \pp(|bz_{t_\ve, x_\ve} + r| < K^{-1} \mid R_\ve = r)\\
&= \pp(|bz + r| < K^{-1})\le f(K),
\end{align*}
where $f(K)$ is a function only of $K$, with no dependence on $r$, that tends to zero as $K\to\infty$. Since $f(K)$ has no dependence on $r$ and $\ve$, we can take expectation over $r$ on the left side and arrive at the desired result.
\end{proof}
\begin{lmm}\label{abcep}
As $\ve \to 0$, $(A_\ve + B_\ve+C_\ve)^{-1} = O_P(\ve^{-1})$. 
\end{lmm}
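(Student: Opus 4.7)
The plan is to mimic the proof of Lemma \ref{aep}, exploiting the independence of $C_\ve$ from the pair $(A_\ve, B_\ve)$. Since $A_\ve$ and $B_\ve$ are measurable functions of $(f_\ve(t_\ve, x_\ve + a))_{a\in A}$, and the latter is determined entirely by the noise variables $\{z_{s,y} : 1 \le s \le t_\ve,\ y\in \zz^d\}$, the random variable $R_\ve := A_\ve + B_\ve$ is independent of $z_{t_\ve+1, x_\ve}$, which is the unique noise variable appearing in $C_\ve$. Hence $A_\ve + B_\ve + C_\ve$ decomposes as $R_\ve$ plus an independent $\ve$-scaled continuous noise variable.

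Conditioning on $R_\ve = r$ and letting $z$ have the common law of the noise variables, the next step is the computation
\[
\P\bigl(|A_\ve + B_\ve + C_\ve| < \ve/K \bigm| R_\ve = r\bigr) = \P(|r + \ve z| < \ve/K) = \P(|z + r/\ve| < 1/K),
\]
valid for any $K > 0$ and any $r \in \rr$. By the absolute continuity of the law of $z$ with respect to Lebesgue measure, for any $\delta > 0$ there exists $\eta > 0$ such that $\P(z \in S) < \delta$ whenever the Lebesgue measure of $S$ is less than $\eta$; applying this to the interval of length $2/K$ centered at $-r/\ve$ gives a bound $f(K)$ on the right-hand side that is independent of both $r$ and $\ve$, with $f(K) \to 0$ as $K \to \infty$. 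This uniformity in $r$ is the one point requiring care, but it is exactly the absolute-continuity input already exploited in the proof of Lemma \ref{aep}.

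Taking expectation over $R_\ve$ then yields
\[
\P\bigl(|\ve (A_\ve + B_\ve + C_\ve)^{-1}| > K\bigr) = \P\bigl(|A_\ve + B_\ve + C_\ve| < \ve/K\bigr) \le f(K),
\]
with no dependence on $\ve$ or on the choice of sequences $(t_\ve, x_\ve)$. Since $f(K) \to 0$ as $K \to \infty$, this is precisely the statement $(A_\ve + B_\ve + C_\ve)^{-1} = O_P(\ve^{-1})$. I do not foresee any genuine obstacle; the argument is a routine adaptation of Lemma \ref{aep}, with the only change being that one uses the independence of the future noise variable $z_{t_\ve+1, x_\ve}$ from everything built out of noise at times $\le t_\ve$, instead of splitting a single variable into two independent pieces as in Lemma \ref{aep}.
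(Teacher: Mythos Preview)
Your proof is correct and follows essentially the same approach as the paper: writing $\ve^{-1}(A_\ve+B_\ve+C_\ve)=z_{t_\ve+1,x_\ve}+Q_\ve$ with $Q_\ve$ independent of $z_{t_\ve+1,x_\ve}$, and then invoking the argument of Lemma~\ref{aep}. The paper's version is more terse, simply referring back to Lemma~\ref{aep}, while you spell out the conditioning argument explicitly.
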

\begin{proof}
Note that $\ve^{-1}(A_\ve+B_\ve+C_\ve)$ can be written as $z_{t_\ve +1,x_\ve} + Q_\ve$, where $Q_\ve$ and $z_{t_\ve +1,x_\ve}$ are independent. The rest of the proof proceeds exactly as in the proof of Lemma \ref{aep}.
\end{proof}
Combining Corollary \ref{dep}, and Lemmas \ref{oplmm}, \ref{cep}, \ref{aep}, and~\ref{abcep}, we obtain the following result.
\begin{cor}\label{depcor}
As $\ve \to 0$, $D_\ve$ is $o_P$ of $A_\ve$, $C_\ve$, and $A_\ve+B_\ve+C_\ve$. 
\end{cor}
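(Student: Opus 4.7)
The plan is straightforward: to verify that $D_\ve$ is $o_P$ of each of the three quantities $A_\ve$, $C_\ve$, and $A_\ve+B_\ve+C_\ve$, I would show that each corresponding ratio tends to zero in probability by writing it as a product $D_\ve \cdot Y_\ve^{-1}$ and applying Lemma \ref{oplmm}. Corollary \ref{dep} already establishes that $D_\ve = o_P(\ve)$, while Lemmas \ref{cep}, \ref{aep}, and \ref{abcep} each give a matching $O_P(\ve^{-1})$ bound on the reciprocal of the denominator of interest. So the product rule for $o_P$ and $O_P$ produces $o_P(\ve \cdot \ve^{-1}) = o_P(1)$, which is exactly the desired conclusion.

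Concretely, I would proceed in three nearly identical steps. First, combine Corollary \ref{dep} with Lemma \ref{cep} via Lemma \ref{oplmm} to obtain
\[
\frac{D_\ve}{C_\ve} = D_\ve \cdot C_\ve^{-1} = o_P(\ve) \cdot O_P(\ve^{-1}) = o_P(1),
\]
so $D_\ve = o_P(C_\ve)$. Second, combine Corollary \ref{dep} with Lemma \ref{aep} in the same way to conclude $D_\ve/A_\ve = o_P(1)$, hence $D_\ve = o_P(A_\ve)$. Third, combine Corollary \ref{dep} with Lemma \ref{abcep} to get $D_\ve/(A_\ve+B_\ve+C_\ve) = o_P(1)$, hence $D_\ve = o_P(A_\ve+B_\ve+C_\ve)$.

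There is essentially no obstacle remaining at this stage; the genuine content has been absorbed into the preceding lemmas. In particular, the hard part was establishing Corollary \ref{dep}, which in turn leaned on the Taylor expansion in Lemma \ref{deplmm} together with the quadratic lower bound on $\phi(u)-\bar u$ from Lemma \ref{philower} (which is where strict Edwards--Wilkinson domination and $\hess\phi(0)\ne 0$ are used). The reciprocal bounds of Lemmas \ref{cep}, \ref{aep}, \ref{abcep} in turn relied on the absolute continuity of the noise law to control small values of $C_\ve$, $A_\ve$, and $A_\ve+B_\ve+C_\ve$. With all of these in hand, the present corollary is just an application of the calculus of $o_P$ and $O_P$ contained in Lemma \ref{oplmm}.
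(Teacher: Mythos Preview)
Your proposal is correct and matches the paper's approach exactly: the paper simply states that the corollary follows by combining Corollary~\ref{dep} with Lemmas~\ref{oplmm}, \ref{cep}, \ref{aep}, and~\ref{abcep}, which is precisely the product-rule argument you spell out.
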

We now have all the ingredients for the proofs of Theorem \ref{kpzprob} and Theorem \ref{oepthm}.
\begin{proof}[Proof of Theorem \ref{kpzprob}]
Fixing $(t,x)\in \rr_{>0}\times \rr^d$, define $t_\ve := \lceil \alpha(\ve)^{-1} t\rceil$ and $x_\ve := \lceil \beta(\ve)^{-1} x\rceil$. Note that since $t>0$ and $\alpha(\ve)>0$, we have that $t_\ve \ge 1$ for any~$\ve$ (this is why we use ceiling instead of floor). Now, observe that 
\begin{align*}
\lceil\alpha(\ve)^{-1}(t+\alpha(\ve))\rceil = \lceil \alpha(\ve)^{-1} t + 1\rceil = \lceil \alpha(\ve)^{-1}t \rceil + 1 = t_\ve + 1.
\end{align*}
Similarly, for any $a\in A$,
\begin{align*}
\lceil \beta(\ve)^{-1}(x + \beta(\ve) a)\rceil = x_\ve + a. 
\end{align*}
This implies that
\begin{align*}
\tilde{\partial}_t f^{(\ve)}(t,x)  &= \frac{\gamma(\ve) }{\alpha(\ve)} (f_\ve(t_\ve+1, x_\ve) - f(t_\ve,x_\ve)).
\end{align*}
Similarly, note that
\begin{align*}
\tilde{\Delta} f^{(\ve)}(t,x) &= \frac{(2d+1)\gamma(\ve)}{\beta(\ve)^2}(\bar{f}_\ve(t_\ve, x_\ve) - f_\ve(t_\ve, x_\ve)) \\
&= \frac{(2d+1)\gamma(\ve)}{\beta(\ve)^2}A_\ve,
\end{align*}
and 
\begin{align*}
|\tilde{\nabla} f^{(\ve)}(t,x)|^2 &= \frac{\gamma(\ve)^2}{2\beta(\ve)^2} \sum_{a\in A} (f_\ve(t_\ve,x_\ve + a) - \bar{f}_\ve(t_\ve, x_\ve))^2\\
&= \frac{\gamma(\ve)^2}{(q-r)\beta(\ve)^2} B_\ve. 
\end{align*}
Let $\xi^{(\ve)}$ be defined as in equation \eqref{xidef}. Then note that
\begin{align*}
\xi^{(\ve)}(t,x) &= \sigma^{-1}\alpha(\ve)^{-1/2}\beta(\ve)^{-d/2} \ve^{-1} C_\ve. 
\end{align*}
Finally, let
\[
R^{(\ve)}(t,x) := \frac{\gamma(\ve)}{\alpha(\ve)}D_\ve. 
\]
Using all of the above, and the definition of $D_\ve$, we get
\begin{align*}
\tilde{\partial}_t f^{(\ve)}(t,x) &= \frac{\gamma(\ve) }{\alpha(\ve)}(A_\ve+B_\ve+C_\ve+D_\ve)\\
&= \frac{\beta(\ve)^2}{(2d+1)\alpha(\ve)}\tilde{\Delta} f^{(\ve)}(t,x)  + \frac{(q-r)\beta(\ve)^2}{\alpha(\ve)\gamma(\ve)}|\tilde{\nabla} f^{(\ve)}(t,x)|^2 \\
&\qquad + \frac{\sigma \ve \beta(\ve)^{d/2}\gamma(\ve)}{\alpha(\ve)^{1/2}}\xi^{(\ve)}(t,x) + R^{(\ve)}(t,x). 
\end{align*}
By Lemma \ref{deplmm} and Corollary~\ref{depcor}, $D_\ve$ is $o_P$ of $A_\ve$, $B_\ve$, $C_\ve$, and $A_\ve + B_\ve + C_\ve$. By Proposition \ref{whiteprop}, $\xi^{(\ve)}$ converges in law to white noise as $\ve \to 0$. This completes the proof. 
\end{proof}
\begin{proof}[Proof of Theorem \ref{oepthm}]
Note that for any $a\in A$, 
\begin{align*}
&\sum_{b\in A} (f_\ve(t_\ve, x_\ve+b) - \bar{f}_\ve(t_\ve,x_\ve))^2 \\
&= \frac{1}{4d+2}\sum_{b,c\in A} (f_\ve(t_\ve, x_\ve+b) - f_\ve(t_\ve, x_\ve +c))^2\\
&\ge \frac{1}{4d+2}(f_\ve(t_\ve, x_\ve+a) - f_\ve(t_\ve, x_\ve))^2,
\end{align*}
and apply Lemma \ref{blmm} and the fact that $q\ne r$. 
\end{proof}

Next, let us prove Theorem \ref{tightthm}. The proof requires the following lemmas.
\begin{lmm}\label{tightlmm0}
A sequence of $(\rr^d)^{\zz^d}$-valued random variables $\{f_n\}_{n\ge 1}$  is tight if and only if $\{f_n(x)\}_{n\ge 1}$ is a tight family of $\rr^d$-valued random variables for every $x\in \zz^d$.
\end{lmm}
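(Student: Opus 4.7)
The plan is to exploit the fact that $(\R^d)^{\Z^d}$ with the product topology is Polish (being a countable product of Polish spaces) and that, by Tychonoff's theorem, a product $\prod_{x \in \Z^d} K_x$ of compact subsets $K_x \subseteq \R^d$ is compact. In fact, since the product is countable, a subset of $(\R^d)^{\Z^d}$ is relatively compact if and only if it is contained in such a product. This is the structural fact that drives both implications.

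For the forward direction, suppose $\{f_n\}$ is tight. Given $\varepsilon > 0$, pick a compact $K \subseteq (\R^d)^{\Z^d}$ with $\P(f_n \in K) \ge 1 - \varepsilon$ for every $n$. For each $x \in \Z^d$, the coordinate projection $\pi_x \colon (\R^d)^{\Z^d} \to \R^d$ is continuous, so $\pi_x(K)$ is a compact subset of $\R^d$, and $\P(f_n(x) \in \pi_x(K)) \ge \P(f_n \in K) \ge 1 - \varepsilon$. This gives tightness of $\{f_n(x)\}$.

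For the reverse direction, fix an enumeration $\Z^d = \{x_1, x_2, \ldots\}$. Given $\varepsilon > 0$, use coordinatewise tightness to choose, for each $i \ge 1$, a compact set $K_i \subseteq \R^d$ satisfying $\P(f_n(x_i) \notin K_i) \le \varepsilon\, 2^{-i}$ for every $n$. Let
\[
K := \prod_{i \ge 1} K_i \subseteq (\R^d)^{\Z^d},
\]
which is compact by Tychonoff's theorem. A union bound then gives
\[
\P(f_n \notin K) \le \sum_{i \ge 1} \P(f_n(x_i) \notin K_i) \le \sum_{i \ge 1} \varepsilon\, 2^{-i} = \varepsilon,
\]
for every $n$, establishing tightness of $\{f_n\}$.

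There is no genuine obstacle here; the only point that requires mild care is to justify the description of compact subsets of $(\R^d)^{\Z^d}$ via Tychonoff and to exploit the countability of $\Z^d$ to form a summable sequence of error budgets $\varepsilon\, 2^{-i}$. The argument is essentially the standard tightness criterion for probability measures on a countable product of Polish spaces, specialized to the present setting.
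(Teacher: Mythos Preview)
Your proof is correct and follows essentially the same approach as the paper: both use continuity of the coordinate projections for the forward direction, and for the converse both choose compact sets $K_x\subseteq\rr^d$ with summable error budgets and appeal to Tychonoff to conclude that $\prod_x K_x$ is compact. The only cosmetic difference is that you enumerate $\zz^d$ and use weights $2^{-i}$, whereas the paper uses weights $2^{-|x|}$ directly (incurring a harmless constant $C=\sum_{x\in\zz^d}2^{-|x|}$).
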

\begin{proof}
If $\{f_n\}_{n\ge 1}$ is a tight family, then the continuity of the projection $f\mapsto f(x)$ shows that for any $x$, $\{f_n(x)\}_{n\ge 1}$ is a tight family. Conversely, suppose that $\{f_n(x)\}_{n\ge 1}$ is a tight family for each $x$. Fix some $\delta>0$. Then for every $x$, there is a compact set $K_x\subseteq \rr^d$ such that $\pp(f_n(x)\notin K_x)\le 2^{-|x|}\delta$ for all $n$. Let $K := \prod_{x\in \zz^d} K_x$. Then $K$ is a compact set under the product topology, and for any $n$, 
\[
\pp(f_n \notin K) \le \sum_{x\in \zz^d} \pp(f_n(x)\notin K_x) \le C\delta,
\]
where $C$ does not depend on $n$. This completes the proof.
\end{proof}

\begin{lmm}\label{tightlmm}
Under the hypotheses of Theorem \ref{tightthm}, the sequence $\{\delta f_\ve(t,\cdot)\}_{t\in \zz_{\ge0}}$ is a tight family.
\end{lmm}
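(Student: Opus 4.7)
The plan is to reduce the tightness question on the product space $(\rr^d)^{\zz^d}$ to tightness of the marginal at each site via Lemma \ref{tightlmm0}, and then use Corollary \ref{gradcor} together with the additional coercivity hypothesis of Theorem \ref{tightthm} to control the local fluctuation vector $q_\ve(t,x) := (f_\ve(t, x+a) - \bar{f}_\ve(t,x))_{a\in A}$ uniformly in $t$.

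Fix $x \in \zz^d$. Since $\delta_i f_\ve(t,x) = f_\ve(t, x+e_i) - f_\ve(t,x) = q_\ve(t,x)_{e_i} - q_\ve(t,x)_0$, we have $|\delta f_\ve(t,x)| \le 2|q_\ve(t,x)|$, so it suffices to prove tightness of $\{q_\ve(t,x)\}_{t\ge 0}$ in $\rr^A$. Note that $q_\ve(t,x)$ lies in the hyperplane $H := \{u \in \rr^A : \bar u = 0\}$, and by equivariance of $\phi$ we have
\[
\phi((f_\ve(t,x+a))_{a\in A}) - \bar{f}_\ve(t,x) = \phi(q_\ve(t,x)).
\]
By Edwards--Wilkinson domination, $\phi \ge 0$ on $H$, so Corollary \ref{gradcor} reads $\ee(\phi(q_\ve(t,x))) \le B\ve$, and Markov's inequality yields
\[
\pp(\phi(q_\ve(t,x)) > M) \le \frac{B\ve}{M}\qquad \text{for every } M>0.
\]

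Next, I invoke the extra hypothesis of Theorem \ref{tightthm}: if $|u_n - \bar u_n 1| \to \infty$ then $\phi(u_n) - \bar u_n \to \infty$. Restricted to $H$, this says exactly that $\phi|_H$ is coercive. Hence for each $M > 0$ there exists $R(M) < \infty$ such that $u \in H$ and $\phi(u) \le M$ imply $|u| \le R(M)$; otherwise one would obtain a sequence in $H$ along which $|u_n|\to\infty$ while $\phi(u_n)\le M$, contradicting coercivity. Applying this to $q_\ve(t,x) \in H$ gives
\[
\pp(|q_\ve(t,x)| > R(M)) \le \pp(\phi(q_\ve(t,x)) > M) \le \frac{B\ve}{M},
\]
and the right-hand side tends to $0$ as $M\to\infty$ (with $\ve$ fixed). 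Therefore $\{q_\ve(t,x)\}_{t\ge 0}$ is tight in $\rr^A$, and consequently $\{\delta f_\ve(t, x)\}_{t\ge 0}$ is tight in $\rr^d$ for each fixed $x$. Applying Lemma \ref{tightlmm0} then yields tightness of $\{\delta f_\ve(t,\cdot)\}_{t\in \zz_{\ge 0}}$ in $(\rr^d)^{\zz^d}$.

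There is no serious obstacle here: Corollary \ref{gradcor} already supplies the $L^1$ bound on $\phi(q_\ve)$, the coercivity hypothesis is exactly what is needed to convert this into control in probability of $|q_\ve|$, and the passage from pointwise tightness to tightness on the product space is handled by Lemma \ref{tightlmm0}. The only delicate point to be careful about is recognizing that the hypothesis ``$|u_n - \bar u_n 1|\to\infty \Rightarrow \phi(u_n)-\bar u_n \to\infty$'' is equivalent to the coercivity of $\phi|_H$, which is immediate once one uses equivariance to reduce to the hyperplane.
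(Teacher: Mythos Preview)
Your proof is correct and follows essentially the same route as the paper's: reduce to pointwise tightness via Lemma \ref{tightlmm0}, use Corollary \ref{gradcor} and Edwards--Wilkinson domination to get $\ee(\phi(q_\ve(t,x)))\le B\ve$, apply Markov's inequality, and then invoke the coercivity hypothesis of Theorem \ref{tightthm} on the hyperplane $H$ to convert this into a tail bound on $|q_\ve(t,x)|$, from which tightness of $\delta f_\ve(t,x)$ follows. The only cosmetic difference is that you bound $|\delta f_\ve(t,x)|$ directly by writing $\delta_i f_\ve(t,x)=q_\ve(t,x)_{e_i}-q_\ve(t,x)_0$, whereas the paper cites the inequality from the proof of Theorem \ref{oepthm}; the constant $2$ you write should really be a $d$-dependent constant in Euclidean norm, but this is immaterial for tightness.
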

\begin{proof}
By Lemma \ref{tightlmm0}, it suffices to prove that for each $x$, $\{\delta f_\ve(t,x)\}_{t\in \zz_{\ge 0}}$ is a tight family of random vectors. For this, it is necessary and sufficient to have that $\{\delta_i f_\ve(t,x)\}_{t\in \zz_{\ge 0}}$ is a tight family of real-valued random variables. Fix some $x$ and~$i$. By Corollary \ref{gradcor}, $\ee(\phi(q_\ve(t,x)))\le B\ve$, where
\begin{align*}
q_\ve(t,x) := (f_\ve(t, x+a)- \bar{f}_\ve(t,x))_{a\in A}. 
\end{align*}
Note that $q_\ve(t,x)\in H$, where $H:= \{u\in \rr^A: \bar{u}=0\}$. By Edwards--Wilkinson domination, $\phi(u)\ge 0$ for all $u\in H$. Moreover, by the additional condition of Theorem \ref{tightthm}, we have that for any $K>0$, there is some $L>0$ such that if $u\in H$ and $|u|>L$, then $\phi(u)>K$. Thus,
\begin{align*} 
\pp(|q_\ve(t,x)| >L) \le \pp(\phi(q_\ve(t,x)) > K) &\le \frac{\ee(\phi(q_\ve(t,x)))}{K} \le \frac{B\ve}{K}. 
\end{align*}
By the inequality displayed in the proof of Theorem \ref{oepthm}, this proves the tightness of $\{\delta_i f_\ve(t,x)\}_{t\in \zz_{\ge0}}$. 
\end{proof}

\begin{lmm}\label{markovlmm}
The sequence $\{\delta f_\ve(t,\cdot)\}_{t\in \zz_{\ge0}}$ is a time-homogeneous Markov chain.
\end{lmm}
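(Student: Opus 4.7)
The plan is to exhibit an explicit, time-independent measurable functional $F$ on $(\rr^d)^{\zz^d}\times\rr^{\zz^d}$ such that
\begin{align*}
\delta f_\ve(t+1,\cdot) = F\bigl(\delta f_\ve(t,\cdot),\ (z_{t+1,x})_{x\in\zz^d}\bigr).
\end{align*}
Because the collection $(z_{t+1,x})_{x\in\zz^d}$ is independent of the $\sigma$-algebra generated by the noise up to time $t$ (which contains $\delta f_\ve(s,\cdot)$ for every $s\le t$), the existence of such an $F$ delivers both the Markov property and its time-homogeneity in one stroke.

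The first ingredient is a bookkeeping observation: for any $x\in\zz^d$ and any $a\in A$, the difference $f_\ve(t,x+a)-f_\ve(t,x)$ is a deterministic, time-independent, measurable function of $\delta f_\ve(t,\cdot)$ alone. Indeed, the difference is $0$ when $a=0$, equals $\delta_i f_\ve(t,x)$ when $a=e_i$, and equals $-\delta_i f_\ve(t,x-e_i)$ when $a=-e_i$. Denote this function by $h_a(\delta f_\ve(t,\cdot),x)$. Now apply the equivariance of $\phi$ under constant shifts with constant $c=f_\ve(t,x)$ to rewrite the growth rule \eqref{growthmodel} as
\begin{align*}
f_\ve(t+1,x) = f_\ve(t,x) + \phi\bigl((h_a(\delta f_\ve(t,\cdot),x))_{a\in A}\bigr) + \ve z_{t+1,x}.
\end{align*}
Subtracting this identity at $x$ from the one at $x+e_i$ yields
\begin{align*}
\delta_i f_\ve(t+1,x) &= \delta_i f_\ve(t,x) + \phi\bigl((h_a(\delta f_\ve(t,\cdot),x+e_i))_{a\in A}\bigr) \\
&\qquad - \phi\bigl((h_a(\delta f_\ve(t,\cdot),x))_{a\in A}\bigr) + \ve\bigl(z_{t+1,x+e_i}-z_{t+1,x}\bigr),
\end{align*}
which depends only on $\delta f_\ve(t,\cdot)$ and on the fresh noise at time $t+1$, in a manner that does not involve $t$ explicitly. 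Reading this as the $(i,x)$-coordinate of $F$ supplies the functional we sought.

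There is essentially no serious obstacle: the whole argument is driven by the constant-shift equivariance of $\phi$, which is exactly the property that strips $f_\ve(t,x)$ out of $\phi$ and replaces its arguments by finite differences. The only residual subtlety is the joint measurability of $F$ on the Polish space $(\rr^d)^{\zz^d}\times\rr^{\zz^d}$ endowed with the product topology, but this is immediate because each coordinate of $F$ depends on only finitely many coordinates of its two arguments and $\phi$ is continuous.
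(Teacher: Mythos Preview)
Your proof is correct and follows essentially the same route as the paper: use equivariance to rewrite $\phi((f_\ve(t,x+a))_{a\in A})$ as $f_\ve(t,x)+\phi((f_\ve(t,x+a)-f_\ve(t,x))_{a\in A})$, subtract the resulting identities at $x+e_i$ and $x$, and observe that all the surviving terms are functions of $\delta f_\ve(t,\cdot)$ and the fresh noise at time $t+1$. Your write-up is slightly more explicit about expressing $f_\ve(t,x+a)-f_\ve(t,x)$ in terms of the coordinates of $\delta f_\ve(t,\cdot)$ and about the measurability of $F$, but the argument is the same.
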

\begin{proof}
Note that by the equivariance property of $\phi$,
\begin{align*}
&f_\ve(t+1,x+e_i) - f_\ve(t+1,x) \\
&= \phi((f_\ve(t,x+e_i +a))_{a\in A}) + \ve z_{t+1,x+e_i} \\
&\qquad \qquad - \phi((f_\ve(t,x+a))_{a\in A}) - \ve z_{t+1,x}\\
&= \phi((f_\ve(t,x+e_i +a) - f_\ve(t, x+e_i))_{a\in A}) - \phi((f_\ve(t,x+a) - f_\ve(t,x))_{a\in A}) \\
&\qquad f_\ve(t,x+e_i) - f_\ve(t,x) + \ve z_{t+1,x+e_i}  - \ve z_{t+1,x}.
\end{align*}
This shows that $\delta f_\ve(t+1,\cdot)$ is a function of $\delta f_\ve(t,\cdot)$ and $\{z_{t+1,x}\}_{x\in \zz^d}$, from which it is clear that $\{\delta f_\ve(t,\cdot)\}_{t\in \zz_{\ge0}}$ is a time-homogeneous Markov chain. 
\end{proof}
Let $T$ denote the transition kernel of the Markov chain from Lemma~\ref{markovlmm}. That is, for a probability measure $\mu$ on $(\rr^d)^{\zz^d}$, $T\mu$ denotes the probability law after taking one step from the chain if the initial state has law $\mu$. 
\begin{lmm}\label{tcontlmm}
The map $T$ defined above is continuous on the space of probability measures on $(\rr^d)^{\zz^d}$ under the topology of weak convergence.
\end{lmm}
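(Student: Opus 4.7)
The plan is to realize the one-step evolution as a deterministic continuous map applied to the current state together with an independent noise field, and then invoke the continuous mapping theorem for weak convergence.

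From the identity displayed in the proof of Lemma \ref{markovlmm}, writing $u \in (\rr^d)^{\zz^d}$ for the current gradient state $\delta f_\ve(t,\cdot)$ and $z \in \rr^{\zz^d}$ for the fresh noise $\{z_{t+1,x}\}_{x\in\zz^d}$, the $i$-th component of the next state at location $x$ equals
\begin{align*}
u_i(x) + \phi\bigl((G_a(u, x+e_i))_{a\in A}\bigr) - \phi\bigl((G_a(u, x))_{a\in A}\bigr) + \ve(z_{x+e_i} - z_x),
\end{align*}
where $G_a(u,y) := f_\ve(t, y+a) - f_\ve(t, y)$ can, for each fixed $a\in A$, be written as a single coordinate of $u$ (or its negative at a shifted site, or zero); for $a = e_j$, $G_a(u,y) = u_j(y)$, for $a=-e_j$, $G_a(u,y) = -u_j(y-e_j)$, and for $a=0$, $G_a(u,y) = 0$. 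Thus there is a deterministic map $F:(\rr^d)^{\zz^d}\times \rr^{\zz^d}\to (\rr^d)^{\zz^d}$ with $\delta f_\ve(t+1,\cdot) = F\bigl(\delta f_\ve(t,\cdot),\ \{z_{t+1,x}\}_{x\in\zz^d}\bigr)$.

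Next I would verify that $F$ is continuous when each factor carries the product topology. For each pair $(i,x)$, the coordinate projection $\pi_{(i,x)}\circ F$ depends on only finitely many coordinates of $(u,z)$, and the dependence is continuous since $\phi$ is continuous. Since continuity of a map into a product space is equivalent to continuity of every coordinate projection, $F$ is continuous on the Polish space $(\rr^d)^{\zz^d}\times \rr^{\zz^d}$.

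Let $\nu$ denote the i.i.d.\ product law on $\rr^{\zz^d}$ whose one-dimensional marginal is the law of the noise variables. Then for any probability measure $\mu$ on $(\rr^d)^{\zz^d}$, we have $T\mu = F_*(\mu\otimes \nu)$, the pushforward of the independent product $\mu\otimes\nu$ under $F$. If $\mu_n\to\mu$ weakly, then $\mu_n\otimes \nu\to \mu\otimes\nu$ weakly on the Polish product space (the standard fact that independent products are weakly continuous in one factor when the other is fixed), and the continuous mapping theorem applied to $F$ gives $T\mu_n = F_*(\mu_n\otimes\nu)\to F_*(\mu\otimes\nu)=T\mu$ weakly. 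The only mildly delicate point is the continuity of $F$ in the product topology, which rests on the finite-range structure of the update rule together with continuity of $\phi$; once that is in hand, the rest is a routine application of the continuous mapping theorem, so there is no real obstacle.
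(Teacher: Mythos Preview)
Your proof is correct and follows essentially the same approach as the paper: both realize the one-step evolution as a continuous map applied to the pair (current state, fresh noise) and then pass to the limit via the continuous mapping theorem. Your version is in fact more explicit than the paper's, which simply asserts the existence of the continuous map $\Phi$ without writing out the coordinate formula or the finite-range argument for continuity.
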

\begin{proof}
Let $\Psi$ be a bounded continuous function from $(\rr^d)^{\zz^d}$ into $\rr$. Let $\{\mu_n\}_{n\ge 1}$ be a sequence of probability measures on $(\rr^d)^{\zz^d}$ converging weakly to a probability measure $\mu$. Let $\nu_n := T\mu_n$ and $\nu := T\mu$. For each $n$, let $f_n$ be a $(\rr^d)^{\zz^d}$-valued random variable with law $\mu_n$. Let $z := \{z_x\}_{x\in \zz^d}$ be a collection of i.i.d.~random variables having the same law as our noise variables, independent of the $f_n$'s. Then, since $\phi$ is differentiable everywhere --- and hence, continuous --- it is not hard to see that there is a continuous function $\Phi: (\rr^d)^{\zz^d}\times \rr^d\to (\rr^d)^{\zz^d}$ such that $\Phi(f_n, z)$ has law $\nu_n$. 

Now, note that $(f_n,z)$ converges in law to $(f,z)$, where $f$ has law $\mu$ and is independent of $z$. Since $\Psi\circ \Phi$ is a bounded continuous function, this implies that
\begin{align*}
\int \Psi d\nu_n &= \ee(\Psi(\Phi(f_n, z))) \to \ee(\Psi(\Phi(f,z)) = \int \Psi d\nu,
\end{align*}
Thus, $\nu_n\to \nu$ weakly, which completes the proof.
\end{proof}  
We are now ready to prove Theorem \ref{tightthm}.
\begin{proof}[Proof of Theorem \ref{tightthm}]
Let $\gamma_t$ be the law of $\delta f_\ve(t,\cdot)$. Define
\[
\mu_t := \frac{1}{t}\sum_{s=0}^{t-1}\gamma_s.
\]
By Lemma \ref{tightlmm}, $\{\gamma_t\}_{t\in \zz_{\ge 0}}$ is a tight family. From this, it follows that $\{\mu_t\}_{t\in \zz_{\ge0}}$ is also a tight family. Therefore, by Prokhorov's theorem, it has a weakly convergent subsequence. Passing to this subsequence if necessary, let us assume that $\mu_t$ converges weakly to some $\mu$. We claim that $\mu$ is an invariant probability measure for the Markov kernel $T$. To see this, let $\nu_t := T \mu_t$ and $\lambda_t := T \gamma_t$. Let $\Psi :(\rr^d)^{\zz^d}\to \rr$ be a bounded continuous function. Then by the linearity of $T$, 
\begin{align*}
\int \Psi d\nu_t &= \frac{1}{t}\sum_{s=0}^{t-1} \int \Psi d\lambda_s. 
\end{align*}
But for each $t$, $\lambda_t = \gamma_{t+1}$. Thus, 
\begin{align*}
\int \Psi d\nu_t &= \frac{1}{t}\sum_{s=0}^{t-1} \int \Psi d\gamma_{s+1}\\
&= \frac{1}{t}\sum_{s=0}^{t-1} \int \Psi d\gamma_{s} + \frac{1}{t}\biggl(\int \Psi d\gamma_t - \int\Psi d\gamma_0\biggr)\\
&= \int\Psi d\mu_t  + \frac{1}{t}\biggl(\int \Psi d\gamma_t - \int\Psi d\gamma_0\biggr).
\end{align*}
By the boundedness of $\Psi$, the second term on  the right goes to zero as $t\to \infty$. By assumption, $\mu_t \to \mu$, and so by Lemma \ref{tcontlmm}, $\nu_t \to \nu := T\mu$. Combining, we get that $\int \Psi d\nu = \int\Psi d\mu$. Since $\Psi$ is an arbitrary bounded continuous function, this shows that $\nu=\mu$. Thus, $\mu$ is an invariant probability measure for the kernel~$T$. The translation invariance of $\mu$ follows from the translation invariance of each $\gamma_t$. 
\end{proof}

Next, let us prove Lemma \ref{convlmm}. We need the following lemma.
\begin{lmm}\label{convsupport}
Suppose that the hypotheses of Lemma \ref{convlmm} hold. Then there exists $\delta>0$, depending only $\phi$, such that for any $u\in\rr^A$ with $\bar{u}=0$, we have
\[
\phi(u) \ge \frac{1}{4}(q-r)|u| \min\{\delta, |u|\},
\]
where $q$ and $r$ are as in Lemma \ref{hess}. Moreover, we have that $q>r$.
\end{lmm}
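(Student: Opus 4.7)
The plan is to first identify the quadratic form that $\phi$ reduces to on the hyperplane $H := \{u \in \rr^A : \bar{u} = 0\}$, then use Taylor expansion to establish the bound in a small neighborhood of the origin, and finally use convexity (together with the bound $\phi(u) \ge \bar{u}$) to extend the bound to all of $H$.

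First I would compute $u \cdot \hess \phi(0) u$ for $u \in H$. By Lemma \ref{hess}, $\hess \phi(0)$ has diagonal entries $q$ and off-diagonal entries $r$, so
\[
u \cdot \hess \phi(0) u = q |u|^2 + r \sum_{a \ne b} u_a u_b = q|u|^2 + r\bigl((1 \cdot u)^2 - |u|^2\bigr) = (q - r)|u|^2,
\]
since $1 \cdot u = (2d+1)\bar{u} = 0$ on $H$. Combined with Lemma \ref{nabla} (which gives $\nabla\phi(0)\cdot u = 0$ on $H$) and $\phi(0) = 0$, Taylor's theorem yields
\[
\phi(u) = \tfrac{1}{2}(q-r)|u|^2 + o(|u|^2) \quad \text{as } |u| \to 0, \ u \in H.
\]
By the Edwards--Wilkinson domination assumption, $\phi(u) \ge \bar{u} = 0$ for $u \in H$, so dividing the expansion by $|u|^2$ and sending $|u| \to 0$ forces $q - r \ge 0$. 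Lemma \ref{hess} guarantees $q \ne r$, hence $q > r$.

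Next, using the same Taylor expansion, I would choose $\delta > 0$ small enough that the $o(|u|^2)$ error is dominated by $\tfrac{1}{4}(q-r)|u|^2$ whenever $|u| \le \delta$, so that
\[
\phi(u) \ge \tfrac{1}{4}(q-r)|u|^2 = \tfrac{1}{4}(q-r)|u|\min\{\delta,|u|\} \quad \text{for } u \in H, \ |u| \le \delta.
\]
In particular, for any $v \in H$ with $|v| = \delta$, we have $\phi(v) \ge \tfrac{1}{4}(q-r)\delta^2$.

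Finally, for $u \in H$ with $|u| > \delta$, I would invoke convexity. Set $s := \delta/|u| \in (0,1)$ and note that $su \in H$ with $|su| = \delta$. Convexity of $\phi$ (restricted to $H$, which inherits convexity) and $\phi(0) = 0$ give
\[
\phi(su) = \phi\bigl((1-s)\cdot 0 + s \cdot u\bigr) \le s\,\phi(u),
\]
so that
\[
\phi(u) \ge \frac{\phi(su)}{s} \ge \frac{1}{s} \cdot \tfrac{1}{4}(q-r)\delta^2 = \tfrac{1}{4}(q-r)|u|\delta = \tfrac{1}{4}(q-r)|u|\min\{\delta,|u|\}.
\]
Combining the two cases completes the proof. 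The only subtle step is the extension beyond the Taylor regime, where convexity is essential; without it, the growth of $\phi$ on $H$ away from $0$ could in principle be arbitrarily slow, and no such uniform linear-in-$|u|$ lower bound would be available.
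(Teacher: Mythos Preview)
Your proof is correct and follows essentially the same route as the paper: compute the Hessian quadratic form on $H$, establish $q>r$, get the quadratic lower bound near the origin via Taylor expansion, then push it out to all of $H$ using convexity and $\phi(0)=0$. One small correction: you invoke ``the Edwards--Wilkinson domination assumption'' to conclude $\phi(u)\ge 0$ on $H$, but EW domination is not among the hypotheses of Lemma~\ref{convlmm} (indeed, Lemma~\ref{convlmm} is precisely the claim that convexity \emph{implies} it), so this is circular as written; the paper instead uses convexity directly, observing that $\hess\phi(0)$ is positive semidefinite so $(q-r)|u|^2=u\cdot\hess\phi(0)u\ge 0$, and you could equally well note that convexity gives $\phi(u)\ge\phi(0)+\nabla\phi(0)\cdot u=\bar u=0$ on $H$.
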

\begin{proof}
By Lemma \ref{hess} (whose proof does not use Edwards--Wilkinson domination), we have that for any $u$ with $\bar{u}=0$, 
\begin{align}\label{hessid}
u\cdot \hess\phi(0) u = (q-r) |u|^2.
\end{align}  
Since $\hess\phi(0)$ is positive semidefinite due to the convexity of $\phi$, this immediately shows that $q\ge r$. By Lemma \ref{hess}, $q\ne r$. Thus, $q>r$.  

In the following, $|M|$ denotes the Euclidean norm of a matrix $M$ --- that is, the square-root of the sum of squares of the entries. Since $\phi$ is $C^2$ in a neighborhood of the origin and $q>r$, there exists $\delta$ small enough such that $\phi$ is $C^2$ in the open ball of radius $2\delta$ centered at the origin, and $|\hess \phi(u)- \hess \phi(0)|< (q-r)/2$ for all $u$ in this ball. 

Take any $u\in \rr^A$ such that $\bar{u} = 0$ and $|u|\le \delta$. For $t\in [0,1]$, let $g(t) := \phi(tu)$. Then 
\begin{align*}
\phi(u) &= g(1) = g(0) + g'(0) + \int_0^1 (1-t) g''(t) dt.
\end{align*}
Now, $g(0)=\phi(0)=0$, and by Lemma \ref{nabla}, $g'(0) = \nabla \phi(0)\cdot u = \bar{u}=0$. By definition of $g$, $g''(t) = u\cdot \hess \phi(tu) u$. Inserting these into the above expression, we get
\begin{align}\label{phiform}
\phi(u) &= \int_0^1 (1-t) (u\cdot \hess \phi(tu) u) dt. 
\end{align}
Now, for all $t\in [0,1]$, an application of the Cauchy--Schwarz inequality gives
\begin{align*}
|u\cdot \hess\phi(tu) u - u\cdot \hess\phi(0) u| &\le |\hess \phi(tu) - \hess\phi(0)| |u|^2 \\
&\le \frac{1}{2}(q-r)|u|^2. 
\end{align*}
By \eqref{hessid} and the above inequality, we have that for all $t\in [0,1]$,
\begin{align}\label{hessineq}
u\cdot \hess\phi(tu) u \ge \frac{1}{2}(q-r)|u|^2.
\end{align}
By \eqref{phiform}, this gives
\begin{align}\label{phiu1}
\phi(u) \ge \frac{1}{4}(q-r)|u|^2. 
\end{align}
Next, suppose that $|u| >\delta$. Let $v := \alpha u$, where $\alpha := \delta/|u|$. Then $\bar{v}=0$ and $|v| = \delta$. Thus, by \eqref{phiu1},
\begin{align}\label{phiv}
\phi(v) \ge \frac{1}{4}(q-r)|v|^2 = \frac{1}{4} (q-r) \delta^2. 
\end{align}
But, by the convexity of $\phi$,
\[ 
\phi(v) = \phi(\alpha u) \le \alpha \phi(u) + (1-\alpha) \phi(0) = \alpha \phi(u).
\]
Thus, by \eqref{phiv}, 
\begin{align}\label{phiu2}
\phi(u) \ge \alpha^{-1}\phi(v) \ge \frac{1}{4}(q-r)\delta|u|. 
\end{align}
Combining \eqref{phiu1} and \eqref{phiu2} completes the proof of the lemma.
\end{proof}
We are now ready to complete the proof of Lemma \ref{convlmm}. 
\begin{proof}[Proof of Lemma \ref{convlmm}]
By convexity, $\phi(u) - \phi(0) - \nabla \phi(0) \cdot u \ge 0$ for all $u$. But $\phi(0)=0$, and by Lemma \ref{nabla} (which uses only equivariance, monotonicity, and symmetry in its proof), $\nabla \phi(0) = (2d+1)^{-1}1$. Thus, $\phi(u)-\bar{u}\ge 0$ for all $u$. This proves Edwards--Wilkinson domination. To prove strict domination, let $\{u_n\}_{n\ge 1}$ be a sequence such that $\phi(u_n) -\bar{u}_n \to 0$. Let $v_n := u_n - \bar{u}_n1$, so that $\bar{v}_n =0$ for all $n$ and $\phi(v_n)\to 0$. Then by Lemma~\ref{convsupport}, we have that $v_n \to 0$, which means  that $u_n - \bar{u}_n 1 \to 0$. Finally, by Lemma \ref{convsupport}, we see immediately that the extra condition of Theorem \ref{tightthm} is satisfied.  
\end{proof}

Finally, let us prove Theorem \ref{genthm}.

\begin{proof}[Proof of Theorem \ref{genthm}]
The claims follow from Theorem \ref{kpzprob} and Theorem \ref{tightthm}, if we can just verify that $\phi$ satisfies the necessary conditions. 
It is easy to see that $\phi$ is equivariant under constant shifts, symmetric, and zero at the origin. A simple calculation shows that any mixed partial derivative of $\phi$ at the origin is equal to $C(d)c\psi''(0)$, where $C(d)$ is a nonzero constant depending only on $d$. Since $c>0$ and $\psi''(0)\ne 0$, this shows that $\phi$ satisfies the nondegeneracy condition. Next, note that 
\[
\fpar{\phi}{u_a} = \frac{1}{2d+1} + c \psi'(u_a - \bar{u}) - \frac{c}{2d+1} \sum_{b\in A} \psi'(u_b - \bar{u}).
\]
By the uniform boundedness of $|\psi'|$, the above expression shows that $\phi$ is monotone if we choose $c$ small enough --- specifically, if $c\le (4d|\psi'|_\infty)^{-1}$. Next, let us show that $\phi$ satisfies the strict Edwards--Wilkinson domination condition. Since $c>0$ and $\psi\ge0$ everywhere, we have that $\phi(u)\ge \bar{u}$ for all $u$. Next, take any sequence $\{u_n\}_{n\ge 1}$ such that $\phi(u_n)-\bar{u}_n \to 0$. Suppose that $\{u_n - \bar{u}_n1\}_{n\ge 1}$ is an unbounded sequence. Since $\psi(x)$ is bounded away from zero as $|x|\to \infty$ and $\psi\ge 0$ everywhere, this implies that $\phi(u_n)-\bar{u}_n$ cannot converge to zero, contradicting our hypothesis. Thus, $\{u_n - \bar{u}_n1\}_{n\ge 1}$ must be a bounded sequence. Since $\psi$ is continuous and nonnegative, and the only point where it is zero is the origin, we conclude that any convergent subsequence of $\{u_n - \bar{u}_n1\}_{n\ge 1}$ must converge to zero. Thus, $u_n - \bar{u}_n1\to 0$. This proves that $\phi$ satisfies the strict Edwards--Wilkinson domination condition. Finally, if $\psi(x)\to \infty$ as $|x|\to \infty$, then it is clear, by the nonnegativity of $\psi$, the function $\phi$ satisfies the extra condition of Theorem~\ref{tightthm}.
\end{proof}

\section*{Acknowledgements}
I thank Abdelmalek Abdesselam, Alex Dunlap, Herbert Spohn, Kevin Yang, and the anonymous referees for a number of useful comments and references. 

\bibliographystyle{plainnat}
\bibliography{myrefs}

\begin{thebibliography}{75}
\providecommand{\natexlab}[1]{#1}
\providecommand{\url}[1]{\texttt{#1}}
\expandafter\ifx\csname urlstyle\endcsname\relax
  \providecommand{\doi}[1]{doi: #1}\else
  \providecommand{\doi}{doi: \begingroup \urlstyle{rm}\Url}\fi

\bibitem[Alberts et~al.(2014)Alberts, Khanin, and Quastel]{albertsetal14b}
Tom Alberts, Konstantin Khanin, and Jeremy Quastel.
\newblock The intermediate disorder regime for directed polymers in dimension
  $1+1$.
\newblock \emph{Annals of Probability}, 42\penalty0 (3):\penalty0 1212--1256,
  2014.

\bibitem[Amir et~al.(2011)Amir, Corwin, and Quastel]{amiretal11}
Gideon Amir, Ivan Corwin, and Jeremy Quastel.
\newblock Probability distribution of the free energy of the continuum directed
  random polymer in 1+ 1 dimensions.
\newblock \emph{Communications on Pure and Applied Mathematics}, 64\penalty0
  (4):\penalty0 466--537, 2011.

\bibitem[Bakhtin and Li(2019)]{bakhtinli19}
Yuri Bakhtin and Liying Li.
\newblock Thermodynamic limit for directed polymers and stationary solutions of
  the {B}urgers equation.
\newblock \emph{Communications on {P}ure and {A}pplied {M}athematics},
  72\penalty0 (3):\penalty0 536--619, 2019.

\bibitem[Barab{\'a}si and Stanley(1995)]{barabasistanley95}
Albert-L{\'a}szl{\'o} Barab{\'a}si and Harry~Eugene Stanley.
\newblock \emph{Fractal concepts in surface growth}.
\newblock Cambridge University Press, 1995.

\bibitem[Barles and Souganidis(1991)]{barlessouganidis91}
Guy Barles and Panagiotis~E. Souganidis.
\newblock Convergence of approximation schemes for fully nonlinear second order
  equations.
\newblock \emph{Asymptotic Analysis}, 4\penalty0 (3):\penalty0 271--283, 1991.

\bibitem[Barraquand and Le~Doussal(2021)]{barraquandledoussal21}
Guillaume Barraquand and Pierre Le~Doussal.
\newblock Steady state of the {KPZ} equation on an interval and {L}iouville
  quantum mechanics.
\newblock \emph{arXiv preprint arXiv:2105.15178}, 2021.

\bibitem[Basu et~al.(2021)Basu, Ganguly, and Hammond]{basuetal21}
Riddhipratim Basu, Shirshendu Ganguly, and Alan Hammond.
\newblock Fractal geometry of {A}iry$_2$ processes coupled via the {A}iry
  sheet.
\newblock \emph{The Annals of Probability}, 49\penalty0 (1):\penalty0 485--505,
  2021.

\bibitem[Bertini and Giacomin(1997)]{bertinigiacomin97}
Lorenzo Bertini and Giambattista Giacomin.
\newblock Stochastic {Burgers} and {KPZ} equations from particle systems.
\newblock \emph{Communications in Mathematical Physics}, 183\penalty0
  (3):\penalty0 571--607, 1997.

\bibitem[Bierm{\'e} et~al.(2018)Bierm{\'e}, Durieu, and Wang]{biermeetal18}
Hermine Bierm{\'e}, Olivier Durieu, and Yizao Wang.
\newblock Generalized random fields and {L}{\'e}vy's continuity theorem on the
  space of tempered distributions.
\newblock \emph{Communications on Stochastic Analysis}, 12\penalty0
  (4):\penalty0 427--445, 2018.

\bibitem[Borodin and Corwin(2014)]{borodincorwin14}
Alexei Borodin and Ivan Corwin.
\newblock Macdonald processes.
\newblock \emph{Probability Theory and Related Fields}, 158\penalty0
  (1):\penalty0 225--400, 2014.

\bibitem[Borodin et~al.(2013)Borodin, Corwin, and Remenik]{borodinetal13}
Alexei Borodin, Ivan Corwin, and Daniel Remenik.
\newblock Log-gamma polymer free energy fluctuations via a fredholm determinant
  identity.
\newblock \emph{Communications in Mathematical Physics}, 324\penalty0
  (1):\penalty0 215--232, 2013.

\bibitem[Bryc and Kuznetsov(2021)]{bryckuznetsov21}
W{\l}odek Bryc and Alexey Kuznetsov.
\newblock Markov limits of steady states of the {KPZ} equation on an interval.
\newblock \emph{arXiv preprint arXiv:2109.04462}, 2021.

\bibitem[Bryc et~al.(2021)Bryc, Kuznetsov, Wang, and Weso{\l}owski]{brycetal21}
W{\l}odek Bryc, Alexey Kuznetsov, Yizao Wang, and Jacek Weso{\l}owski.
\newblock Markov processes related to the stationary measure for the open {KPZ}
  equation.
\newblock \emph{arXiv preprint arXiv:2105.03946}, 2021.

\bibitem[Canet et~al.(2010)Canet, Chat{\'e}, Delamotte, and
  Wschebor]{canetetal10}
L{\'e}onie Canet, Hugues Chat{\'e}, Bertrand Delamotte, and Nicol{\'a}s
  Wschebor.
\newblock Nonperturbative renormalization group for the
  {K}ardar--{P}arisi--{Z}hang equation.
\newblock \emph{Physical Review Letters}, 104\penalty0 (15):\penalty0 150601,
  2010.

\bibitem[Caravenna et~al.(2017)Caravenna, Sun, and Zygouras]{caravennaetal17}
Francesco Caravenna, Rongfeng Sun, and Nikos Zygouras.
\newblock Universality in marginally relevant disordered systems.
\newblock \emph{Annals of Applied Probability}, 27\penalty0 (5):\penalty0
  3050--3112, 2017.

\bibitem[Caravenna et~al.(2020)Caravenna, Sun, and Zygouras]{caravennaetal20}
Francesco Caravenna, Rongfeng Sun, and Nikos Zygouras.
\newblock The two-dimensional {KPZ} equation in the entire subcritical regime.
\newblock \emph{Annals of Probability}, 48\penalty0 (3):\penalty0 1086--1127,
  2020.

\bibitem[Caravenna et~al.(2021)Caravenna, Sun, and Zygouras]{caravennaetal21}
Francesco Caravenna, Rongfeng Sun, and Nikos Zygouras.
\newblock The critical 2d stochastic heat flow.
\newblock \emph{arXiv preprint arXiv:2109.03766}, 2021.

\bibitem[Chatterjee(2021{\natexlab{a}})]{chatterjee21}
Sourav Chatterjee.
\newblock Universality of deterministic {KPZ}.
\newblock \emph{arXiv preprint arXiv:2102.13131}, 2021{\natexlab{a}}.

\bibitem[Chatterjee(2021{\natexlab{b}})]{chatterjee21b}
Sourav Chatterjee.
\newblock Superconcentration in surface growth.
\newblock \emph{arXiv preprint arXiv:2103.09199}, 2021{\natexlab{b}}.

\bibitem[Chatterjee(2021{\natexlab{c}})]{chatterjee21b1}
Sourav Chatterjee.
\newblock Weak convergence of directed polymers to deterministic {KPZ} at high
  temperature.
\newblock \emph{arXiv preprint arXiv:2105.05933}, 2021{\natexlab{c}}.

\bibitem[Chatterjee and Dunlap(2020)]{chatterjeedunlap20}
Sourav Chatterjee and Alexander Dunlap.
\newblock Constructing a solution of the $(2+1)$-dimensional {KPZ} equation.
\newblock \emph{Annals of Probability}, 48\penalty0 (2):\penalty0 1014--1055,
  2020.

\bibitem[Chatterjee and Souganidis(2021)]{chatterjeesouganidis21}
Sourav Chatterjee and Panagiotis~E. Souganidis.
\newblock Convergence of deterministic growth models.
\newblock \emph{arXiv preprint arXiv:2108.00538}, 2021.

\bibitem[Comets(2017)]{comets17}
Francis Comets.
\newblock \emph{Directed polymers in random environments}.
\newblock Lecture notes from the 46th Probability Summer School held in
  Saint-Flour, 2016. Springer, Cham, 2017.

\bibitem[Comets et~al.(2019)Comets, Cosco, and Mukherjee]{cometsetal19}
Francis Comets, Cl\'ement Cosco, and Chiranjib Mukherjee.
\newblock Space-time fluctuation of the {Kardar--Parisi--Zhang} equation in $d
  \geq 3$ and the {Gaussian} free field.
\newblock \emph{arXiv preprint arXiv:1905.03200}, 2019.

\bibitem[Comets et~al.(2020)Comets, Cosco, and Mukherjee]{cometsetal20}
Francis Comets, Cl\'ement Cosco, and Chiranjib Mukherjee.
\newblock Renormalizing the {Kardar--Parisi--Zhang} equation in $d\geq 3$ in
  weak disorder.
\newblock \emph{Journal of Statistical Physics}, 179\penalty0 (3):\penalty0
  713--728, 2020.

\bibitem[Corwin(2016)]{corwin16}
Ivan Corwin.
\newblock {K}ardar--{P}arisi--{Z}hang universality.
\newblock \emph{Notices of the AMS}, 63\penalty0 (3):\penalty0 230--239, 2016.

\bibitem[Corwin and Hammond(2016)]{corwinhammond16}
Ivan Corwin and Alan Hammond.
\newblock {KPZ} line ensemble.
\newblock \emph{Probability Theory and Related Fields}, 166\penalty0
  (1):\penalty0 67--185, 2016.

\bibitem[Corwin and Knizel(2021)]{corwinknizel21}
Ivan Corwin and Alisa Knizel.
\newblock Stationary measure for the open {KPZ} equation.
\newblock \emph{arXiv preprint arXiv:2103.12253}, 2021.

\bibitem[Cosco et~al.(2020)Cosco, Nakajima, and Nakashima]{coscoetal20}
Cl{\'e}ment Cosco, Shuta Nakajima, and Makoto Nakashima.
\newblock Law of large numbers and fluctuations in the sub-critical and {$L^2$}
  regions for {SHE} and {KPZ} equation in dimension $d\geq 3$.
\newblock \emph{arXiv preprint arXiv:2005.12689}, 2020.

\bibitem[Das and Ghosal(2021)]{dasghosal21}
Sayan Das and Promit Ghosal.
\newblock Law of iterated logarithms and fractal properties of the {KPZ}
  equation.
\newblock \emph{arXiv preprint arXiv:2101.00730}, 2021.

\bibitem[Dauvergne et~al.(2020)Dauvergne, Sarkar, and
  Vir{\'a}g]{dauvergneetal20}
Duncan Dauvergne, Sourav Sarkar, and B{\'a}lint Vir{\'a}g.
\newblock Three-halves variation of geodesics in the directed landscape.
\newblock \emph{arXiv preprint arXiv:2010.12994}, 2020.

\bibitem[Dembo and Tsai(2016)]{dembotsai16}
Amir Dembo and Li-Cheng Tsai.
\newblock Weakly asymmetric non-simple exclusion process and the
  {K}ardar--{P}arisi--{Z}hang equation.
\newblock \emph{Communications in Mathematical Physics}, 341\penalty0
  (1):\penalty0 219--261, 2016.

\bibitem[Dotsenko(2010)]{dotsenko10}
Victor Dotsenko.
\newblock Bethe ansatz derivation of the {T}racy--{W}idom distribution for
  one-dimensional directed polymers.
\newblock \emph{EPL (Europhysics Letters)}, 90\penalty0 (2):\penalty0 20003,
  2010.

\bibitem[Dunlap(2020)]{dunlap20}
Alexander Dunlap.
\newblock Existence of stationary stochastic {B}urgers evolutions on
  $\mathbf{R}^2$ and $\mathbf{R}^3$.
\newblock \emph{Nonlinearity}, 33\penalty0 (12):\penalty0 6480, 2020.

\bibitem[Dunlap et~al.(2020)Dunlap, Gu, Ryzhik, and Zeitouni]{dunlapetal20}
Alexander Dunlap, Yu~Gu, Lenya Ryzhik, and Ofer Zeitouni.
\newblock Fluctuations of the solutions to the {KPZ} equation in dimensions
  three and higher.
\newblock \emph{Probability Theory and Related Fields}, 176\penalty0
  (3):\penalty0 1217--1258, 2020.

\bibitem[Dunlap et~al.(2021)Dunlap, Graham, and Ryzhik]{dunlapetal21}
Alexander Dunlap, Cole Graham, and Lenya Ryzhik.
\newblock Stationary solutions to the stochastic {B}urgers equation on the
  line.
\newblock \emph{Communications in Mathematical Physics}, 382\penalty0
  (2):\penalty0 875--949, 2021.

\bibitem[Edwards and Wilkinson(1982)]{edwardswilkinson82}
Samuel~Frederick Edwards and D.~R. Wilkinson.
\newblock The surface statistics of a granular aggregate.
\newblock \emph{Proceedings of the Royal Society of London. A. Mathematical and
  Physical Sciences}, 381\penalty0 (1780):\penalty0 17--31, 1982.

\bibitem[Family and Vicsek(1991)]{familyvicsek91}
Fereydoon Family and Tam Vicsek.
\newblock \emph{Dynamics of fractal surfaces}.
\newblock World Scientific, 1991.

\bibitem[Fernique(1967)]{fernique67}
Xavier Fernique.
\newblock Processus lin{\'e}aires, processus g{\'e}n{\'e}ralis{\'e}s.
\newblock \emph{Annales de l'Institut Fourier}, 17\penalty0 (1):\penalty0
  1--92, 1967.

\bibitem[Fernique(1968)]{fernique68}
Xavier Fernique.
\newblock G{\'e}n{\'e}ralisations du th{\'e}oreme de continuit{\'e} de {P.}
  {L}\'evy.
\newblock \emph{C. R. Acad. Sci. Paris, S{\'e}r. A-B}, 266:\penalty0 A25--A28,
  1968.

\bibitem[Gabriel(2021)]{gabriel21}
Simon Gabriel.
\newblock Central limit theorems for the $(2+1)$-dimensional directed polymer
  in the weak disorder limit.
\newblock \emph{arXiv preprint arXiv:2104.07755}, 2021.

\bibitem[Ganguly and Gheissari(2021)]{gangulygheissari21}
Shirshendu Ganguly and Reza Gheissari.
\newblock Cutoff for the glauber dynamics of the lattice free field.
\newblock \emph{arXiv preprint arXiv:2108.07791}, 2021.

\bibitem[Gon{\c{c}}alves and Jara(2012)]{goncalvesjara12}
Patr{\'\i}cia Gon{\c{c}}alves and Milton Jara.
\newblock Crossover to the {KPZ} equation.
\newblock \emph{Annales Henri Poincar{\'e}}, 13\penalty0 (4):\penalty0
  813--826, 2012.

\bibitem[Gon{\c{c}}alves and Jara(2014)]{goncalvesjara14}
Patr{\'\i}cia Gon{\c{c}}alves and Milton Jara.
\newblock Nonlinear fluctuations of weakly asymmetric interacting particle
  systems.
\newblock \emph{Archive for Rational Mechanics and Analysis}, 212\penalty0
  (2):\penalty0 597--644, 2014.

\bibitem[Gon{\c{c}}alves et~al.(2015)Gon{\c{c}}alves, Jara, and
  Sethuraman]{goncalvesetal15}
Patr{\'\i}cia Gon{\c{c}}alves, Milton Jara, and Sunder Sethuraman.
\newblock A stochastic {Burgers} equation from a class of microscopic
  interactions.
\newblock \emph{Annals of Probability}, 43\penalty0 (1):\penalty0 286--338,
  2015.

\bibitem[Gu(2020)]{gu20}
Yu~Gu.
\newblock Gaussian fluctuations from the {2D KPZ} equation.
\newblock \emph{Stochastics and Partial Differential Equations: Analysis and
  Computations}, 8\penalty0 (1):\penalty0 150--185, 2020.

\bibitem[Gubinelli and Perkowski(2017)]{gubinelliperkowski17}
Massimiliano Gubinelli and Nicolas Perkowski.
\newblock Kpz reloaded.
\newblock \emph{Communications in Mathematical Physics}, 349\penalty0
  (1):\penalty0 165--269, 2017.

\bibitem[Gubinelli and Perkowski(2018{\natexlab{a}})]{gubinelliperkowski18}
Massimiliano Gubinelli and Nicolas Perkowski.
\newblock Energy solutions of {KPZ} are unique.
\newblock \emph{Journal of the American Mathematical Society}, 31\penalty0
  (2):\penalty0 427--471, 2018{\natexlab{a}}.

\bibitem[Gubinelli and Perkowski(2018{\natexlab{b}})]{gubinelliperkowski18b}
Massimiliano Gubinelli and Nicolas Perkowski.
\newblock The {H}airer--{Q}uastel universality result at stationarity.
\newblock In \emph{Stochastic analysis on large scale interacting systems},
  RIMS K\^{o}ky\^{u}roku Bessatsu, B59, pages 101--115. Res. Inst. Math. Sci.
  (RIMS), Kyoto., 2018{\natexlab{b}}.

\bibitem[Gubinelli et~al.(2015)Gubinelli, Imkeller, and
  Perkowski]{gubinellietal15}
Massimiliano Gubinelli, Peter Imkeller, and Nicolas Perkowski.
\newblock Paracontrolled distributions and singular {PDE}s.
\newblock \emph{Forum of Mathematics, Pi}, 3\penalty0 (e6):\penalty0 75pp.,
  2015.

\bibitem[Hairer(2013)]{hairer13}
Martin Hairer.
\newblock Solving the {KPZ} equation.
\newblock \emph{Annals of Mathematics (2)}, 178\penalty0 (2):\penalty0
  559--664, 2013.

\bibitem[Hairer(2014)]{hairer14}
Martin Hairer.
\newblock A theory of regularity structures.
\newblock \emph{Inventiones Mathematicae}, 198\penalty0 (2):\penalty0 269--504,
  2014.

\bibitem[Hairer and Quastel(2018)]{hairerquastel18}
Martin Hairer and Jeremy Quastel.
\newblock A class of growth models rescaling to {KPZ}.
\newblock \emph{Forum of Mathematics, Pi}, 6:\penalty0 e3, 112 pp., 2018.

\bibitem[Hairer and Xu(2019)]{hairerxu19}
Martin Hairer and Weijun Xu.
\newblock Large scale limit of interface fluctuation models.
\newblock \emph{The Annals of Probability}, 47\penalty0 (6):\penalty0
  3478--3550, 2019.

\bibitem[Halpin-Healy(2012)]{halpin-healy12}
Timothy Halpin-Healy.
\newblock $(2+ 1)$-dimensional directed polymer in a random medium: {S}caling
  phenomena and universal distributions.
\newblock \emph{Physical Review Letters}, 109\penalty0 (17):\penalty0 170602,
  2012.

\bibitem[Janson(1997)]{janson97}
Svante Janson.
\newblock \emph{Gaussian Hilbert Spaces}.
\newblock Cambridge University Press, 1997.

\bibitem[Kardar et~al.(1986)Kardar, Parisi, and Zhang]{kardaretal86}
Mehran Kardar, Giorgio Parisi, and Yi-Cheng Zhang.
\newblock Dynamic scaling of growing interfaces.
\newblock \emph{Physical Review Letters}, 56\penalty0 (9):\penalty0 889--892,
  1986.

\bibitem[Kelling and {\'O}dor(2011)]{kellingodor11}
Jeffrey Kelling and G{\'e}za {\'O}dor.
\newblock Extremely large-scale simulation of a {K}ardar--{P}arisi--{Z}hang
  model using graphics cards.
\newblock \emph{Physical Review E}, 84\penalty0 (6):\penalty0 061150, 2011.

\bibitem[Kupiainen and Marcozzi(2017)]{kupiainenmarcozzi17}
Antti Kupiainen and Matteo Marcozzi.
\newblock Renormalization of generalized {KPZ} equation.
\newblock \emph{Journal of Statistical Physics}, 166\penalty0 (3-4):\penalty0
  876--902, 2017.

\bibitem[Lygkonis and Zygouras(2022)]{lygkoniszygouras22}
Dimitris Lygkonis and Nikos Zygouras.
\newblock Edwards--wilkinson fluctuations for the directed polymer in the full
  ${L}^2$-regime for dimensions $d \geq 3$.
\newblock \emph{Ann. Inst. Henri Poincar\'e Probab. Stat.}, 58\penalty0
  (1):\penalty0 65--104, 2022.

\bibitem[Magnen and Unterberger(2018)]{magnenunterberger18}
Jacques Magnen and J{\'e}r{\'e}mie Unterberger.
\newblock The scaling limit of the {KPZ} equation in space dimension 3 and
  higher.
\newblock \emph{Journal of Statistical Physics}, 171\penalty0 (4):\penalty0
  543--598, 2018.

\bibitem[Mukherjee et~al.(2016)Mukherjee, Shamov, and
  Zeitouni]{mukherjeeetal16}
Chiranjib Mukherjee, Alexander Shamov, and Ofer Zeitouni.
\newblock Weak and strong disorder for the stochastic heat equation and
  continuous directed polymers in $d\geq 3$.
\newblock \emph{Electronic Communications in Probability}, 21:\penalty0 12 pp.,
  2016.

\bibitem[Pimentel(2018)]{pimentel18}
Leandro P.~R. Pimentel.
\newblock Local behaviour of {A}iry processes.
\newblock \emph{Journal of Statistical Physics}, 173\penalty0 (6):\penalty0
  1614--1638, 2018.

\bibitem[Pimentel(2021{\natexlab{a}})]{pimentel21a}
Leandro P.~R. Pimentel.
\newblock Brownian aspects of the {KPZ} fixed point.
\newblock In \emph{In and Out of Equilibrium 3: Celebrating Vladas
  Sidoravicius}, pages 711--739. Springer, 2021{\natexlab{a}}.

\bibitem[Pimentel(2021{\natexlab{b}})]{pimentel21b}
Leandro P.~R. Pimentel.
\newblock Ergodicity of the {KPZ} fixed point.
\newblock \emph{ALEA. Latin American Journal of Probability and Mathematical
  Statistics}, 18\penalty0 (1):\penalty0 963--983, 2021{\natexlab{b}}.

\bibitem[Pr{\"a}hofer and Spohn(2002)]{prahoferspohn02}
Michael Pr{\"a}hofer and Herbert Spohn.
\newblock Scale invariance of the png droplet and the airy process.
\newblock \emph{Journal of Statistical Physics}, 108\penalty0 (5):\penalty0
  1071--1106, 2002.

\bibitem[Quastel(2012)]{quastel12}
Jeremy Quastel.
\newblock Introduction to {KPZ}.
\newblock In \emph{Current Developments in Mathematics, 2011}, pages 125--194.
  International Press, Somerville, MA, 2012.

\bibitem[Quastel and Spohn(2015)]{quastelspohn15}
Jeremy Quastel and Herbert Spohn.
\newblock The one-dimensional {KPZ} equation and its universality class.
\newblock \emph{Journal of Statistical Physics}, 160\penalty0 (4):\penalty0
  965--984, 2015.

\bibitem[Rodrigues et~al.(2014)Rodrigues, Mello, and Oliveira]{rodriguesetal14}
Evandro~A. Rodrigues, Bernardo~A. Mello, and Fernando~A. Oliveira.
\newblock Growth exponents of the etching model in high dimensions.
\newblock \emph{Journal of Physics A: Mathematical and Theoretical},
  48\penalty0 (3):\penalty0 035001, 2014.

\bibitem[Sarkar and Vir{\'a}g(2021)]{sarkarvirag21}
Sourav Sarkar and B{\'a}lint Vir{\'a}g.
\newblock Brownian absolute continuity of the {KPZ} fixed point with arbitrary
  initial condition.
\newblock \emph{The Annals of Probability}, 49\penalty0 (4):\penalty0
  1718--1737, 2021.

\bibitem[Sasamoto and Spohn(2010)]{sasamotospohn10}
Tomohiro Sasamoto and Herbert Spohn.
\newblock The crossover regime for the weakly asymmetric simple exclusion
  process.
\newblock \emph{Journal of Statistical Physics}, 140\penalty0 (2):\penalty0
  209--231, 2010.

\bibitem[Yang(2020{\natexlab{a}})]{yang20}
Kevin Yang.
\newblock {K}ardar--{P}arisi--{Z}hang {E}quation from {N}on-{S}imple
  {V}ariations on {O}pen-{ASEP}.
\newblock \emph{arXiv preprint arXiv:2010.11207}, 2020{\natexlab{a}}.

\bibitem[Yang(2020{\natexlab{b}})]{yang20b}
Kevin Yang.
\newblock {H}airer--{Q}uastel universality in non-stationarity via energy
  solution theory.
\newblock \emph{arXiv preprint arXiv:2011.00012}, 2020{\natexlab{b}}.

\bibitem[Yang(2020{\natexlab{c}})]{yang20c}
Kevin Yang.
\newblock {K}ardar--{P}arisi--{Z}hang equation from long-range exclusion
  processes.
\newblock \emph{arXiv preprint arXiv:2002.05176}, 2020{\natexlab{c}}.

\bibitem[Yang(2021)]{yang21}
Kevin Yang.
\newblock Non-stationary {KPZ} equation from {ASEP} with slow bonds.
\newblock \emph{arXiv preprint arXiv:2102.02213}, 2021.

\end{thebibliography}

\end{document}